\providecommand{\U}[1]{\protect\rule{.1in}{.1in}}
\theoremstyle{plain}
\newtheorem{corollary}{Corollary}
\newtheorem{lemma}{Lemma}
\newtheorem{proposition}{Proposition}
\newtheorem{remark}{Remark}
\newtheorem{theorem}{Theorem}
\numberwithin{equation}{section}
\DeclareMathOperator{\dist}{dist} \DeclareMathOperator{\diam}{diam}
\begin{document}
\title[Doubling inequality and nodal sets ]
{Doubling inequality and nodal sets for solutions of bi-Laplace equations}
\author{ Jiuyi Zhu}
\address{
Department of Mathematics\\
Louisiana State University\\
Baton Rouge, LA 70803, USA\\
Email:  zhu@math.lsu.edu }
\thanks{Zhu is supported in part by  NSF grant DMS-1656845 and OIA-1832961}
\date{}
\subjclass[2010]{35J05, 35J30, 58J50, 35J05.} \keywords {Nodal sets, Doubling inequality, Carleman
estimates, bi-Laplace equations}

\begin{abstract}
 We investigate the doubling inequality and nodal sets for the solutions of bi-Laplace equations. A polynomial upper bound for the nodal sets of solutions and their gradient is obtained based on the recent development of nodal sets for Laplace eigenfunctions by Logunov. In addition, we derive an implicit upper bound for the nodal sets of solutions. We show two types of doubling inequalities for the solutions of bi-Laplace equations. As a consequence, the rate of vanishing is given for the solutions.
\end{abstract}

\maketitle
\section{Introduction}

In this paper, we consider the doubling inequality and nodal sets for the solutions of bi-Laplace equations
\begin{equation}
\triangle^2 u=W(x)u \quad \mbox{in} \ \mathcal{M},
\label{bi-Laplace}
\end{equation}
where $\mathcal{M}$ is a compact and smooth Riemannian manifold with dimensions $n\geq 2$.
Assume that $\|W\|_{L^\infty}\leq M$ for some large constant $M$. The nodal sets are the zero level sets of solutions. For the eigenfunctions of Laplace
\begin{equation}
\triangle \phi_\lambda+\lambda \phi_\lambda=0
\end{equation}
on a compact smooth Riemannian manifold $\mathcal{M}$, Yau \cite{Y} conjectured that the Hausdorff measure of nodal sets satisfies
$$ c\sqrt{\lambda}\leq H^{n-1}(x\in\mathcal{M}|\phi_\lambda=0)\leq C\sqrt{\lambda},   $$
where $c$, $C$ depend on the manifold $\mathcal{M}$. The conjecture was solved in real analytic manifolds in the seminal paper by Donnelly-Fefferman \cite{DF}.  Lin \cite{Lin} provided a simpler proof for the upper
bound for general second order elliptic equations on the analytic manifolds.
For the smooth manifolds, some progresses were made towards the upper bound of nodal sets.
 On smooth surfaces, Donnelly-Fefferman \cite{DF1} showed that  $H^{1}(\{\phi_\lambda=0\})\leq C \lambda^{\frac{3}{4}} $ by using Carleman estimates and Calder\'on and Zygmund type decomposition. A different proof based on frequency functions was given by Dong \cite{D}. Recently, Logunov and
Malinnikova \cite{LM} were able to refine the upper bound to be $C \lambda^{\frac{3}{4}-\epsilon}$.  For higher dimensions $n\geq 3$, the exponential upper bound
$H^{n-1}(\{\phi_\lambda=0\})\leq C \lambda^{C\sqrt{\lambda}}$ was obtained by Hardt and Simon \cite{HS}. Very recently,  an important improvement was given by Logunov in \cite{Lo} who obtained a polynomial upper bound
$$H^{n-1}(x\in\mathcal{M}| \phi_\lambda=0)\leq C \lambda^{\alpha},$$
where $\alpha>\frac{1}{2}$ depends only on the dimension. In \cite{Lo},  Logunov further studied the frequency function of harmonic functions and developed a new combinatorial argument to investigate the nodal sets.

For the lower bound, Logunov \cite{Lo1} answered Yau's conjecture and obtained the sharp lower bound for smooth manifolds. This breakthrough improved a polynomial lower bound obtained early by Colding and  Minicozzi \cite{CM}, Sogge and Zelditch \cite{SZ}. See also the same polynomial lower bound by different methods, e.g. \cite{HSo}, \cite{M}, \cite{S}.

The upper bound of nodal sets was studied for general second order elliptic equations  in \cite{Lin}, \cite{HS}, \cite{HL1}, \cite{GR}, etc.   The Hausdorff dimension of nodal sets and singular sets for the solutions of higher order elliptic equations was studied by Han \cite{Han}. It was shown in \cite{Han} that the Hausdorff dimension of nodal sets $\{u=0\}$ and the mixed nodal sets $\{u=\triangle u=0\}$ is not greater than $n-1$, and the Hausdorff dimension of the singular sets $\{D^\nu u=0 \ \mbox{for all} \ |\nu|<4\}$ is not greater than $n-2$. Especially, the Hausdorff measure of singular sets was studied by Han, Hardt and Lin in \cite{HHL}. An implicit upper bound for the measure of singular sets in term of the doubling index was given. The optimal upper bound of nodal sets for higher order elliptic equations was obtained by Kukavica \cite{Ku} in real analytic domains. Complex analysis techniques were used for real analytic setting, which differ much from the tools in the paper.
 For the bi-Laplace equations on smooth manifolds, we want to know how the upper bound of the nodal sets depends on the potential functions appeared in the equations (\ref{bi-Laplace}). We are able to show the following result.
\begin{theorem}
Let $u$ be the solutions of bi-Laplace equations (\ref{bi-Laplace}) with $n\geq 3$. There exists a positive constant $C$ that depends only on the manifold $\mathcal{M}$ such that
$$ H^{n-1}(x\in\mathcal{M}|u=\triangle u=0)\leq CM^{\alpha}, $$
where $\alpha>\frac{1}{2}$ depends only on the dimension $n$.
\label{th2}
\end{theorem}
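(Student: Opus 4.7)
The approach is to reduce the fourth-order equation \eqref{bi-Laplace} to a second-order Schr\"odinger-type system and then apply Logunov's polynomial upper bound on nodal sets. Setting $v:=\triangle u$, equation \eqref{bi-Laplace} is equivalent to the coupled system
\[
\triangle \Phi = V(x)\,\Phi, \qquad \Phi = \begin{pmatrix} u \\ v \end{pmatrix}, \qquad V(x) = \begin{pmatrix} 0 & 1 \\ W(x) & 0 \end{pmatrix},
\]
with matrix-valued potential whose operator norm is bounded by $\|V(x)\|\le \max(|W(x)|,1)\le CM$ (we may assume $M\ge 1$ without loss of generality). The key observation is that the mixed nodal set in the theorem coincides with the common zero set of this vector-valued solution:
\[
\{x\in\mathcal{M}:u(x)=\triangle u(x)=0\}=\{x\in\mathcal{M}:\Phi(x)=0\},
\]
so the task reduces to bounding the $(n-1)$-dimensional Hausdorff measure of the zero set of a vector-valued solution of an elliptic system.

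The first step is to establish a doubling inequality on $\mathcal{M}$,
\[
\|\Phi\|_{L^2(B_{2r}(x_0))}\le e^{C\sqrt{M}}\,\|\Phi\|_{L^2(B_{r}(x_0))},
\]
for every $x_0\in\mathcal{M}$ and every sufficiently small $r$. This doubling inequality---one of the main outputs announced in the abstract---is obtained from Carleman estimates, either for the biharmonic operator $\triangle^2$ directly or, equivalently, for the second-order system above; both must be arranged to yield the exponent $\sqrt{M}$, matching the scalar rule of thumb ``doubling index $\asymp\sqrt{\|V\|_{L^\infty}}$'' for Schr\"odinger-type equations. The passage from Carleman to doubling is by now standard and proceeds by iterating across dyadic annuli with a suitable radial weight.

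The second step applies Logunov's combinatorial nodal-set argument from \cite{Lo}. Although that argument is phrased for harmonic functions (via a one-variable lift from Laplace eigenfunctions), its engine depends only on (i) a quantitative doubling inequality and (ii) subadditivity/almost-monotonicity of doubling indices on dyadic subcubes. Both properties carry over to vector-valued solutions of $\triangle\Phi=V\Phi$ provided one works throughout with the combined quantity $|\Phi|=(u^2+v^2)^{1/2}$. Writing $N\asymp\sqrt{M}$ for the resulting doubling index, this yields
\[
H^{n-1}\bigl(\{\Phi=0\}\cap B_1\bigr)\le C N^{2\alpha},
\]
where $\alpha>\tfrac12$ depends only on $n$. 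Substituting $N\asymp\sqrt{M}$ and summing over a finite cover of $\mathcal{M}$ by small balls delivers the stated bound $CM^{\alpha}$.

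The principal technical obstacle is transporting Logunov's combinatorial machinery from scalar harmonic functions to the vector-valued system $\triangle\Phi=V\Phi$. Since $u$ and $v$ may vanish to different orders, each inductive step must be performed on $|\Phi|$ rather than on either component alone, and the crucial hyperplane/cube-subdivision inequality for doubling indices must be reverified for the joint $L^2$-norm $\|\Phi\|_{L^2}$; the doubling inequality from the first step is precisely what makes this verification go through. A secondary technical point is that any direct Carleman estimate for $\triangle^2$ must be chosen sharply enough to produce the exponent $\sqrt{M}$, not the weaker $M^{2/3}$ that a naive $\tau^3$-Carleman would yield; working with the second-order system bypasses this concern and is ultimately what forces the reduction above.
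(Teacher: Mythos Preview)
Your overall strategy---reducing to the system $\triangle\Phi=V\Phi$ with $\Phi=(u,\triangle u)$ and feeding a doubling inequality into Logunov's combinatorial argument---is the same as the paper's. However, two concrete points are wrong.

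First, the doubling exponent is incorrect, and your reasoning about it is backwards. For a general $L^\infty$ matrix potential $V$, the Carleman estimate for the second-order system requires $\tau\gtrsim\|V\|_\infty^{2/3}$ (the $\tau^{3/2}$ gain in \eqref{vec} absorbs $\|V\|_\infty$ only when $\tau^{3/2}\ge C\|V\|_\infty$), so the doubling inequality for $\Phi$ reads $\|\Phi\|_{2r}\le e^{CM^{2/3}}\|\Phi\|_r$, not $e^{C\sqrt{M}}$; this is exactly Theorem~\ref{th1}. The $\sqrt{\lambda}$ exponent for Laplace eigenfunctions is special to constant potentials and does not extend to general $V\in L^\infty$---Meshkov's example shows $M^{2/3}$ is sharp for complex potentials, and improving it for real ones is Landis' conjecture. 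Contrary to your last paragraph, it is the \emph{direct} fourth-order Carleman for $\triangle^2$ (with its $\tau^3$ gain) that yields the better exponent $M^{1/3}$, and only for $u$ alone (Theorem~\ref{th3}); the second-order system gives the weaker $M^{2/3}$ for the pair. The paper must use the $M^{2/3}$ bound here because Logunov's machinery needs the doubling index of the full vector $\Phi$.

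Second, you omit the rescaling step. The almost-monotonicity of the frequency function (Proposition~\ref{monoto}) and the simplex/hyperplane lemmas feeding into Logunov's argument are proved with constants independent of the potential only after rescaling $x\mapsto x/M^{1/4}$ so that $\bar W=W/M$ satisfies $\|\bar W\|_\infty\le 1$. This forces the covering balls to have radius $\sim M^{-1/4}$, hence roughly $M^{n/4}$ of them; combined with the local bound $H^{n-1}(\{\Phi=0\}\cap Q)\le C\,\mathrm{diam}(Q)^{n-1}N^{\hat\alpha}$ from Proposition~\ref{pro2} and $N\le CM^{2/3}$, the paper obtains $H^{n-1}\le CM^{2\hat\alpha/3+1/4}$. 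Without the rescaling, the constants in the monotonicity and comparison lemmas pick up $M$-dependence and your ``sum over a fixed finite cover'' does not give the stated bound.
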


In all the aforementioned literature for the study of the upper bound of nodal sets of classical eigenfunctions, a crucial estimate is the following sharp quantitative doubling inequality established by Donnelly and Fefferman \cite{DF},
\begin{equation}
\|\phi_\lambda\|_{\mathbb B_{2r}(x)}\leq e^{ C\sqrt{\lambda}}\|\phi_\lambda\|_{\mathbb B_{r}(x)}
\label{doubleuse}
\end{equation}
for any $r>0$ and any $x\in \mathcal{M}$, where $\|\cdot\|_{\mathbb B_r(x_0)}$ denotes the $L^2$ norm on the ball $\mathbb B_r(x_0)$. Such optimal doubling inequalities provide the sharp upper bound for the frequency function and vanishing order for classical eigenfunctions. Roughly speaking, doubling inequalities retrieve global feature from local data. Those estimates are also widely used in inverse problems, control theorems, spectral theory, etc.

As the estimates (\ref{doubleuse}), in order to obtain  upper bound estimates of nodal sets by the norm of potential functions for the solutions of bi-Laplace equations (\ref{bi-Laplace}), we also need a quantitative doubling inequality, which provides the bounds for frequency function and rate of vanishing. We show the following doubling estimates for $u$ and $\triangle u$.

\begin{theorem}
Let $u$ be the solutions of bi-Laplace equations (\ref{bi-Laplace}). There exists a positive constant $C$ depending only on the manifold $\mathcal{M}$ such that
\begin{equation}
\|(u, \triangle u)\|_{\mathbb B_{2r}(x)}\leq e^{ CM^{\frac{2}{3}}}\|(u, \triangle u)\|_{\mathbb B_{r}(x)}
\label{LLL}
\end{equation}
for any $r>0$ and any $x\in \mathcal{M}$.
\label{th1}
\end{theorem}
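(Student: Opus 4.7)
The plan is to recast equation (\ref{bi-Laplace}) as the coupled second-order elliptic system $\triangle u=v$, $\triangle v=Wu$ with $\|W\|_{L^\infty}\le M$, and to obtain the claimed simultaneous doubling estimate for the pair $(u,\triangle u)=(u,v)$ by applying a standard second-order Carleman inequality to each equation and then coupling the two resulting bounds. The heuristic behind the exponent $M^{2/3}$ is that each application of the $L^2$ Carleman estimate for $\triangle$ gains a factor $\tau^3$ on its leading term, while the bi-Laplace coupling costs $M^2$ after squaring; hence $\tau\ge c\,M^{2/3}$ is exactly the threshold at which the coupling can be absorbed.

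The first step is to fix a radial weight $\phi(x)=\phi(|x|)$ comparable to $-\log|x|$, suitably log-convexified to secure strict pseudoconvexity, and to invoke the classical Carleman estimate
\begin{equation*}
\tau^3\int e^{2\tau\phi}\psi^2\,dx+\tau\int e^{2\tau\phi}|\nabla\psi|^2\,dx\le C\int e^{2\tau\phi}|\triangle\psi|^2\,dx
\end{equation*}
valid for $\psi\in C^\infty_c$ supported in a small punctured coordinate ball and $\tau\ge\tau_0$. Applying it to $\eta u$ and to $\eta v$, where $\eta$ is a smooth cutoff equal to $1$ on $\mathbb B_{r_2}$ and supported in $\mathbb B_{r_3}\setminus\mathbb B_{r_1/2}$, expanding the Laplacians, and using $\triangle u=v$, $\triangle v=Wu$, yields two inequalities whose interior pieces read schematically
\begin{equation*}
\tau^3 A\le C B+R_u,\qquad \tau^3 B\le C M^2 A+R_v,
\end{equation*}
with $A=\int e^{2\tau\phi}\eta^2u^2$, $B=\int e^{2\tau\phi}\eta^2v^2$, and where $R_u,R_v$ are cutoff/commutator terms supported on $\{r_1/2\le|x|\le r_1\}\cup\{r_2\le|x|\le r_3\}$ and involving $u,v,\nabla u,\nabla v$.

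Adding the two inequalities and fixing $\tau=C_0\,M^{2/3}$ makes the couplings on the right-hand sides absorbable, leaving $\tau^3(A+B)\le C(R_u+R_v)$. Factoring out $e^{2\tau\phi(r_2)}$ and using the monotonicity of $\phi$ in $|x|$ converts this into a three-ball inequality
\begin{equation*}
\|(u,v)\|_{\mathbb B_{r_2}}\le C\,\|(u,v)\|_{\mathbb B_{r_1}}^{\theta}\|(u,v)\|_{\mathbb B_{r_3}}^{1-\theta}
\end{equation*}
for some $\theta=\theta(r_1,r_2,r_3)\in(0,1)$ and overall constants of order $e^{C\tau}=e^{CM^{2/3}}$. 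A standard finite-chaining argument along a geodesic cover of $\mathcal{M}$ then iterates the three-ball estimate into the desired doubling bound (\ref{LLL}).

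The subtlest point, and the one I expect to be the main obstacle, is the treatment of the commutator terms $R_u,R_v$: they naturally contain $|\nabla u|^2$, $|\nabla v|^2$ and $|\triangle\eta|^2u^2$, and they must be converted back into $L^2$ norms of $(u,\triangle u)$ on slightly enlarged annuli via Caccioppoli-type interior estimates for the bi-Laplace system. One must verify that this conversion does not reintroduce powers of $M$ larger than those already carried by the Carleman parameter $\tau\sim M^{2/3}$; otherwise the optimization collapses and the exponent degrades. A parallel subtlety is the choice of weight $\phi$: it must be strictly log-convex enough to keep the coefficient of $\tau^3\psi^2$ uniformly positive after subtracting the lower-order perturbations, while not so distorted that the three-ball exponent $\theta$ degenerates when the radii $r_1,r_2,r_3$ are comparable.
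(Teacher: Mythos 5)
Your reduction to the system $\triangle u=v$, $\triangle v=Wu$, the separate application of the $\triangle$-Carleman estimate to $\eta u$ and $\eta v$, and the observation that absorbing the $M^2A$ coupling forces $\tau\gtrsim M^{2/3}$ are all in the spirit of the paper (which runs the same calculation in one stroke using the vector Carleman estimate of Lemma \ref{lemma1}, applied to $(u,v)$, with the matrix potential $V$ of norm $\sim M$). So up to the three-ball inequality you are essentially tracking the paper's route.

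The genuine gap is the last step: ``a standard finite-chaining argument iterates the three-ball estimate into the desired doubling bound.'' This does not work as stated. Chaining the three-ball inequality along geodesics produces a \emph{lower bound} $\|(u,v)\|_{\mathbb B_R(x_0)}\ge e^{-C_R M^{2/3}}\|(u,v)\|_{L^2(\mathcal M)}$ at one fixed scale $R$, and hence doubling at that scale; but the constant $C_R$ blows up as $R\to 0$ (more balls in the chain), so iterating the three-ball inequality cannot give $\|(u,v)\|_{\mathbb B_{2r}}\le e^{CM^{2/3}}\|(u,v)\|_{\mathbb B_r}$ \emph{uniformly for all} $r>0$, which is what Theorem \ref{th1} asserts. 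The paper instead applies the Carleman inequality once more with a cutoff supported on a \emph{two-scale} annulus $\{\delta<r<2R\}$ with $\delta$ arbitrarily small and $R$ fixed, and the lower bound at the fixed outer scale then controls the ratio $\|U\|_{3R}/\|U\|_{R/2,3R/2}$ by $e^{CM^{2/3}}$ \emph{independently of $\delta$}; this is how it closes the doubling inequality for all small $r$.

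Crucial to that two-scale argument is the extra term $\tau\delta\,\|r^{-1}e^{\tau\phi}F\|$ in the Carleman estimate (\ref{vec})--(\ref{Carle}). On the inner transition annulus $\{3\delta/2<r<4\delta\}$ one has $\tau\delta\,r^{-1}\gtrsim\tau$, uniformly in $\delta$, which is what lets one add $e^{\tau\phi(4\delta)}\|U\|_{3\delta/2}$ to both sides and extract $\|U\|_{4\delta}$ from the inner boundary layer. Your Carleman inequality $\tau^3\int e^{2\tau\phi}\psi^2+\tau\int e^{2\tau\phi}|\nabla\psi|^2\le C\int e^{2\tau\phi}|\triangle\psi|^2$ has no such $\delta$-weighted term; the coefficient you would obtain on the inner annulus degenerates like $\tau^{3/2}\delta^{\epsilon/2}$ as $\delta\to0$, so the optimization collapses exactly at the point you flag as the ``subtlest'' one. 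So the plan needs to be upgraded: either prove and use a Carleman estimate carrying the $\tau\delta\,\|r^{-1}e^{\tau\phi}\cdot\|$ term (as in \cite{Bak}), or replace the chaining claim with the paper's two-scale Carleman argument plus the propagation-of-smallness lower bound at fixed scale. Your Caccioppoli caveat is well-founded but is handled in the paper by the system Caccioppoli bound (\ref{Cacc}), whose $M^{1/2}$ factor is harmless once $\tau\gtrsim M^{2/3}$.
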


If we only consider bounded potential functions for bi-Laplace equations (\ref{bi-Laplace}), the power $CM^{\frac{2}{3}}$ in the exponential functions  in (\ref{LLL}) seems to be sharp so far. Such power $CM^{\frac{2}{3}}$ appeared in the topic of quantitative unique continuation, see e.g. \cite{BK}, \cite{K}, etc. Especially, the counterexample for the sharpness of $CM^{\frac{2}{3}}$ was constructed for complex-valued potentials in \cite{Mes}. For the real-valued bounded potentials, it is still open if the sharp power is $CM^{\frac{1}{2}}$ for $n\geq 3$, which is related to Landis' conjecture \cite{KSW}.

In showing Theorem \ref{th2}, we use the doubling inequality for $(u, \triangle u)$ in Theorem \ref{th1}. Using some different type of Carleman estimates for bi-Laplace, we are able to obtain a refined  doubling inequality for the solution $u$.
\begin{theorem}
Let $u$ be the solutions of bi-Laplace equations (\ref{bi-Laplace}). There exists a positive constant $C$ depending only on the manifold $\mathcal{M}$ such that
\begin{equation}
\|u\|_{\mathbb B_{2r}(x)}\leq e^{ CM^{\frac{1}{3}}}\|u\|_{\mathbb B_{r}(x)}
\label{lala}
\end{equation}
for any $r>0$ and any $x\in \mathcal{M}$.
\label{th3}
\end{theorem}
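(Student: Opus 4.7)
The plan is to replace the second-order Carleman estimate behind Theorem~\ref{th1}, which treats the pair $(u,\triangle u)$ as a vector-valued unknown satisfying $|\triangle(u,\triangle u)|\lesssim M|(u,\triangle u)|$, by a genuine fourth-order Carleman estimate for the biharmonic operator. The heuristic is that the standard Donnelly--Fefferman type gain $\tau^{3}\int|v|^{2}e^{2\tau\psi}\le C\int|\triangle v|^{2}e^{2\tau\psi}$, when iterated, produces
$$
\tau^{6}\int|u|^{2}e^{2\tau\psi}\,dx\le C\int|\triangle^{2}u|^{2}e^{2\tau\psi}\,dx,
$$
so that $|\triangle^{2}u|\le M|u|$ can be absorbed by choosing $\tau\asymp M^{1/3}$, as opposed to the $\tau\asymp M^{2/3}$ forced by a vector-valued application to $(u,\triangle u)$. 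This extra $\tau^{3}$ gain is precisely what trades the exponent $M^{2/3}$ for $M^{1/3}$.

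Concretely, the first step is to derive, in a geodesic chart around the point $x$, a radial Carleman estimate of the form
$$
\tau^{6}\int \phi^{-6}|v|^{2}e^{2\tau\psi}\,dx+\tau^{2}\int \phi^{-2}|\triangle v|^{2}e^{2\tau\psi}\,dx\le C\int|\triangle^{2}v|^{2}e^{2\tau\psi}\,dx
$$
for all $v\in C_{c}^{\infty}$ supported in a punctured ball, with $\phi$ a regularized distance to $x$ and $\psi$ a suitable convexification of $-\log \phi$. One way to obtain this is to fix a base second-order Carleman estimate of Donnelly--Fefferman/Kukavica type, apply it first to $v$ with right-hand side $\triangle v$ and then, after multiplying by $\tau^{3}$, to $\triangle v$ with right-hand side $\triangle^{2}v$, matching the weight-index shifts introduced by each iteration. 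The delicate point is to pick $\psi$ so that the weight is pseudoconvex at both levels simultaneously and so that the lower-order errors produced by conjugating $\triangle$ twice by $e^{\tau\psi}$ remain subordinate after iteration; a purely logarithmic weight is not sufficient for this.

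One then applies this estimate to $w=\eta u$, where $\eta$ is a smooth radial cut-off equal to one on $\mathbb B_{r_{2}}(x)\setminus\mathbb B_{r_{1}}(x)$ and supported in $\mathbb B_{r_{3}}(x)\setminus\mathbb B_{r_{0}}(x)$. Using $\triangle^{2}(\eta u)=\eta W u+[\triangle^{2},\eta]u$ together with $|W|\le M$, the right-hand side splits into $CM^{2}\int|\eta u|^{2}e^{2\tau\psi}$, absorbed on the left by taking $\tau=cM^{1/3}$ with $c$ large, plus a commutator contribution supported on the two shells where $\nabla\eta\neq 0$ and involving derivatives of $u$ up to order three. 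Those shell integrals are controlled by $L^{2}$-norms of $u$ on slightly enlarged shells through interior regularity for (\ref{bi-Laplace}), with Theorem~\ref{th1} dominating $\triangle u$ where needed, at a polynomial-in-$M$ cost that is absorbed into $e^{CM^{1/3}}$. Since $\psi$ is monotonically decreasing in $|x|$, the inner and outer shells contribute weights $e^{2\tau\psi(r_{1})}$ and $e^{2\tau\psi(r_{3})}$ respectively, while the left-hand side controls $\tau^{6}\|u\|_{\mathbb B_{r_{2}}(x)}^{2}e^{2\tau\psi(r_{2})}$; optimizing over $\tau\ge cM^{1/3}$ in a standard Hadamard-type three-ball argument yields
$$
\|u\|_{\mathbb B_{r_{2}}(x)}\le e^{CM^{1/3}}\|u\|_{\mathbb B_{r_{1}}(x)}^{\theta}\|u\|_{\mathbb B_{r_{3}}(x)}^{1-\theta}
$$
for some $\theta\in(0,1)$ depending only on the ratios $r_{i}$ and on $\mathcal M$. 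Chaining these three-ball inequalities along geodesic balls covering $\mathcal M$, exactly as in the Donnelly--Fefferman derivation of (\ref{doubleuse}), upgrades the three-ball estimate to the doubling inequality (\ref{lala}). The main technical obstacle is the fourth-order Carleman estimate with the full $\tau^{6}$ gain; once that is in hand, the downstream three-ball and chaining steps follow standard lines.
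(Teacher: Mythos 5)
Your approach is essentially the same as the paper's. The fourth-order Carleman estimate with the $\tau^{3}$ gain (on the $L^{2}$-norm level, i.e.\ $\tau^{6}$ on the squared level) is obtained in the paper precisely by iterating the second-order H\"ormander-type Carleman estimate twice, with the weight-index shifts handled via the convexified logarithmic weight $\phi=-\log r + r^{\epsilon}$ exactly as you anticipate; the threshold $\tau\gtrsim M^{1/3}$ then comes from absorbing $M\| r^{4}e^{\tau\phi}f\|$ into $\tau^{3}\| r^{\epsilon}e^{\tau\phi}f\|$. The subsequent three-ball inequality, the chaining argument for a global lower bound on $\|u\|$, and the final two-shell cutoff step likewise mirror the paper's proof.

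There is, however, one step that would fail if taken literally. You propose to control the shell commutator terms $[\triangle^{2},\eta]u$ ``with Theorem~\ref{th1} dominating $\triangle u$ where needed, at a polynomial-in-$M$ cost.'' Theorem~\ref{th1} is not a polynomial-in-$M$ tool: it costs a factor $e^{CM^{2/3}}$, which would overwhelm the $e^{CM^{1/3}}$ bound you are trying to prove and would collapse the argument back to the weaker exponent. In fact no doubling-type control on $\triangle u$ is needed here. The paper instead uses a Caccioppoli-type interior estimate for the fourth-order equation (inequality (\ref{hihcac}), due to \cite{Zhu}), which bounds $\sum_{|\alpha|\le 3}\| r^{|\alpha|}\nabla^{\alpha} u\|$ on an intermediate annulus by $C(1+\|W\|_{\infty})^{3}\|u\|$ on a slightly larger annulus. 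That gives a genuine polynomial factor $M^{3}$, which is absorbed into $e^{CM^{1/3}}$. Replacing the appeal to Theorem~\ref{th1} with this direct fourth-order Caccioppoli estimate closes the argument.
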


Such type doubling inequality (\ref{lala}) without explicit dependence of potential functions was assumed by Han, Hardt and Lin in \cite{HHL} to obtain upper bounds of the measure of singular sets. Theorem \ref{th3} not only verifies that the doubling inequality holds for the solutions of bi-Laplace equations, but also provides the explicit estimates for such inequality.
As a consequence of Theorem \ref{th3}, we obtain an upper bound for the vanishing order of solutions in (\ref{bi-Laplace}). For smooth functions, vanishing order of solutions at some point is defined as the number of the highest order non-zero derivative such that all lower derivatives vanish at the point.
\begin{corollary}
Let $u$ be the solutions of bi-Laplace equations (\ref{bi-Laplace}). Then the vanishing order of solution $u$ is everywhere less than
$CM^{\frac{1}{3}}$, where $C$ depends only the manifold $\mathcal{M}$.
\label{cor1}
\end{corollary}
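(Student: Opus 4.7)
The plan is to derive the vanishing order bound directly from the doubling inequality of Theorem \ref{th3} by a standard iteration/Taylor-expansion argument. Fix $x_0\in \mathcal M$ and assume, without loss of generality after rescaling, that there exists $R_0>0$ (depending only on $\mathcal M$) such that $\|u\|_{\mathbb B_{R_0}(x_0)}>0$; otherwise unique continuation (which follows from Theorem \ref{th1}) forces $u\equiv 0$, and the statement is vacuous. For any $0<r<R_0$, iterating inequality (\ref{lala}) roughly $\log_2(R_0/r)$ times yields
\begin{equation*}
\|u\|_{\mathbb B_{R_0}(x_0)}\leq e^{CM^{\frac{1}{3}}\log_2(R_0/r)}\|u\|_{\mathbb B_{r}(x_0)},
\end{equation*}
which rearranges to the lower bound
\begin{equation*}
\|u\|_{\mathbb B_{r}(x_0)}\geq \left(\frac{r}{R_0}\right)^{CM^{\frac{1}{3}}}\|u\|_{\mathbb B_{R_0}(x_0)}.
\end{equation*}

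For the matching upper bound, suppose the vanishing order of $u$ at $x_0$ equals $k$, so that $D^\nu u(x_0)=0$ for all multi-indices $|\nu|<k$. Since $\Delta^2 u = Wu$ with $\|W\|_{L^\infty}\leq M$ and $\mathcal M$ is smooth, elliptic regularity guarantees that $u$ is smooth in a neighborhood of $x_0$, so a Taylor expansion with remainder produces a constant $A=A(u,x_0,k)$ such that $|u(x)|\leq A|x-x_0|^{k}$ throughout $\mathbb B_{R_0/2}(x_0)$. Integrating over $\mathbb B_r(x_0)$ gives
\begin{equation*}
\|u\|_{\mathbb B_{r}(x_0)}\leq A\,\omega_n^{1/2}\, r^{k+\frac{n}{2}}
\end{equation*}
for every $r\leq R_0/2$.

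Combining the two bounds and taking logarithms,
\begin{equation*}
\left(k+\tfrac{n}{2}\right)\log r + \log(A\omega_n^{1/2})\geq CM^{\frac{1}{3}}\log(r/R_0)+\log\|u\|_{\mathbb B_{R_0}(x_0)}.
\end{equation*}
Dividing by $\log r$ and letting $r\to 0^+$ (so $\log r\to -\infty$ is negative and dominates the bounded terms), we obtain $k+\tfrac{n}{2}\leq CM^{\frac{1}{3}}$, and hence $k\leq CM^{\frac{1}{3}}$ after absorbing the dimensional constant into $C$. The main subtlety is purely cosmetic: one must make sure the iteration of (\ref{lala}) is legitimate down to arbitrarily small scales (which it is, since the constant in Theorem \ref{th3} is independent of $r$), and that the $x_0$-dependent constant $A$ does not interfere, which it does not because $\log A$ is swamped by $k|\log r|$ as $r\to 0$. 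No new ingredients beyond Theorem \ref{th3}, interior smoothness, and elementary Taylor expansion are required.
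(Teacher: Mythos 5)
Your proof is correct in substance and reaches the right conclusion, but it takes a modestly different path from the paper's. The paper first upgrades the $L^2$ doubling inequality (\ref{lala}) to an $L^\infty$ doubling inequality via elliptic estimates, normalizes $\|u\|_{L^\infty(\mathcal{M})}=1$, and walks a geodesic chain from $x_0$ to the point $y_0$ where the global supremum is attained, iterating along the chain ($m\leq C\log_2(\diam\mathcal{M}/r)$ steps) to obtain $\|u\|_{L^\infty(\mathbb B_r(x_0))}\geq Cr^{CM^{1/3}}$ with a constant uniform in $x_0$; the final vanishing-order comparison is then left implicit. You instead iterate (\ref{lala}) directly at $x_0$ in $L^2$ and normalize by the local quantity $\|u\|_{\mathbb B_{R_0}(x_0)}$, which suffices for the pointwise vanishing-order statement though it does not give a constant uniform in $x_0$; your version also writes out the Taylor comparison explicitly. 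Your route is somewhat more self-contained --- it avoids both the $L^\infty$ conversion and the chain-to-global-max argument --- at the small price of invoking unique continuation to rule out $\|u\|_{\mathbb B_{R_0}(x_0)}=0$. One caveat worth flagging: since $W$ is only assumed to be in $L^\infty$, elliptic regularity gives $u\in C^{3,\alpha}_{\mathrm{loc}}$, not $C^\infty$, so your appeal to ``$u$ is smooth'' and to a Taylor expansion of arbitrary order $k$ is an overstatement as written. This is harmless and easily repaired by defining the vanishing order via the decay rate $|u(x)|=O(|x-x_0|^k)$ near $x_0$ (which is what the paper's own argument tacitly uses when it passes from the $L^\infty$ lower bound to the vanishing-order conclusion), after which your estimate $\|u\|_{L^2(\mathbb B_r(x_0))}=O(r^{k+n/2})$ follows by direct integration with no smoothness required.
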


Our initial goal of the project is to study the upper bounds of the measure for the nodal sets $\{u=0\}$ of solutions for bi-Laplace equations. The desirable doubling inequality (\ref{lala}) is shown. However, the frequency function for bi-Laplace equations has to have $\triangle u$ involved to get an almost monotonicity result and a comparison lemma of doubling index, see section 2 for those results. These cause our upper bound estimates are on nodal sets $\{u=\triangle u=0\}$ in Theorem \ref{th2}. Inspired by the arguments for showing the finite bound of singular sets for Laplace equations in \cite{HHL} and \cite{HHL1}, we are able to derive the following  bounds for the solutions of nodal sets of bi-Laplace equations.
\begin{theorem}
Let $u$ be the solutions of bi-Laplace equations (\ref{bi-Laplace}). There exists a positive constant $C(M)$ depending only on $M$ and the manifold $\mathcal{M}$ such that
\begin{align*}
H^{n-1}(x\in\mathcal{M}| u=0)\leq C(M).
\end{align*}
\label{th4}
\end{theorem}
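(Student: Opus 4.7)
The plan is to combine the doubling inequality of Theorem~\ref{th3} with a compactness/blow-up argument in the spirit of Hardt-Simon \cite{HS} and Han-Hardt-Lin \cite{HHL}, \cite{HHL1}, whose arguments for Laplace solutions yield finite $H^{n-1}$ bounds on singular or nodal sets once an abstract doubling inequality is available. First, Theorem~\ref{th3} gives a uniform bound on the doubling index
$N(x,r) := \log_2(\|u\|_{\mathbb{B}_{2r}(x)}/\|u\|_{\mathbb{B}_r(x)}) \leq CM^{1/3} =: N_0$
for every $x\in\mathcal{M}$ and every admissible $r$, and by Corollary~\ref{cor1} the vanishing order of $u$ at every point is likewise bounded by $N_0$. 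Covering $\mathcal{M}$ by finitely many coordinate balls (the number depending only on $\mathcal{M}$), it suffices to bound $H^{n-1}(\{u=0\}\cap \mathbb{B}_{r_0}(x_0))$ inside a single coordinate ball of small fixed radius by a constant $C(M,\mathcal{M})$.

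Argue by contradiction. If no such local bound exists within the class of bi-Laplace solutions with $\|W\|_\infty\leq M$ and doubling index $\leq N_0$, then there is a sequence $u_k$ in the class with points $y_k$ and radii $\rho_k\leq r_0$ for which $\rho_k^{-(n-1)} H^{n-1}(\{u_k=0\}\cap \mathbb{B}_{\rho_k}(y_k))\to \infty$. Rescale $v_k(z) = u_k(y_k+\rho_k z)/\|u_k\|_{L^2(\mathbb{B}_{\rho_k}(y_k))}$; then $v_k$ solves $\triangle^2 v_k = \rho_k^4 W_k(y_k+\rho_k z)v_k$, has $\|v_k\|_{L^2(\mathbb{B}_1)}=1$ and $\|v_k\|_{L^2(\mathbb{B}_2)}\leq e^{CM^{1/3}}$, and inherits the doubling bound at every scale. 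Since the rescaled potential has $L^\infty$ norm $\leq r_0^4 M$, interior $W^{4,p}$ estimates for the bi-Laplacian give uniform $C^{3,\alpha}_{loc}$ bounds, and a subsequence converges in $C^3_{loc}(\mathbb{B}_2)$ to a non-trivial limit $v_\infty$ solving a bi-Laplace equation (biharmonic if $\rho_k\to 0$) with doubling index and vanishing order still bounded by $N_0$.

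The key remaining step, and the main obstacle, is an upper semicontinuity inequality
$$\limsup_{k\to\infty} H^{n-1}(\{v_k=0\}\cap \overline{\mathbb{B}_1}) \leq H^{n-1}(\{v_\infty=0\}\cap \overline{\mathbb{B}_1}) < \infty,$$
which would directly contradict the divergence in the previous step. Finiteness of the right-hand side follows from the uniform vanishing-order bound: near each point $v_\infty$ is comparable to a non-trivial polynomial of degree $\leq N_0$, whose nodal set admits a Crofton-type $H^{n-1}$ bound as in \cite{HS}, \cite{HHL1}. Upper semicontinuity itself is subtle because $C^3$ convergence can split or merge zeros; the standard remedy is a Hardt-Simon style covering of $\mathbb{B}_1$ into finitely many controlled pieces on which each $v_k$ is polynomial-like at a scale determined by $N_0$, with constants uniform in $k$, after which one passes to the limit piece by piece. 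Adapting this Laplace-case covering to the bi-Laplace setting, using only the doubling bound of Theorem~\ref{th3} and the inherited $C^3$ convergence, yields the contradiction and completes the proof.
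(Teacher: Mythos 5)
Your contradiction/blow-up scheme identifies the right tools but hinges on a step that is actually false. The inequality
\begin{equation*}
\limsup_{k\to\infty} H^{n-1}\bigl(\{v_k=0\}\cap \overline{\mathbb B}_1\bigr) \leq H^{n-1}\bigl(\{v_\infty=0\}\cap\overline{\mathbb B}_1\bigr)
\end{equation*}
fails in general, even for biharmonic functions with uniformly bounded doubling index. Take $v_k(x)=x_1^2-k^{-2}$ on $\mathbb B_1\subset\mathbb R^2$: each $v_k$ is biharmonic, $v_k\to v_\infty=x_1^2$ in $C^\infty$ after $L^2$ normalization, and the doubling indices are uniformly bounded, yet $\{v_k=0\}$ consists of two nearby lines so $H^1(\{v_k=0\}\cap\mathbb B_1)\to 2H^1(\{v_\infty=0\}\cap\mathbb B_1)$. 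More elaborate degenerations $\prod_{j}(x_1-j/k)\to x_1^d$ show the ratio can be as large as the vanishing order, so no fixed semicontinuity holds. The Hardt--Simon machinery you cite as the remedy does not, in fact, establish semicontinuity; it sidesteps the question entirely. There is also a circularity: you invoke ``finiteness of the right-hand side'' from the vanishing-order bound and a ``Crofton-type estimate,'' but the Crofton bound only applies at scales where the leading polynomial is non-degenerate in every coordinate direction, so obtaining finite $H^{n-1}$ measure for $v_\infty$ across its singular strata is precisely the content of the theorem you are trying to prove and cannot be assumed.

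What the paper actually does (Section 6) avoids both problems. It first establishes, via the Han-type stratification (Lemma~\ref{haudim}), that at $H^{n-1}$-a.e.\ point the leading polynomial is a monomial in one variable, so near such points one can apply the integral-geometric estimate. Proposition~\ref{prohau} then shows that for a fixed solution $u$ and all $v$ with $\|u-v\|_{C^3}\leq\epsilon(u)$, the nodal set $\mathcal N(v)\cap\mathbb B_{1/2}$ splits into a part with measure $\leq C(u)$ plus a union of balls centered on $\mathcal N_\ast(u)$ with $\sum r_i^{n-1}\leq 2^{-n}$; the compactness of the normalized class $D_N^\ast$ upgrades this to a single constant $C=C(N)$ uniform over the class (Lemma~\ref{le13}). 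Finally, the proof of Theorem~\ref{th4} is a direct iteration, not a contradiction: one generates nested families of balls $\phi_0\supset\phi_1\supset\cdots$, at each stage discarding a controlled amount of nodal measure and shrinking $\sum(\mathrm{rad})^{n-1}$ geometrically, and sums the series. The quantitative doubling bound $N\leq CM^{1/3}$ from Theorem~\ref{th3} is used only at the end to make the constant depend on $M$. So the two ingredients you would need to salvage your argument --- a decomposition lemma that replaces semicontinuity, and an iteration in scale rather than a single blow-up limit --- are exactly what the paper supplies and what your proposal leaves unproved.
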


Let us comment on the organization of the article.  In Section 2,  we introduce the corresponding frequency function for bi-Laplace equations and obtain almost monotonicity results for the frequency function.
In section 3, the polynomial upper bound of nodal sets for bi-Laplace equations  was deduced inspired by the new combinatorial arguments in \cite{Lo}.
Section 4 is devoted to obtaining the doubling inequality for the solutions of bi-Laplace equations using Carleman estimates. A quantitative three-ball theorem is  shown. Section 5 is devoted for the study of a refined doubling inequality of solutions of bi-Laplace equations.
  In section 6, we present the proof of Theorem \ref{th4} for the measure of nodal sets.
   Section 7 is used to provide a detailed proof for a lemma on the propagation of smallness of the Cauchy data. The Appendix provides the proof of some ingredients in the arguments of Theorem \ref{th2}.
The letters $c$, $C$ and $C_i$ denote generic positive
constants that do not depend on $u$, and may vary from line to
line. The letter $M$ is assumed to be a sufficiently large positive constant.

\section{Frequency function of elliptic systems and its applications}
Frequency function was introduced by Almgren for harmonic functions. Garofalo and Lin \cite{GL}, \cite{GL1} developed the method of frequency function to study strong unique continuation property. Lin \cite{Lin} applied this tool to characterize the measure of nodal sets.  The frequency function describes the local growth rate
of the solution and is considered as a local measure of its ``degree" for a
polynomial like function. Interested readers are recommended to refer to the nice book in preparation by Han and Lin \cite{HL}. Logunov and Malinnikova \cite{LM}, \cite{Lo}, \cite{Lo1} further exploited the frequency function of harmonic functions with new combinatorial arguments to study nodal sets. In this section, we study the frequency function for bi-Laplace equations (\ref{bi-Laplace}), which lies the foundation for the combinatorial arguments in the later section.

Let us consider normal coordinates in a geodesic ball $\mathbb B_r(0)$, where $r$ is a sufficiently small. We treat the Laplace operator on the manifold as an elliptic operator in a domain in $\mathbb R^n$. For the Euclidean distance $d(x,y)$ and Riemannian distance $d_g(x, y)$, there exists a small number $\epsilon>0$ such that
$$1-\epsilon\leq \frac{d_g(x, y)}{d(x,y)}\leq 1+\epsilon   $$
for $x, y\in \mathbb B_{r_0}$ with $r_0$ depending on $\epsilon$ and the manifold.

To study the bi-Laplace equations (\ref{bi-Laplace}), we reduce it to be a system of second order elliptic equations. Let $v=\triangle u$. The solutions of (\ref{bi-Laplace}) satisfy
\begin{equation}
\left \{ \begin{array}{lll}
\triangle u= v, \medskip\\
\triangle v=W(x) u.
\end{array}
\right.
\label{system}
\end{equation}

Note that $\|W\|_{L^\infty}\leq M$ for $M>1$ sufficiently large. We do a scaling for the bi-Laplace equations  (\ref{bi-Laplace}). Let
$$\bar u(x)=u(\frac{x}{M^{1/4}}). \quad $$
Set $\bar v(x)= \triangle \bar u(x)$.
Then $\bar u(x)$ and $\bar v(x)$ satisfy
\begin{equation}
\left \{ \begin{array}{lll}
\triangle \bar u=\bar v, \medskip\\
\triangle \bar v=\bar W(x)\bar u,
\end{array}
\right.
\label{system1}
\end{equation}
where $\bar W(x)=\frac{W(x)}{M}$. Thus, $ \|\bar W\|_{L^\infty}\leq 1$. We will consider the elliptic systems (\ref{system1}) in the following sections for the nodal sets. For ease of the presentation, we still use the notations $u, v$ for $\bar u, \bar v$ in (\ref{system1}).
For the system of equations (\ref{system1}), we define the frequency function as follows,
\begin{equation}
I(x_0, r)=\frac{ r\big( \int_{\mathbb B_r(x_0)} |\nabla u|^2 + |\nabla v|^2 \,dx+ \int_{\mathbb B_r(x_0)} ({1}+\bar W(x)) uv  \,dx\big) }{\int_{\partial \mathbb B_r(x_0)} u^2 \ d\sigma+\int_{\partial \mathbb B_r(x_0)} v^2 \ d\sigma}.
\end{equation}
Without loss of generality, we may write $x_0=0$. We denote $I(x_0, r)=I(r)$.
We adopt the following notations.
\begin{align*}
&D_1(r)= \int_{\mathbb B_r} |\nabla u|^2 \,dx, \quad D_2(r)= \int_{\mathbb B_r} |\nabla v|^2 \,dx, \medskip\\
&D_3(r)= \int_{\mathbb B_r} ({1}+ \bar W(x)) uv  \,dx,  \medskip\\
& D(r)=D_1(r)+D_2(r)+D_3(r),\medskip\\
&H(r)=\int_{\partial \mathbb B_r} u^2 + v^2 \ d\sigma,
\end{align*}
where $d\sigma$ is $n-1$ dimensional Hausdorff measure on $\partial\mathbb B_r$. So we can write $I(r)$ as
\begin{equation}
I( r)=\frac{rD(r)}{H(r)}.
\label{defini}
\end{equation}
Next we want to show that the frequency function $I(r)$ is almost monotonic.
\begin{proposition}
For any $\epsilon>0$, there exists $r_0=r_0(\epsilon, \mathcal{M})$ such that
\begin{equation}
I(r_1)\leq C+ (1+\epsilon) I(r_2)
\end{equation}
for any $0<r_1<r_2<r_0$.
\label{monoto}
\end{proposition}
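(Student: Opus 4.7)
The plan is to follow the classical Almgren--Garofalo--Lin template adapted to the coupled system \eqref{system1}. I will derive a differential inequality of the form $(\log(I(r)+C))'\ge -C_1$ on $(0,r_0)$ and then integrate on $[r_1,r_2]$; for $r_0$ sufficiently small the factor $e^{C_1(r_2-r_1)}$ becomes $\leq 1+\epsilon$, producing $I(r_1)\leq C+(1+\epsilon)I(r_2)$. Throughout I work in normal coordinates, where the metric obeys $g_{ij}=\de_{ij}+O(r^2)$ and all Euclidean identities pick up multiplicative corrections of size $1+O(\epsilon)$.

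The starting observation is that the term $D_3(r)$ in the definition of $I$ was tailored so that integration by parts produces a pure boundary expression for $D(r)$. Using $\triangle u=v$ and $\triangle v=\bar W u$, the interior coupling terms from Green's identity in $D_1$ and $D_2$ exactly cancel $D_3$, leaving
\[
D(r)=\int_{\partial\mathbb B_r}\bigl(u\,\del_\nu u+v\,\del_\nu v\bigr)\,d\sigma.
\]
A routine boundary differentiation then gives $H'(r)=\frac{n-1}{r}H(r)+2D(r)+O(\epsilon)H/r$. To compute $D'(r)$, I start from $D'(r)=\int_{\partial\mathbb B_r}(|\nabla u|^2+|\nabla v|^2+(1+\bar W)uv)\,d\sigma$ and apply the Pohozaev--Rellich identity with radial multiplier $x\cdot\nabla u$ (resp.\ $x\cdot\nabla v$) to each equation; summing yields
\[
rD'(r)=2r\int_{\partial\mathbb B_r}\bigl((\del_\nu u)^2+(\del_\nu v)^2\bigr)\,d\sigma+(n-2)(D_1+D_2)-2E(r)+r\int_{\partial\mathbb B_r}(1+\bar W)uv\,d\sigma,
\]
where $E(r):=\int_{\mathbb B_r}[(x\cdot\nabla u)v+(x\cdot\nabla v)\bar W u]\,dx$ collects the coupling and potential contributions. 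Cauchy--Schwarz applied to the boundary formula for $D(r)$ gives $D(r)^2\leq H(r)\int_{\partial\mathbb B_r}((\del_\nu u)^2+(\del_\nu v)^2)\,d\sigma$, so the first right-hand term dominates $2rD^2/H$ after division by $D$.

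Inserting these into $(\log I)'(r)=\frac{1}{r}+\frac{D'}{D}-\frac{H'}{H}$, the leading $2D/H$ contributions cancel and what remains is controlled by the ratios $E(r)/D$, $D_3(r)/D$ and the boundary cross-term, each of which must be shown to be small. Here the boundedness $\|\bar W\|_\infty\leq 1$, together with Cauchy--Schwarz and a Poincar\'e-type estimate $\int_{\mathbb B_r}(u^2+v^2)\,dx\leq CrH(r)+Cr^2(D_1+D_2)$, reduces every error to a bounded quantity plus an $\epsilon$-small multiple of $(D_1+D_2)/D$ that can be absorbed for $r<r_0(\epsilon,\mathcal M)$. \emph{The main obstacle} is the coupling term $E(r)$: the integrand $(x\cdot\nabla u)v$ is of apparent size $r\|\nabla u\|_{L^2}\|v\|_{L^2}$, which is not automatically $o(D)$. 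The key is that $v$ is itself a component of the system, so $\|v\|_{L^2(\mathbb B_r)}$ is controlled by the Poincar\'e estimate above, and the weight $r$ in the Pohozaev multiplier compensates for the resulting loss. Once this step is in place one arrives at $(\log(I+C))'(r)\ge -C_1$, and integration on $[r_1,r_2]\subset(0,r_0)$ concludes the proof.
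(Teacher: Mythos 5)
Your proposal is correct and follows essentially the same route as the paper: reduce $D(r)$ to a boundary integral via Green's identity on the system, compute $D'(r)$ by the Rellich--Pohozaev identity with radial multipliers, bound the coupling term $E(r)$ and the potential/boundary errors by Poincar\'e and Cauchy--Schwarz, and close with the Cauchy--Schwarz inequality comparing $\int_{\partial\mathbb B_r}(u_n^2+v_n^2)$, $\int_{\partial\mathbb B_r}(u_nu+v_nv)$ and $H(r)$. The only difference is cosmetic: you absorb the additive constant into the logarithm and prove $(\log(I+C))'\ge -C_1$ on all of $(0,r_0)$, whereas the paper instead restricts to the open set $\Gamma=\{r:I(r)>\max\{1,I(r_0)\}\}$ (where $H(r)<rD(r)$ lets the $-CH$ error be absorbed into $-CD$), establishes $(\log I)'\ge-C$ there, and separately bounds $I(r_1)\le C$ when $r_1\notin\Gamma$; both devices are standard and yield the same conclusion $I(r_1)\le C+(1+\epsilon)I(r_2)$ for $r_0$ small.
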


Before presenting the proof of proposition \ref{monoto}, we establish some elementary estimates. We adapt some the arguments from the book by Han and Lin \cite{HL}.
\begin{lemma}
There exist positive constants $r_0$ and $C$ such that
\begin{equation}
\int_{\mathbb B_r} |\nabla u|^2+ |\nabla v|^2 \, dx\leq C D(r)+CrH(r)
\label{esti1}
\end{equation}
and
\begin{equation}
D(r)\leq C\int_{\mathbb B_r} |\nabla u|^2+ |\nabla v|^2\, dx+Cr H(r).
\label{esti2}
\end{equation}
\end{lemma}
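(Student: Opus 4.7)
The plan is to notice that $D(r)$ and $\int_{\mathbb B_r}(|\nabla u|^2+|\nabla v|^2)\,dx$ differ only by the cross term $D_3(r)=\int_{\mathbb B_r}(1+\bar W)uv\,dx$, so both estimates reduce to a single bound on $|D_3(r)|$. Since $\|\bar W\|_{L^\infty}\le 1$ and $2|uv|\le u^2+v^2$, we have $|D_3(r)|\le \int_{\mathbb B_r}(u^2+v^2)\,dx$, and the whole problem collapses to bounding $\int_{\mathbb B_r}(u^2+v^2)\,dx$ by the Dirichlet integral plus $rH(r)$.

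For that, I would apply a Rellich-type identity on the ball. Working in normal coordinates at the center, the metric is a small perturbation of the Euclidean one for $r\le r_0(\mathcal M)$, so integration by parts of the radial field $x$ against $w^2$ gives
$$n\int_{\mathbb B_r} w^2\,dx = r\int_{\partial \mathbb B_r} w^2\,d\sigma - 2\int_{\mathbb B_r} w\,(x\cdot \nabla w)\,dx + O(r)\int_{\mathbb B_r}w^2\,dx,$$
and Cauchy--Schwarz with a small parameter on the middle term yields
$$\int_{\mathbb B_r} w^2\,dx \le Cr\int_{\partial \mathbb B_r} w^2\,d\sigma + Cr^2\int_{\mathbb B_r}|\nabla w|^2\,dx.$$
Summing the estimates with $w=u$ and $w=v$ produces
$$|D_3(r)|\le CrH(r)+Cr^2\int_{\mathbb B_r}(|\nabla u|^2+|\nabla v|^2)\,dx.$$

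With this single inequality, both conclusions fall out. For \eqref{esti1}, write $\int_{\mathbb B_r}(|\nabla u|^2+|\nabla v|^2)\,dx = D(r)-D_3(r)\le D(r)+|D_3(r)|$, substitute the bound for $|D_3(r)|$, and shrink $r_0$ so that $Cr_0^2\le \tfrac12$ in order to absorb the gradient term on the right into the left. For \eqref{esti2}, start from $D(r)\le \int_{\mathbb B_r}(|\nabla u|^2+|\nabla v|^2)\,dx+|D_3(r)|$ and apply the same bound. The only real obstacle is making sure the Euclidean Rellich identity transfers to the manifold without loss; this is handled entirely by working in normal coordinates on a sufficiently small ball, which is precisely where the hypothesis $r\le r_0(\mathcal M)$ is used. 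No use is made of the PDE itself at this stage; it is a purely geometric/Poincar\'e-type manipulation.
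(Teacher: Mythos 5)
Your proposal is correct and follows essentially the same path as the paper: both reduce the two inequalities to bounding $|D_3(r)|\le C\int_{\mathbb B_r}(u^2+v^2)\,dx$ and then invoke the Poincar\'e-type inequality $\int_{\mathbb B_r}w^2\,dx \le \frac{2r}{n}\int_{\partial\mathbb B_r}w^2\,d\sigma + \frac{4r^2}{n^2}\int_{\mathbb B_r}|\nabla w|^2\,dx$ (the paper cites it as Lemma 3.2.2 in Han--Lin, you re-derive it from the Rellich/divergence identity for the radial field $x$), after which the $Cr^2\int(|\nabla u|^2+|\nabla v|^2)$ term is absorbed by shrinking $r_0$.
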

\begin{proof}
 For any $w\in H^1 (\mathbb B)$, the following estimate holds
\begin{equation}
\int_{\mathbb B_r} w^2 \, dx\leq \frac{2r}{n}\int_{\partial \mathbb B_r} w^2\, d\sigma+\frac{4r^2}{n^2} \int_{\mathbb B_r}|\nabla w|^2 \, dx.
\label{hanlin}
\end{equation}
See e.g. Lemma 3.2.2 in \cite{HL}. From the definition of $D(r)$ and the assumption of $\bar W$, using Cauchy-Schwartz inequality, we have
\begin{align*}
\int_{\mathbb B_r} |\nabla u|^2+ |\nabla v|^2 \, dx&\leq D(r)-\int_{\mathbb B_r} ({1}+ \bar W(x)) uv \, dx
\nonumber\\
&\leq D(r)+ C \int_{\mathbb B_r} u^2+ v^2 \, dx.
\end{align*}
Using the estimates (\ref{hanlin}) for $u$ and $v$, we obtain that
\begin{align}
\int_{\mathbb B_r} |\nabla u|^2+ |\nabla v|^2 \, dx
&\leq D(r)+C\big( \frac{2r}{n}\int_{\partial \mathbb B_r} u^2\, d\sigma+\frac{4r^2}{n^2} \int_{\mathbb B_r}|\nabla u|^2 \, dx \nonumber \\
 & +\frac{2r}{n}\int_{\partial \mathbb B_r} v^2\, d\sigma+\frac{4r^2}{n^2} \int_{\mathbb B_r}|\nabla v|^2 \, dx       \big).\nonumber
  \end{align}
  Since $r\in (0, \ r_0)$ for some small $r_0$, we can show that
 \begin{align}
\int_{\mathbb B_r} |\nabla u|^2+ |\nabla v|^2 \, dx &\leq CD(r)+ Cr \int_{\partial \mathbb B_r} v^2+u^2\, d\sigma\nonumber\\
&\leq CD(r)+CrH(r)
\end{align}
 for some positive constant $C$. Thus, the inequality (\ref{esti1}) is arrived. From the definition of $D(r)$, Cauchy-Schwartz inequality and (\ref{hanlin}), we can easily derive that
\begin{align}
D(r)&\leq \int_{\mathbb B_r} |\nabla u|^2+ |\nabla v|^2 \, dx+ \int_{\mathbb B_r} |{1}+\bar W(x)| |uv|\, dx \nonumber \\
&\leq \int_{\mathbb B_r} |\nabla u|^2+ |\nabla v|^2 \, dx+ C \int_{\mathbb B_r} u^2+ v^2 \, dx\nonumber \\
&\leq  \int_{\mathbb B_r} |\nabla u|^2+ |\nabla v|^2 \, dx+ C\big( \frac{2r}{n}\int_{\partial \mathbb B_r} u^2\, d\sigma+\frac{4r^2}{n^2} \int_{\mathbb B_r}|\nabla u|^2 \, dx \nonumber\\
& +\frac{2r}{n}\int_{\partial \mathbb B_r} v^2\, d\sigma+\frac{4r^2}{n^2} \int_{\mathbb B_r}|\nabla v|^2 \, dx       \big)\nonumber\\
&\leq  C \int_{\mathbb B_r} |\nabla u|^2+ |\nabla v|^2 \, dx+Cr\int_{\partial \mathbb B_r} u^2+v^2\, d\sigma.
\end{align}
Hence, this leads to the inequality (\ref{esti2}).

\end{proof}

We can check that $H(r)\not=0$ for any $r\in (0, r_0)$. If $H(r)=0$ for some $r\in (0, r_0)$, the definition of $H(r)$ implies that $u=v=0$ on $\partial\mathbb B_r$. From the elliptic systems (\ref{system1}) and integration by parts argument, we will derive $D(r)=0$. From (\ref{esti1}), then $(u, v)$ is some constant. Moreover, $u=v=0$ in $\mathbb B_r$ since $u=v=0$ on $\partial\mathbb B_r$. By the strong unique continuation property,  $u\equiv v\equiv 0$ in $\mathcal{M}$, which leads to a contradiction.
Thus,
$I(r)$ is absolutely continuous on $(0, \ r_0)$. If we set
$$ \Gamma=\{ r\in (0, \ r_0): I(r)> \max\{ 1, I(r_0)\}\},$$
then $\Gamma$ is an open set. There holds a decomposition $\Gamma=\bigcup^\infty_{j=1} (a_j, \ b_j)$ with $a_j, b_j\not\in \Gamma$. For
$r\in \Gamma$, we have $I(r)>1$, i.e.
\begin{equation}
\frac{H(r)}{r}< D(r).
\label{NNNKao}
\end{equation}

With those preparations, we are ready to give the proof of the proposition.
\begin{proof}
We will consider the derivative of $I(r)$. So we first consider derivative of $D_1(r)$, $D_2(r)$, $D_3(r)$ and $H(r)$ with respect to $r$ in some interval $(a_i, \ b_i)$. It is obvious that
\begin{equation}
D_1^{'}(r)= \int_{\partial \mathbb B_r} |\nabla u|^2 \, d\sigma.
\end{equation}
Since $|x|=r$ on $\partial \mathbb B_r$, we write
\begin{equation}
D_1^{'}(r)=\int_{\partial \mathbb B_r} |\nabla u|^2 \frac{x}{r} \cdot \frac{x}{r}    \, d\sigma.
\end{equation}
Note that the unit norm $\vec{n}$ on $\partial \mathbb B_r$ is $\frac{x}{r}$. Performing the integration by parts gives that
\begin{align*}
D_1^{'}(r)
&=\frac{1}{r} \int_{ \mathbb B_r} div (|\nabla u|^2 \cdot x) \, dx \\
&=\frac{n}{r} \int_{ \mathbb B_r}|\nabla u|^2\, dx+\frac{2}{r} \int_{ \mathbb B_r}\nabla u\cdot \nabla^2 u\cdot x\,dx\\
&=\frac{n-2}{r} \int_{ \mathbb B_r}|\nabla u|^2\, dx-\frac{2}{r}\int_{ \mathbb B_r}\triangle u\nabla u\cdot x\, dx+\frac{2}{r^2}\int_{\partial \mathbb B_r}(\nabla u\cdot x)^2 \, d\sigma.
\end{align*}
From the first equation of the elliptic systems (\ref{system1}),
\begin{equation}
D_1^{'}(r)=\frac{n-2}{r} \int_{ \mathbb B_r}|\nabla u|^2\, dx-\frac{2}{r}\int_{\mathbb B_r} v \nabla u\cdot x\, dx+2\int_{\partial \mathbb B_r}u_n ^2 \, d\sigma,
\label{ddd}
\end{equation}
where $u_n=\frac{\partial u}{\partial n}=\nabla u\cdot \vec{n}$.
Performing similar calculations also shows that
\begin{align}
D_2^{'}(r)&=\frac{n-2}{r} \int_{ \mathbb B_r}|\nabla v|^2\, dx-\frac{2}{r}\int_{\mathbb B_r}\triangle v\nabla v\cdot x\, dx+\frac{2}{r^2}\int_{\partial \mathbb B_r}(\nabla v \cdot x)^2 \, d\sigma \nonumber \\
&=\frac{n-2}{r} \int_{\mathbb B_r}|\nabla v|^2\, dx-\frac{2}{r}\int_{ \mathbb B_r} \bar W(x) u\nabla v\cdot x\, dx+2\int_{\partial \mathbb B_r}v_n ^2 \, d\sigma.
\label{ddd1}
\end{align}
Direct calculations lead to that
\begin{align}
D'_3(r)&=\int_{\partial \mathbb B_r}({1}+\bar W(x)) uv \, d\sigma.
\label{ddd2}
\end{align}
We write $H(r)$ as
$$ H(r)=r^{n-1} \int_{\partial \mathbb B_1} u^2(rs)+v^2(rs) \, d\sigma. $$
Computing $H(r)$ with respect to $r$ gives that
\begin{equation}
H'(r)=\frac{n-1}{r} H(r)+2\int_{\partial \mathbb B_r} u_n u+ v_n v  \, d\sigma.
\label{hhh}
\end{equation}
If multiplying the first equation in (\ref{system1}) by $u$ and the second equation in (\ref{system}) by $v$, using integration by parts arguments, one has
\begin{equation}
D(r)=\int_{\partial \mathbb B_r} u_n u+ v_n v  \, d\sigma.
\end{equation}
Cauchy-Schwartz inequality yields that
\begin{align}
D^2(r)&\leq (\int_{\partial \mathbb B_r} u^2+v^2 \, d\sigma) (\int_{\partial \mathbb B_r} u^2_n+v^2_n \, d\sigma) \nonumber \\
&\leq H(r)\int_{\partial \mathbb B_r} u^2_n+v^2_n\, d\sigma.
\end{align}
If (\ref{NNNKao}) holds, then
$$  D(r)\leq r \int_{\partial \mathbb B_r} u^2_n+v^2_n \, d\sigma.  $$

The combination of the inequalities (\ref{ddd}), (\ref{ddd1}) and (\ref{ddd2}) yields that
\begin{align}
D'(r)&=\frac{n-2}{r} (\int_{ \mathbb B_r}|\nabla u|^2+|\nabla v|^2 \, dx)-\frac{2}{r}(\int_{\mathbb B_r}  v\nabla u\cdot x\, dx +\bar W(x)u\nabla v\cdot x\,dx)\nonumber \\& +2\int_{\partial \mathbb B_r}(u_n^2+v_n^2) \, d\sigma
+\int_{\partial \mathbb B_r}(1+\bar W(x)) uv \, d\sigma \nonumber \\
&=\frac{n-2}{r} D(r)-\frac{n-2}{r}\int_{ \mathbb B_r} (1+\bar W(x)) uv \, dx
\nonumber \\& -\frac{2}{r}(\int_{\mathbb B_r} v\nabla u\cdot x\, dx +\bar W(x)u\nabla v\cdot x\,dx)+2\int_{\partial \mathbb B_r} (u_n^2+v_n^2) \, d\sigma  \nonumber \\
&
+\int_{\partial \mathbb B_r}(1+\bar W(x)) uv \, d\sigma.
\label{hand}
\end{align}
We investigate the terms on the right hand side of (\ref{hand}). Using Cauchy-Schwartz inequality, we can get the following
\begin{equation}
\frac{2}{r}\int_{\mathbb B_r} v\nabla u\cdot x\, dx  \leq C\big(\int_{\mathbb B_r} v^2\, dx +\int_{\mathbb B_r} |\nabla u|^2\, dx\big),
\label{hand1}
\end{equation}
\begin{equation}
\frac{2}{r}\int_{\mathbb B_r} \bar W(x)u\nabla v\cdot x\, dx\leq C\big(\int_{\mathbb B_r} u^2\, dx +\int_{\mathbb B_r} |\nabla v|^2\, dx\big).
\label{hand3}
\end{equation}
From Cauchy-Schwartz inequality and the inequality (\ref{hanlin}), we derive that
\begin{align}
\frac{n-2}{r}\int_{\mathbb B_r} (1+\bar W(x)) vu\, dx &\leq \frac{C}{r} \int_{\mathbb B_r} u^2+v^2 \, dx \nonumber\\
&\leq C\int_{\partial \mathbb B_r} u^2+v^2 \, d\sigma+Cr \int_{ \mathbb B_r}|\nabla u|^2+|\nabla v|^2 \, dx\nonumber\\
&\leq CH(r)+ Cr D(r).
\label{hand5}
\end{align}

Together with the estimates (\ref{hand})--(\ref{hand5}), we arrive at
\begin{align*}
D'(r)&\geq \frac{n-2}{r} D(r)-CH(r)-CD(r)+2\int_{\partial \mathbb B_r} (u_n^2+v_n^2) \, d\sigma.
\label{handover}
\end{align*}
Since (\ref{NNN}) holds, furthermore, we have
\begin{align}
D'(r)\geq \frac{n-2}{r} D(r)-CD(r)+2\int_{\partial \mathbb B_r} (u_n^2+v_n^2) \, d\sigma.
\end{align}
Recall that $H'(r)$ is given in (\ref{hhh}). We consider the derivative of $I(r)$ with respective to $r$. Taking the definition of $D(r), H(r), I(r)$ and estimates (\ref{hhh}), (\ref{handover}) into account, we obtain that
\begin{align}
(\ln I(r))'&=\frac{1}{r}+\frac{D'(r)}{D(r)}-\frac{H'(r)}{H(r)} \nonumber\\
&\geq \frac{1}{r}+\frac{n-2}{r}-C+\frac{2\int_{\partial \mathbb B_r} (u_n^2+v_n^2) \, d\sigma}{ D(r)}-\frac{n-1}{r}- \frac{2\int_{\partial \mathbb B_r} (u_n u+v_n v ) \, d\sigma}{ H(r)}\nonumber\\
&\geq \frac{2\int_{\partial \mathbb B_r} (u_n^2+v_n^2) \, d\sigma}{\int_{\partial \mathbb B_r} (u_n u+v_n v ) \, d\sigma}- \frac{2\int_{\partial \mathbb B_r} (u_n u+v_n v ) \, d\sigma}{
\int_{\partial \mathbb B_r} ( u^2+ v^2 ) \, d\sigma}-C.
\end{align}
By Cauchy-Schwartz inequality, it follows that
\begin{equation}
\frac{I'(r)}{I(r)}\geq -C.
\label{fremono}
\end{equation}
Hence $e^{Cr}I(r)$ is monotone increasing in the component $(a_i, \ b_i)$. Thus, in this decomposition,
\begin{align}
I(r_1)&\leq I(r_2) e^{C(r_2-r_1)} \nonumber \\
&\leq e^{C(r_0-r_1)} I(r_2)
\label{in}
\end{align}
for $a_i<r_1<r_2<b_i<r_0$.
If $r_1\not \in (a_i, \ b_i)$, from the definition of the set $\Gamma$,
\begin{equation}I(r_1)\leq C.  \label{not} \end{equation}
Together with (\ref{in}) and (\ref{not}), for any $r_1\in (0, \ r_0)$, we get that
\begin{equation}
I(r_1)\leq C+e^{C(r_2-r_1)} I(r_2).
\end{equation}
If $r_0$ is sufficiently small, we have
\begin{equation}
I(r_1)\leq C+(1+\epsilon)I(r_2)
\end{equation}
for $0<r_1<r_2<r_0$. Therefore, the proposition is arrived.
\end{proof}

Let's derive some properties for $H(r)$. Since
$$ H'(r)=\frac{n-1}{r} H(r)+2D(r),  $$
then
\begin{equation}
\frac{d}{dr}\ln \frac{H(r)}{r^{n-1}}=\frac{2I(r)}{r}.
\label{star}
\end{equation}
Integrating from $R$ to $2R$ gives that
\begin{equation}
H(2R)=2^{n-1}H(R)\exp\{\int_R^{2R}\frac{2I(r)}{r}\,dr\}
\end{equation}
for $a_i\leq 2R\leq b_i$. Thus, from (\ref{fremono}),
\begin{equation}
H(2R)\leq 2^{n-1}H(R) 4^{ CI(b_i)}.
\label{ddoub}
\end{equation}
From (\ref{esti1}), we learn that $ \frac{I(r)}{r}\geq -C$ for some positive constant $C$. From (\ref{star}), it is also true that the function
\begin{equation} \frac{e^{Cr} H(r)}{r^{n-1}} \ \mbox{ is increasing for}   \quad r\in (0, \ r_0).
\label{amono}
\end{equation}

Following from the arguments in \cite{Lo} for harmonic functions, we show some applications of the almost monotonicity results for second order elliptic systems with potential functions.
\begin{corollary} Let $\epsilon$ be a small constant. There exists $R>0$ such that
\begin{equation}
(\frac{r_2}{r_1})^{2(1+\epsilon)^{-1}I(r_1)-C_1}\leq \frac{H(r_2)}{H(r_1)}\leq (\frac{r_2}{r_1})^{2(1+\epsilon)I(r_2)+C_1}
\label{coro1}
\end{equation}
for $0<r_1<r_2<R$.
\label{coro2}
\end{corollary}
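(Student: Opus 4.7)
The plan is to differentiate $\ln H$ and integrate, using Proposition \ref{monoto} as the key ingredient to replace the a priori unknown values of $I$ on the interval of integration by $I(r_1)$ and $I(r_2)$.

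First I would invoke identity \eqref{star}, which gives
\begin{equation*}
\frac{d}{dr}\ln\frac{H(r)}{r^{n-1}}=\frac{2I(r)}{r}.
\end{equation*}
Integrating from $r_1$ to $r_2$ (with $0<r_1<r_2<r_0$) yields
\begin{equation*}
\ln\frac{H(r_2)}{H(r_1)}=(n-1)\ln\frac{r_2}{r_1}+\int_{r_1}^{r_2}\frac{2I(r)}{r}\,dr.
\end{equation*}
The whole problem now reduces to estimating the integral $\int_{r_1}^{r_2}\tfrac{2I(r)}{r}\,dr$ in terms of $I(r_1)$ (from below) and $I(r_2)$ (from above).

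For the upper bound, I would apply Proposition \ref{monoto} with the pair $(r,r_2)$ for each $r\in(r_1,r_2)$, giving $I(r)\leq C+(1+\epsilon)I(r_2)$; integrating against $\tfrac{2}{r}\,dr$ on $(r_1,r_2)$ produces $\bigl(2(1+\epsilon)I(r_2)+2C\bigr)\ln(r_2/r_1)$. Absorbing the $(n-1)\ln(r_2/r_1)$ boundary term and the constant $2C$ into a single constant $C_1$ delivers the right-hand inequality. For the lower bound, I would apply the proposition with the pair $(r_1,r)$ instead, giving $I(r_1)\leq C+(1+\epsilon)I(r)$, equivalently
\begin{equation*}
I(r)\geq (1+\epsilon)^{-1}\bigl(I(r_1)-C\bigr).
\end{equation*}
Integrating this against $\tfrac{2}{r}\,dr$ and again lumping $-(n-1)$ and the additive constant into $C_1$ (possibly enlarged) yields the left-hand inequality. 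Exponentiating recovers the stated bounds.

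The main obstacle is purely bookkeeping: one must ensure the lower bound on $I(r)$ used in the integration is legitimate (since $I$ could in principle be close to zero or even slightly negative through the error term), but this is handled by the same small-$r_0$ requirement used in Proposition \ref{monoto}, together with the uniform lower bound on $I$ implicit in \eqref{esti1} (noted just after \eqref{amono}). With that in place, the constants $2C$ coming from the almost monotonicity and the $\pm(n-1)$ coming from the $r^{n-1}$ normalization are all absorbed into the single constant $C_1$, and the exponents $2(1+\epsilon)I(r_2)+C_1$ and $2(1+\epsilon)^{-1}I(r_1)-C_1$ emerge directly. Choosing $R\leq r_0$ small enough to apply Proposition \ref{monoto} completes the proof.
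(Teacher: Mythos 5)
Your proposal is correct and mirrors the paper's own proof: both integrate the identity $(\ln(H(r)/r^{n-1}))' = 2I(r)/r$ from $r_1$ to $r_2$ and then invoke the almost-monotonicity of Proposition \ref{monoto} to bound $I(r)$ above by $(1+\epsilon)I(r_2)+C$ and below by $(1+\epsilon)^{-1}(I(r_1)-C)$ inside the integral, absorbing the $n-1$ normalization and additive constants into $C_1$. The worry you flag about $I$ possibly being negative is not actually needed, since Proposition \ref{monoto} holds with no sign restriction and supplies the lower bound directly, but raising and dismissing it does no harm.
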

\begin{proof}
For $0<r_1<r_2<r_0$, the integration of (\ref{star}) from $r_1$ to $r_2$ gives that
$$ H(r_2)=H(r_1)(\frac{r_2}{r_1})^{n-1} \exp\{ 2\int_{r_1}^{r_2} \frac{I(r)}{r} \,dr\}.   $$
Using the almost monotonicity of the frequency function in Proposition \ref{monoto}, we have
\begin{equation}
(\frac{r_2}{r_1})^{n-1} e^{(2(1+\epsilon)^{-1} N(r_1)-C)\ln \frac{r_2}{r_1}}  \leq \frac{H(r_2)}{H(r_1)}\leq (\frac{r_2}{r_1})^{n-1} e^{(2(1+\epsilon)I(r_2)+C)\ln \frac{r_2}{r_1}},
\end{equation}
which implies the corollary.
\end{proof}
We define the doubling index as
\begin{equation}
{N}(\mathbb B_r)=\log_2 \frac{\sup_{2\mathbb B_r}|( u, v)|}{\sup_{\mathbb B_r}|( u, v)|},
\label{defined}
\end{equation}
where $\sup_{\mathbb B_r}|( u, v)|=\|u\|_{L^\infty(\mathbb B_r)}+\|v\|_{L^\infty(\mathbb B_r)}$. For a positive number $\rho$, $\rho\mathbb B$ is denoted
as the ball scaled by a factor $\rho>0$ with the same center as $\mathbb B$. ${N}(x, r)$ is the double index for $(u, v)$ on the ball $\mathbb B(x, r)$.

Assume that $0< \epsilon<\frac{1}{10^8}$.
By standard elliptic estimates, the first equation of (\ref{system1}) implies that
\begin{equation}
\|u\|_{L^\infty(\mathbb B_r)}\leq C\epsilon^{-\frac{n}{2}} r^{-\frac{n}{2}}\big( \|u\|_{L^2(\mathbb B_{(1+\epsilon)^2r})}+
\|v\|_{L^2(\mathbb B_{(1+\epsilon)^2r})} \big)
\end{equation}
and the second equation of (\ref{system1}) gives that
\begin{equation}
\|v\|_{L^\infty(\mathbb B_r)}\leq C \epsilon^{-\frac{n}{2}} r^{-\frac{n}{2}}\big( \|v\|_{L^2(\mathbb B_{(1+\epsilon)^2r})}+
\|u\|_{L^2(\mathbb B_{(1+\epsilon)^2r})}\big ).
\end{equation}
Thus,
\begin{equation}
\|(u, v)\|_{L^\infty(\mathbb B_r)}\leq C  \epsilon^{-\frac{n}{2}} r^{-\frac{n}{2}} \|(u, v)\|_{L^2(\mathbb B_{(1+\epsilon)r})}.
\label{infinity}
\end{equation}
It is obvious that
\begin{equation}
\|(u, v)\|_{L^2(\mathbb B_r)}\leq C r^{\frac{n}{2}} \|(u, v)\|_{L^\infty(\mathbb B_r)}.
\label{infinity1}
\end{equation}

 Next we obtain a lower bound for ${N}(\mathbb B_r)$.
 The fact that $\frac{e^{Cr} H(r)}{r^{n-1}}$ is increasing in (\ref{amono}) and the inequality (\ref{infinity}) leads to
\begin{align}
\|(u, v)\|_{L^\infty(\mathbb B_r)}^2&\leq C \epsilon^{-n} r^{-n} \int_0^{(1+\epsilon)^2 r} \int_{\partial\mathbb B_s} |(u, v)|^2 \, d\sigma d s \nonumber \\
& \leq  C\epsilon^{-n} r^{-n} e^{C(1+\epsilon)^2r} \frac{H((1+\epsilon)^2 r)}{[(1+\epsilon) r]^{n-1}} \int_0^{(1+\epsilon)^2 r}\frac{s^{n-1}}{e^{Cs}} \,d s
 \nonumber \\
&\leq  C\frac{\epsilon^{-n} H((1+\epsilon)^2r)}{ r^{n-1}}e^{C(1+\epsilon)^2r_0}   \nonumber \\
&\leq  C\frac{\epsilon^{-n} H((1+\epsilon)^2r)}{ r^{n-1}}.
\label{end1}
\end{align}
From (\ref{infinity1}), it holds that
$$ \|(u, v)\|_{L^\infty(\mathbb B_{2r})}^2\geq \frac{C}{r^n} \int_{2(1-\epsilon)r}^{2r} H(s)\, ds.  $$
Thanks to the monotonicity of $\frac{e^{Cr} H(r)}{r^{n-1}}$  again,
\begin{align}
\|(u, v)\|_{L^\infty(\mathbb B_{2r})}^2&\geq
 C r^{-n} e^{C(1-\epsilon)r} \frac{H(2(1-\epsilon) r)}{[2(1-\epsilon) r]^{n-1}} \int_{2(1-\epsilon)r}^{2 r}\frac{s^{n-1}}{e^{Cs}} \,d s \nonumber \\
 &\geq C \frac{\epsilon H(2(1-\epsilon)r)}{r^{n-1}} e^{-2Cr_0}r_0  \nonumber \\
 &\geq C \frac{\epsilon H(2(1-\epsilon)r)}{r^{n-1}}.
\label{end2}
\end{align}
Therefore, from (\ref{end1}) and (\ref{end2}), we have
\begin{align}
{N}(\mathbb B_r)&=\log_2 \frac{\sup_{2\mathbb B_r}|( u, v)|}{\sup_{\mathbb B_r}|( u, v)|} \nonumber \\
&\geq \frac{1}{2}\log_2\frac{ \epsilon^{n+1} H(2(1-\epsilon)r)}{C H((1+\epsilon)^2r)}.
\end{align}
The lower estimates in (\ref{coro1}) leads to
\begin{align}
{N}(\mathbb B_r)&\geq \frac{1}{2}\log_2\Big(\frac{\epsilon^{n+1}}{C} \big[\frac{2(1-\epsilon)}{(1+\epsilon)^2}\big]^{2(1+\epsilon)^{-1}I((1+\epsilon)^2r)-C}\Big) \nonumber \\
& \geq I((1+\epsilon_1)r)(1-20\epsilon_1)+C\log_2\epsilon_1,
\label{what}
\end{align}
where  $(1+\epsilon)^2=1+\epsilon_1$ with $\epsilon $ sufficiently small.
We can also find an upper bound of the double index in term of the frequency function. Using (\ref{end1}) and (\ref{end2}), we have
\begin{align}
{N}(\mathbb B_r)&=\log_2 \frac{\sup_{2\mathbb B_r}|( u, v)|}{\sup_{\mathbb B_r}|( u, v)|} \nonumber \\
&\leq \frac{1}{2} \log_2 \frac{C\epsilon^{-n} r^{\frac{1-n}{2}} H\big(2(1+\epsilon)^2r\big)}{\epsilon r^{\frac{1-n}{2}} H\big((1-\epsilon)r\big) }.
\end{align}
It is true from (\ref{coro1}) that
\begin{equation}
\frac{H(2(1+\epsilon)^2r)}{H((1-\epsilon)r)} \leq \big(\frac{2(1+\epsilon)^2}{1-\epsilon}\big)^{2(1+\epsilon)I(2(1+\epsilon)^2  r)+C_2}.
\end{equation}
Thus, we further obtain that
\begin{align}
{N}(\mathbb B_r) &\leq \frac{1}{2}\log_2 \Big(C \epsilon^{-n-1}\big(\frac{2(1+\epsilon)^2}{1-\epsilon}\big)^{2(1+\epsilon)^2 I(2(1+\epsilon)^2r)+C_2}\Big).
\label{what1}
\end{align}
Let $(1+\epsilon)^2=1+\epsilon_1$ again, i.e. $\epsilon_1\approx 2\epsilon$. We can check that
$$ \frac{1}{2}\log_2 \Big(C \epsilon^{-n-1}\big[\frac{2(1+\epsilon)^2}{1-\epsilon}\big]^{2(1+\epsilon)^2 I(2(1+\epsilon)^2r)+C_2}\Big)
\leq I(2r(1+\epsilon_1))(1+20 \epsilon_1)-C\log_2\epsilon_1
$$
for $\epsilon $ sufficiently small.

In conclusion, from (\ref{what}) and (\ref{what1}), we have shown that
\begin{equation}
I( r(1+\epsilon_1))(1-50\epsilon_1)+C\log_2\epsilon_1 \leq N(\mathbb B_r)\leq I(  2r(1+\epsilon_1))(1+50\epsilon_1)-C\log_2\epsilon_1.
\label{compare}
\end{equation}

\begin{lemma}
Let $\epsilon$ be a small positive constant. There exists $R$ such that
\begin{equation}
t^{N(x,\rho)(1-\epsilon)+C\log_2\epsilon} \sup_{\mathbb B_{\rho}(x)}|(u, v)|\leq \sup_{\mathbb B_{t\rho}(x)}|(u, v)| \leq t^{N(x,t\rho)(1+\epsilon)-C\log_2\epsilon} \sup_{\mathbb B_{\rho}(x)}|(u, v)|
\label{overrr}
\end{equation}
for $t>2$, $t\rho<R$ and any $x\in \mathbb B_R$ with $\mathbb B_{t\rho}(x)\subset \mathbb B_R$.
Furthermore, there exists $N_0$ such that if $N(x, \rho)\geq N_0$, then
\begin{equation}
t^{N(x,\rho)(1-\epsilon)} \sup_{\mathbb B_{\rho}(x)}|(u, v)|\leq \sup_{\mathbb B_{t\rho}(x)}|(u, v)|\leq  t^{N(x,t\rho)(1+\epsilon)} \sup_{\mathbb B_{\rho}(x)}|(u, v)| .
\label{need1}
\end{equation}
\label{cann}
\end{lemma}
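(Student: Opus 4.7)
The plan is to deduce both bounds from the comparison between the doubling index $N$ and the frequency function $I$ in (\ref{compare}) by combining it with the almost monotonicity of $I$ from Proposition \ref{monoto}, Corollary \ref{coro2} for the ratio $H(r_2)/H(r_1)$, and the $L^\infty$-to-$H$ estimates (\ref{end1}) and (\ref{end2}). For the upper bound, I would apply (\ref{end1}) at radius $t\rho$ to get $\sup_{\mathbb B_{t\rho}}|(u,v)|^2 \leq C\epsilon^{-n} H((1+\epsilon)^2 t\rho)/(t\rho)^{n-1}$, and the mean-value estimate $\sup_{\mathbb B_\rho}|(u,v)|^2 \geq cH(\rho)/\rho^{n-1}$ to control the denominator. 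Corollary \ref{coro2} dominates $H((1+\epsilon)^2 t\rho)/H(\rho)$ by $((1+\epsilon)^2 t)^{2(1+\epsilon)I((1+\epsilon)^2 t\rho)+C_1}$, and the first inequality of (\ref{compare}) applied with $r=t\rho$ and $1+\epsilon_1=(1+\epsilon)^2$ converts the $I$ term into essentially $N(x,t\rho)(1+\epsilon) - C\log_2\epsilon$. Taking square roots and absorbing the $\epsilon$-dependent prefactors into the fudge term $t^{-C\log_2\epsilon}$ (which is a large power of $\epsilon^{-1}$ for $\epsilon$ small and $t\geq2$) yields the upper bound in (\ref{overrr}).

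The lower bound is subtler, because (\ref{compare}) only supplies an upper bound on $I$ at scales $\approx\rho$ in terms of $N(x,\rho)$, never a lower one. To get around this I would first establish an auxiliary monotonicity of the doubling index: for every $r$ with $2(1+\epsilon_1)\rho \leq r < R$,
\[
N(x,r) \geq N(x,\rho)(1-C\epsilon) + C'\log_2\epsilon.
\]
The idea is to apply the lower half of (\ref{compare}) at radius $r$ to get $N(x,r)\geq I((1+\epsilon_1)r)(1-50\epsilon_1)+C\log_2\epsilon_1$, invoke the almost monotonicity $I((1+\epsilon_1)r)\geq (I(2(1+\epsilon_1)\rho)-C)/(1+\epsilon)$ from Proposition \ref{monoto}, and then use the upper half of (\ref{compare}) at radius $\rho$ to produce $I(2(1+\epsilon_1)\rho)\geq (N(x,\rho)+C\log_2\epsilon_1)/(1+50\epsilon_1)$. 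Equipped with this monotonicity, for $t=2^k$ I iterate the definition of the doubling index, $\sup_{\mathbb B_{t\rho}}|(u,v)|=\prod_{j=0}^{k-1}2^{N(x,2^j\rho)}\sup_{\mathbb B_\rho}|(u,v)|$, and apply the auxiliary monotonicity to each factor with $j\geq1$; for a general $t>2$ I replace the dyadic chain by a finer geometric one with ratio $s=t^{1/k}$ slightly less than $2$ and use an analogous $s$-doubling estimate derived from the same three ingredients.

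The refined bound (\ref{need1}) under the hypothesis $N(x,\rho)\geq N_0$ follows by choosing $N_0$ large enough, depending on $\epsilon$, so that $\epsilon N_0$ absorbs $C|\log_2\epsilon|$, converting the additive $C\log_2\epsilon$ term into a slightly enlarged multiplicative $(1\pm\epsilon)$ factor. The main obstacle I expect is precisely the asymmetry between the two halves of (\ref{compare}): its lower half places $I$ one dyadic scale too large, so reaching back to scale $\rho$ requires the auxiliary $N$-monotonicity above, and carefully tracking the cascading $\epsilon$-losses through a chain of nested small parameters is the most delicate bookkeeping step of the argument.
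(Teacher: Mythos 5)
Your upper-bound argument is essentially the paper's, but your lower-bound argument diverges substantially, and the divergence creates real gaps. The paper does \emph{not} telescope the doubling index over dyadic scales, and it does not need the auxiliary $N$-monotonicity you propose. Instead it applies Corollary~\ref{coro2} exactly once, from radius $2\rho(1+\epsilon_1)$ up to radius $t\rho$, using the lower bound $I(2\rho(1+\epsilon_1))\geq\big(N(x,\rho)+C\log_2\epsilon_1\big)/(1+50\epsilon_1)$ that falls out of the right-hand inequality of (\ref{compare}). The ``one dyadic scale too large'' problem you correctly identify is then handled not by iterating a doubling-index monotonicity but by the single observation (\ref{conf3}): from (\ref{end1}) and the very definition (\ref{defined}) of $N(x,\rho)$ one has
\begin{equation*}
H\big(x, 2\rho(1+\epsilon_1)\big)\geq C\,\epsilon_1^{n}\rho^{n-1}\sup_{\mathbb B_{2\rho}(x)}|(u,v)|^2 = C\,2^{2N(x,\rho)}\epsilon_1^{n}\rho^{n-1}\sup_{\mathbb B_{\rho}(x)}|(u,v)|^2,
\end{equation*}
so the factor $2^{2N(x,\rho)}$ pays for exactly the missing $\rho$-to-$2\rho$ stretch. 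Everything then combines in a single chain to give (\ref{lei1})--(\ref{lei2}) and the lower bound in (\ref{overrr}), uniformly for all $t>2^{1+\epsilon}$, with the range $2<t\leq 2^{1+\epsilon}$ disposed of trivially from the definition of $N$ (since $2>t^{1-\epsilon}$ there).

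Your telescoping route, besides being considerably more complicated, has a genuine hole in the non-dyadic case. Writing $t=2^k\cdot s$ with $1\leq s<2$ and dropping $\sup_{\mathbb B_{t\rho}}\geq\sup_{\mathbb B_{2^k\rho}}$ loses a factor $(t/2^k)^{N(1-\epsilon)+C\log_2\epsilon}$, which for large $N$ is not absorbable into the $t^{C\log_2\epsilon}$ fudge. Your alternative of using a base $s=t^{1/(k+1)}<2$ geometric chain requires an ``$s$-doubling'' analogue of (\ref{compare}), which is not available off the shelf: the doubling index in (\ref{defined}) and the whole derivation of (\ref{compare}) through (\ref{end1})--(\ref{what1}) are tied to the factor $2$, and re-deriving them with a variable ratio is essentially redoing the section. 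Also, for $t$ close to $2$ your auxiliary monotonicity hypothesis $r\geq 2(1+\epsilon_1)\rho$ fails for the first factor in the chain, so even the dyadic base case $j=1$ needs a separate patch. The cleaner way to proceed is to abandon the $N$-telescope and go through $H$ once via Corollary~\ref{coro2} as the paper does; the $H$-function monotonicity is already continuous in the radius, so no dyadic discretization, no $s$-doubling, and no special-casing of the fractional part are needed.
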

\begin{proof}
We first show the proof of the left hand side of (\ref{overrr}) and (\ref{need1}). We assume $t> 2^{1+\epsilon}$. If not, then $2<t\leq  2^{1+\epsilon}$. It follows that
\begin{align*}
\sup_{\mathbb B_{t\rho}(x)}|(u, v)|&\geq \sup_{\mathbb B_{2\rho}(x)}|(u, v)|\geq 2^{N(x,\rho)} \sup_{\mathbb B_{\rho}(x)}|(u, v)|\nonumber \\
&\geq t^{N(x,\rho)(1-\epsilon)} \sup_{\mathbb B_{\rho}(x)}|(u, v)|,
\end{align*}
since $2>t^{1-\epsilon}$ in this case. Then the left hand side of (\ref{overrr}) is shown.

Now we consider $t> 2^{1+\epsilon}$. It is true that
\begin{equation}
\frac{H(x, t \rho)}{(t\rho)^{n-1}}\leq \sup_{\mathbb B_{t\rho}(x)}|(u, v)|^2.
\label{conf1}
\end{equation}
Choose $\epsilon_1=\frac{\epsilon}{500}$. Applying (\ref{compare}) by considering the doubling index in $\mathbb B_\rho(x)$, we obtain that
\begin{equation}
I(2\rho(1+\epsilon_1))\geq \frac{N(x,\rho)+C\log_2\epsilon_1}{1+50\epsilon_1}.
\end{equation}
From the monotonicity of $H(r)$ in Corollary \ref{coro2} and last inequality, we get that
\begin{equation}
H(x, t\rho)\geq H(x, 2\rho(1+\epsilon_1))\Big(\frac{t}{2(1+\epsilon_1)}\Big)^{\frac{2N(x,\rho)}{(1+50\epsilon_1)(1+\epsilon_1)}+C\log_2\epsilon_1}.
\label{conf2}
\end{equation}
 Note that $t> 2^{1+\epsilon}$ implies that $t>2(1+\epsilon_1)$. Furthermore, the estimates (\ref{end1}) and the definition of the doubling index  yield that
\begin{align}
H(x, 2\rho(1+\epsilon_1))&\geq C\epsilon_1^n \rho^{n-1}\sup_{\mathbb B_{2\rho}(x)}|(u, v)|^2 \nonumber\\
&=C2^{2N(x,\rho)}\epsilon_1^n \rho^{n-1} \sup_{\mathbb B_{\rho}(x)} |(u, v)|^2.
\label{conf3}
\end{align}

In view of (\ref{conf1}), (\ref{conf2}) and (\ref{conf3}), we arrive at
\begin{equation}
\sup_{\mathbb B_{t\rho}(x)} |(u, v)|\geq C 2^{N(x,\rho)}\epsilon_1^n t^{-\frac{n-1}{2}} \Big(\frac{t}{2(1+\epsilon_1)}\Big)^{\frac{N(x,\rho)}{(1+100\epsilon_1)}+C\log_2\epsilon_1}\sup_{\mathbb B_{\rho}(x)} |(u, v)|.
\label{lei1}
\end{equation}
Note that
\begin{align}
t^{\frac{N(x,\rho)}{(1+100\epsilon_1)}} &\geq t^{N(x,\rho)(1-\frac{\epsilon}{2})}\geq t^{N(x,\rho)(1-\epsilon)} 2^{\frac{N(x,\rho)}{2}\epsilon   } \nonumber \\
&\geq t^{N(x,\rho)(1-\epsilon)} (1+\epsilon_1)^{\frac{N(x,\rho)}{1+100\epsilon_1}},
\label{lei2}
\end{align}
since $ 2^{{\epsilon}/{2}}\geq 1+\frac{\epsilon}{50}$.

Notice that $\epsilon_1^n> t^{C\log_2\epsilon_1}$. From (\ref{lei1}) and (\ref{lei2}), we deduce that
\begin{equation}
\sup_{\mathbb B_{t\rho}(x)}|(u, v)|\geq t^{N(x,\rho)(1-\epsilon)+C\log_2\epsilon} \sup_{\mathbb B_{\rho}(x)}|(u, v)|.
\end{equation}
We may choose a smaller $\epsilon$ so that
\begin{equation}
\sup_{\mathbb B_{t\rho}(x)}|(u, v)|\geq t^{N(x,\rho)(1-2\epsilon)+C\log_2(2\epsilon)} \sup_{\mathbb B_{\rho}(x)}|(u, v)|.
\end{equation}
Let $N_0=\frac{C\log_2(2\epsilon)}{\epsilon}$. Furthermore, if $N> N_0$, we get that
$$\sup_{\mathbb B_{t\rho}(x)}|(u, v)|\geq t^{N(x,\rho)(1-\epsilon)} \sup_{\mathbb B_{\rho}(x)}|(u, v)|.$$
This completes the proof of left hand side of (\ref{overrr}) and (\ref{need1}).

By the similar strategy, we can show that there exists $R$ such that
\begin{equation}
\sup_{\mathbb B_{t\rho}(x)}|(u, v)|\leq t^{N(x,t\rho)(1+\epsilon)-C\log_2\epsilon} \sup_{\mathbb B_{\rho}(x)}|(u, v)|
\label{overr}
\end{equation}
for $t\rho<R$ and any $x\in \mathbb B_R$ with $\mathbb B_{t\rho}(x)\subset \mathbb B_R$.
Furthermore, there exists $N_0$ such that if $N(x, \rho)\geq N_0$, then
\begin{equation}
\sup_{\mathbb B_{t\rho}(x)}|(u, v)|\leq t^{N(x,t\rho)(1+\epsilon)} \sup_{\mathbb B_{\rho}(x)}|(u, v)|.
\label{need2}
\end{equation}
Thus, we arrive at the right hand side of (\ref{overrr}) and (\ref{need1}). Therefore,
the proof of the lemma is completed.
\end{proof}

Proceeding  as the argument in \cite{Lo} and using (\ref{need1}), we can compare doubling index at nearby points.
\begin{lemma}
There exist $R$ and $N_0$ such that for any points $x_1, x_2 \in \mathbb B_r$ and $\rho$ such that $N(x_1, \rho)>N_0$ and $d(x_1, x_2)<\rho<R$
, there exists $C$ such that
\begin{equation}
N(x_2, C\rho)>\frac{99}{100} N(x_1, \rho).
\end{equation}
\label{lemcom}
\end{lemma}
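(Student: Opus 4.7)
The plan is to transport the growth rate at $x_1$ to $x_2$ via ball inclusions, and then exploit the almost monotonicity of $H(x_1,\cdot)/r^{n-1}$ (Corollary~\ref{coro2}) together with the sup-to-$H$ bounds (\ref{end1})--(\ref{end2}) and the $N$--$I$ comparison (\ref{compare}). Since $d(x_1,x_2)<\rho$, the inclusions $\mathbb B_{(2C-1)\rho}(x_1)\subset\mathbb B_{2C\rho}(x_2)$ and $\mathbb B_{C\rho}(x_2)\subset\mathbb B_{(C+1)\rho}(x_1)$ are immediate, and the definition of $N$ gives
\begin{equation*}
2^{N(x_2,C\rho)}\geq \frac{\sup_{\mathbb B_{(2C-1)\rho}(x_1)}|(u,v)|}{\sup_{\mathbb B_{(C+1)\rho}(x_1)}|(u,v)|},
\end{equation*}
reducing the lemma to bounding from below a sup-ratio on concentric balls around $x_1$.

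Next I would use (\ref{end1}) and (\ref{end2}) to replace each sup-norm by $H(x_1,\cdot)$, and apply Corollary~\ref{coro2} with $r_1=(1+\epsilon)^2(C+1)\rho$ and $r_2=(1-\epsilon)(2C-1)\rho$. For $\epsilon$ small and $C$ sufficiently large one has $r_1<r_2$, and the quotient $r_2/r_1$ approaches $2(1-\epsilon)/(1+\epsilon)^2$ as $C\to\infty$, whose base-$2$ logarithm can be made as close to $1$ as desired. This chain yields a bound of the form
\begin{equation*}
\frac{\sup_{\mathbb B_{(2C-1)\rho}(x_1)}|(u,v)|}{\sup_{\mathbb B_{(C+1)\rho}(x_1)}|(u,v)|}\geq c_\epsilon \Big(\tfrac{r_2}{r_1}\Big)^{(1+\epsilon)^{-1}I(x_1,r_1)-C_1/2}.
\end{equation*}
To pass from $I(x_1,r_1)$ back to $N(x_1,\rho)$, I would apply Proposition~\ref{monoto} giving $I(x_1,r_1)\geq (I(x_1,2\rho(1+\epsilon_1))-C)/(1+\epsilon)$, together with the right-hand side of (\ref{compare}) giving $I(x_1,2\rho(1+\epsilon_1))\geq (N(x_1,\rho)+C\log_2\epsilon_1)/(1+50\epsilon_1)$. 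Combining these and taking $\log_2$ of the displayed inequality produces an estimate of the shape $N(x_2,C\rho)\geq N(x_1,\rho)\,\log_2(r_2/r_1)\cdot(1-O(\epsilon))-C_\epsilon$. Note that even if $N(x_1,(C+1)\rho)$ happens to be much larger than $N(x_1,\rho)$, this only makes $I(x_1,r_1)$ and hence the right side still larger, so no upper bound on $N$ at the larger radius is required.

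The main obstacle is the bookkeeping: one must fix $\epsilon$ small and then $C$ large enough so that the multiplicative factor $\log_2(r_2/r_1)\cdot(1-O(\epsilon))$ strictly exceeds $99/100$, and afterwards choose $N_0=N_0(\epsilon,C,\mathcal{M})$ large enough for the additive error $C_\epsilon$ to be absorbed into $\tfrac{99}{100}N(x_1,\rho)$ whenever $N(x_1,\rho)\geq N_0$. With such a choice the stated inequality $N(x_2,C\rho)>\tfrac{99}{100}N(x_1,\rho)$ follows.
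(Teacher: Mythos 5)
The paper does not actually write out a proof of this lemma---it defers with the phrase ``Proceeding as the argument in \cite{Lo} and using (\ref{need1}).''  Your blind proof is a correct fill-in, and it uses precisely the ingredients the paper has assembled in Section~2: the sup-to-$H$ comparisons (\ref{end1})--(\ref{end2}), the integrated-$H$ monotonicity of Corollary~\ref{coro2}, the almost monotonicity of $I$ from Proposition~\ref{monoto}, and the $N$--$I$ bracketing (\ref{compare}).  The opening ball inclusions $\mathbb B_{(2C-1)\rho}(x_1)\subset\mathbb B_{2C\rho}(x_2)$ and $\mathbb B_{C\rho}(x_2)\subset\mathbb B_{(C+1)\rho}(x_1)$ are exactly right and correctly reduce the claim to a concentric sup-ratio around $x_1$.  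The numerology also works out: since $r_1<r_2$ requires $C(1-4\epsilon-\epsilon^2)>2+O(\epsilon)$, and since $\log_2(r_2/r_1)\to 1+\log_2\frac{1-\epsilon}{(1+\epsilon)^2}$ as $C\to\infty$, the factor can be pushed above $99/100$ once $\epsilon$ is fixed small (the paper already works with $\epsilon<10^{-8}$, which makes the $(1\pm 50\epsilon_1)$ factors in (\ref{compare}) harmless), and then $N_0$ absorbs the $\epsilon$-dependent additive error.

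Two remarks worth noting.  First, you correctly identified the subtle point that a naive application of the right-hand side of (\ref{need1}) at radius $(C+1)\rho$ would introduce $N(x_1,(C+1)\rho)$ in the denominator, for which no upper bound is available; your decision to descend to the frequency function and bound $I(x_1,r_1)$ from below directly (which only improves if $I$ is large at the big radius) neatly sidesteps this.  In fact the paper's own suggestion to ``use (\ref{need1})'' is a bit misleading for exactly this reason, since (\ref{need1}) as stated requires $t>2$ while the effective ratio $(2C-1)/(C+1)$ is always strictly less than $2$; unwinding back to Corollary~\ref{coro2}, as you do, is the cleaner route.  Second, a small bookkeeping gap: to invoke (\ref{end1})--(\ref{end2}) and Corollary~\ref{coro2} at the radii $r_1,r_2$ one needs $C\rho$ (and hence $r_2$) to stay below the global radius where the frequency-function machinery is valid, so the constant $R$ in the statement should be taken of the form $R_0/C$ with $R_0$ the radius from Section~2; this is implicit in your argument but should be said.
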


\section{Nodal sets of bi-Laplace equations }

Let $n\geq 3$ in this section. After those preparations, we follow the new combinatorial argument in the seminal work of \cite{Lo} in this section.
Let $x_1, x_2, \cdots, x_{n+1}$ be the vertices of a simplex $S$ in $\mathbb R^n$. Denote $\diam(S)$ as the diameter of the simplex $S$. We use $width(S)$ to denote the minimum distance between two parallel hyperplanes that contain $S$. The symbol $w(S)$ is defined as the relative width of $S$:
$$ w(S)=\frac{width(S)}{diam(S)}. $$
We assume $w(S)>\gamma$ for some constant $\gamma$. In particular, $x_1, x_2, \cdots, x_{n+1}$ are assumed not to be on the same hyperplane. We denote $x_0$ as the barycenter of $S$, i.e. $x_0=\frac{1}{n+1}\sum^{n+1}_{i=1} x_i$. Roughly speaking, next lemma shows that the doubling index will accumulate at the barycenter of the simplex if the doubling index at the vertices $\{x_1, x_2, \cdots, x_{n+1}\}$ are large. Using the Lemma \ref{cann} on frequency function and Logunov's arugment. The following lemma holds.
\begin{lemma}
Let $\mathbb B_i$ be balls centered at $x_i$ with radius less than $\frac{K diam(S)}{2}$ for some $K$ depending only on $\gamma$, $i=1, 2, \cdots, n+1$. There exist positive constants $c=c(\gamma, n)$, $C=C(\gamma, n)\geq K$, $r=r(\gamma)$ and $N_0=N_0(\gamma)$ such that if $S\subset \mathbb B_r$ and $N(\mathbb B_i)>N$ with $N>N_0$ for each $i$, $i=1, 2, \cdots, n+1$, then
$$N(x_0, C\diam(S))>(1+c)N.  $$
\label{simplex}
\end{lemma}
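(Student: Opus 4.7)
The plan is to adapt Logunov's simplex lemma from \cite{Lo} to the elliptic system (\ref{system1}), using the growth estimate of Lemma \ref{cann} and the nearby-point comparison of Lemma \ref{lemcom} as the only analytic inputs. Set $d=\diam(S)$ and write $M(y,\rho)=\sup_{\mathbb B(y,\rho)}|(u,v)|$. After rescaling so that $d$ is of unit order and $S\subset\mathbb B_r$ with $r$ small, the hypothesis $N(\mathbb B_i)>N$ reads $M(x_i,2r_i)>2^N M(x_i,r_i)$ with $r_i\le Kd/2$, and the second (large-$N$) form of Lemma \ref{cann} upgrades this at each vertex to
$$
M(x_i,t r_i)\ge t^{N(1-\epsilon)}M(x_i,r_i)\qquad(t>2,\ tr_i<R).
$$

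First I would establish a lower bound on $M(x_0,2Cd)$ by picking the vertex $x_{i_*}$ that maximizes $M(x_i,r_i)$. Since $|x_0-x_{i_*}|\le d$, the ball $\mathbb B(x_0,2Cd)$ contains $\mathbb B(x_{i_*},(2C-1)d)$, so applying the growth estimate above at $x_{i_*}$ with $t=(2C-1)d/r_{i_*}$ gives
$$
M(x_0,2Cd)\ge \Bigl(\tfrac{(2C-1)d}{r_{i_*}}\Bigr)^{N(1-\epsilon)}M(x_{i_*},r_{i_*}).
$$
For $C$ chosen large in terms of $\gamma$ (so that $(2C-1)/(K/2)$ is much larger than $2$), this lower bound will exceed $2^{(1+c)N}M(x_0,Cd)$ once the complementary upper bound on $M(x_0,Cd)$ described next is in hand.

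The main work is the upper bound on $M(x_0,Cd)$, and this is where the hypothesis $w(S)>\gamma$ is indispensable: it guarantees that the $n+1$ vertices are not contained in any hyperplane slab of relative width $<\gamma$, so they are geometrically well spread around $x_0$. Following the combinatorial mechanism of \cite{Lo}, I would argue by contradiction, assuming $N(x_0,Cd)\le(1+c)N$ for a small $c=c(\gamma,n)$. Then Lemma \ref{cann} applied in the upper direction at $x_0$ bounds $M(x_0,Cd)$ from above in terms of $M(x_0,\rho_0)$ at a sufficiently small scale $\rho_0$, and Lemma \ref{lemcom} propagates this bound to balls around each vertex $x_i$, producing $N(x_i,C'\rho_0)\le(1+c')N$ with $c'<1$ uniform in $\gamma$. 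The fatness of $S$ is what forces this bound to hold at \emph{every} vertex simultaneously; combined with the hypothesis $N(\mathbb B_i)>N$, a careful choice of scales relative to $\gamma$ produces the desired amplification.

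The principal obstacle is this last step: the combinatorial/geometric bookkeeping that converts vertex-wise doubling into strict amplification at the barycenter and identifies the correct dependence of $K$, $C$, $r$, $N_0$ and $c=c(\gamma,n)$ on $\gamma$ and $n$. The transfer from the harmonic setting of \cite{Lo} to the coupled system (\ref{system1}) is essentially mechanical, since all analytic ingredients — the growth comparisons of Lemma \ref{cann} and the nearby-point comparison of Lemma \ref{lemcom} — have already been set up for $(u,v)$; the chain of inequalities and the propagation along the vertices of $S$ can therefore be executed exactly as in the harmonic case, with the width hypothesis $w(S)>\gamma$ serving as the sole source of the gain $1+c$ over $N$.
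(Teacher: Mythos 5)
The paper offers no proof of this lemma; it states only that the result follows ``using Lemma \ref{cann} on frequency function and Logunov's argument,'' delegating the combinatorial step entirely to \cite{Lo}. Your proposal is therefore an attempt to fill that gap, and it has a genuine defect in precisely the place you yourself flag.

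Your lower bound on $M(x_0, 2Cd)$ via a single vertex and the growth estimate (\ref{need1}) is fine as far as it goes. But a matching estimate on $M(x_0,Cd)$ that beats the trivial ball inclusions --- which is the entire content of the lemma --- is not supplied. Your sketch proposes to argue by contradiction, assuming $N(x_0, Cd)\le(1+c)N$, and then to use Lemma \ref{lemcom} to ``propagate this bound to balls around each vertex.'' That step cannot work: Lemma \ref{lemcom} propagates \emph{lower} bounds on the doubling index (from $N(x_1,\rho)>N_0$ it deduces $N(x_2, C\rho) > \frac{99}{100}N(x_1,\rho)$); it gives no way to carry an \emph{upper} bound from $x_0$ to the vertices. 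And even granting the step, a conclusion of the form $N(x_i, C'\rho_0)\le(1+c')N$ with $0<c'<1$ is not in conflict with $N(\mathbb B_i)>N$, so there is no contradiction to extract.

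More fundamentally, you identify $w(S)>\gamma$ as the source of the gain $1+c$, but you never use it. In the Logunov argument the width hypothesis enters as a concrete geometric fact about the barycenter: since $\sum_i (x_i-x_0)=0$ and every slab containing $S$ has width at least $\gamma\diam(S)$, for \emph{any} unit direction $e$ there is a vertex $x_j$ with $\langle x_j-x_0, e\rangle \le -c(\gamma,n)\diam(S)$. One applies this with $e$ pointing toward the point of a sphere about $x_0$ where the sup is attained; the existence of a vertex strictly on the far side produces a quantitative gap in the radius when balls centered at $x_j$ are compared, and feeding that gap into the two-sided growth estimate (\ref{need1}) is what pushes the doubling index past $N$ by a factor $1+c$. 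Nothing of this kind appears in your sketch: you note that the vertices are ``well spread around $x_0$'' but do not convert this into an inequality that can be combined with Lemma \ref{cann}, and without that conversion the lemma is not proved.
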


We introduce the doubling index of the cube $Q$. For a given cube $Q$, define the doubling index $N(Q)$ as
$$ N(Q)=\sup_{x\in Q, \ r\in(0, diam(Q))} N(x, r). $$
The doubling index of the cube $N(Q)$ is more convenient in applications. Obviously, if a cube $q\subset Q$, then $N(q)\leq N(Q)$.
If a cube $q\subset \cup_i Q_i$ with $\diam(Q_i)\geq \diam q$, then $N(Q_i)\geq N(q)$ for some $Q_i$.

Based on the propagation of smallness of the Cauchy data in Lemma \ref{halfspace} and the arguments in \cite{Lo} or \cite{GR}, for the completeness of presentation, we can show the following lemma. Roughly speaking, it asserts that if a set of sub-cubes
with intersection with a hyperplane all have a large doubling index, then the original cube that contains those sub-cubes at least have double doubling index.
\begin{lemma}
Let $Q$ be a cube $[-R, \ R]^n$ in $\mathbb R^n$. Divide $Q$ into $(2A+1)^n$ equal subcubes $q_i$ with side length $\frac{2R}{2A+1}$. Let $\{q_{i, 0}\}$ be the subcubes with nonempty intersection with the hyperplane $\{x_n=0\}$. For each $q_{i, 0}$, there exist some point $x_i \in q_{i, 0}$ and $r_i<10 \diam(q_{i, 0})$ such that $N(x_i, r_i)>N$, where $N$ is a large fixed number. The following property holds:
If $A>A_0$, $R<R_0$ and $N>N_0$ for some $A_0$, $R_0$ and $N_0$, then
$$N(Q)\geq 2N.   $$
\label{lemdou}
\end{lemma}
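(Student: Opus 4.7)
The proof will go by contradiction: assume $N(Q) < 2N$ and normalize so that $\sup_{\frac{1}{2}Q}|(u,v)| = 1$. The idea is that the large pointwise doubling indices on the slab of subcubes touching $\{x_n=0\}$ force $(u,v)$ to be very small on that slab; propagation of smallness of Cauchy data (Lemma \ref{halfspace}) then forces $(u,v)$ to be small on a thick interior subdomain; and the assumed upper bound $N(Q)<2N$, via Lemma \ref{cann}, prevents $\sup_{\frac{1}{2}Q}|(u,v)|$ from exceeding $\sup_K|(u,v)|$ by too large a factor, yielding a contradiction once $A$ is large enough.

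Step 1 (smallness on the slab). For each $q_{i,0}$, the hypothesis $N(x_i,r_i)>N$ together with the lower bound in Lemma \ref{cann} gives $\sup_{\mathbb B_{tr_i}(x_i)}|(u,v)|\ge t^{N(1-\epsilon)}\sup_{\mathbb B_{r_i}(x_i)}|(u,v)|$ for $t>2$. Choosing $tr_i$ of order $R$ and using the global bound $\sup_Q|(u,v)|\le 1$ yields $\sup_{\mathbb B_{r_i}(x_i)}|(u,v)|\le (R/r_i)^{-N(1-\epsilon)}$. When $r_i\ll\diam(q_{i,0})$ this only controls a tiny ball, so Lemma \ref{lemcom} is invoked to transfer the large doubling index to a point near the center of $q_{i,0}$ at a scale comparable to $\diam(q_{i,0})$, at the cost of replacing $N$ by $\tfrac{99}{100}N$. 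The outcome is $\sup_{q_{i,0}}|(u,v)|\le A^{-cN}$ for some absolute $c>0$.

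Step 2 (Cauchy data and propagation). The union $\bigcup_i q_{i,0}$ is a slab of thickness $\sim R/A$ enclosing a large portion of $\{x_n=0\}\cap Q$. Interior gradient estimates for the elliptic system (\ref{system1}), valid since $\|\bar W\|_\infty\le 1$, upgrade the pointwise slab smallness to smallness of the full Cauchy data $(u,\partial_n u,v,\partial_n v)$ on a planar set $\Sigma\subset\{x_n=0\}\cap\frac{1}{2}Q$, of the same order $A^{-c'N}$. Lemma \ref{halfspace} then propagates this smallness inward to yield $\sup_K|(u,v)|\le A^{-c''N}$ on some compactly contained interior set $K\subset Q$, with $c''>0$ depending only on the geometry.

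Step 3 (contradiction). From $N(Q)<2N$, Lemma \ref{cann} gives the upper bound $\sup_{\mathbb B_{t\rho}(x)}|(u,v)|\le t^{2N(1+\epsilon)}\sup_{\mathbb B_\rho(x)}|(u,v)|$ for every $x\in Q$. A bounded number (independent of $A,N$) of such doublings chains from a ball inside $K$ to a ball covering $\frac{1}{2}Q$, producing $1=\sup_{\frac{1}{2}Q}|(u,v)|\le 2^{C_0 N}\sup_K|(u,v)|\le 2^{C_0 N}A^{-c''N}$. Taking $A>A_0$ large enough that $c''\log_2 A>C_0$ is a contradiction, and hence $N(Q)\ge 2N$. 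The main obstacle is Step 1: the radius $r_i$ at which the large doubling index holds is only constrained by $r_i<10\diam(q_{i,0})$ and could be arbitrarily small, so the doubling at $(x_i,r_i)$ controls only a possibly microscopic ball. Lemma \ref{lemcom} is precisely what promotes this pointwise, small-scale hypothesis to the uniform slab smallness required for propagation, with only a constant-factor loss in $N$; calibrating the interior, propagation-of-smallness and doubling losses so that $A^{-c''N}$ beats $2^{C_0 N}$ is the quantitative heart of the argument and is where the hypothesis $A>A_0$ is used.
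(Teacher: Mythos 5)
Your outline tracks the paper's proof: pointwise smallness on the slab of subcubes from the doubling hypothesis, control of the Cauchy data on the hyperplane via interpolation and trace/elliptic estimates, propagation of smallness via Lemma~\ref{halfspace}, and then a chain of doublings forcing $N(Q)\geq 2N$. Steps~2 and~3 of your proposal match the paper's argument in substance.

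The gap is in Step~1, in how you handle the case $r_i\ll\diam q_{i,0}$. You correctly flag the danger that $N(x_i,r_i)>N$ only constrains a microscopic ball when $r_i$ is tiny, but Lemma~\ref{lemcom} cannot repair it. That lemma requires $d(x_1,x_2)<\rho$ with $\rho=r_i$; when $r_i\ll\diam q_{i,0}$ the center of $q_{i,0}$ can be at distance $\sim\tfrac12\diam q_{i,0}\gg r_i$ from $x_i$, so the hypothesis simply fails. Moreover the conclusion $N(x_2,C r_i)>\tfrac{99}{100}N$ is still at scale $Cr_i$ with $C$ a fixed universal constant, so any attempt to iterate up to scale $\diam q_{i,0}$ requires an unbounded number of applications, compounding the factor $\tfrac{99}{100}$ without control. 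The correct mechanism is the almost-monotonicity of the frequency, Proposition~\ref{monoto}, combined with the two-sided comparison \eqref{compare}: from $N(x_i,r_i)>N$ one gets $I(x_i,2r_i(1+\epsilon_1))\gtrsim N$, almost-monotonicity gives $I(x_i,s)\gtrsim N$ for every $s\in[2r_i(1+\epsilon_1),r_0]$, and hence $N(x_i,\rho)\gtrsim N$ for all $\rho\geq 2r_i$. Taking $\rho\sim\diam q_{i,0}$ and applying the lower bound in Lemma~\ref{cann} with $t\rho\sim 1/8$ then yields the required slab bound $\sup_{4q_{i,0}}|(u,v)|\lesssim 2^{-cN\log A}M_0$. (The paper's own proof compresses this step into a citation of ``the doubling lemma''; the frequency monotonicity is being used implicitly there too, so the step deserves the explicit justification you are reaching for, just with a different tool than Lemma~\ref{lemcom}.)
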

\begin{proof}
By scaling, we may assume that $R=\frac{1}{2}$ and $R_0>\frac{1}{2}$. Let $\mathbb B$ be the unit ball.
Let $\sup_{\frac{1}{4}\mathbb B}|(u, v)|=M_0$, we have $$\sup_{\mathbb B_{1/8}(x_i)}|(u, v)|\leq M_0$$ if $x_i\in \frac{1}{8}\mathbb B$ since
$ \mathbb B_{1/8}(x_i)\subset \frac{1}{4}\mathbb B$. From the assumption $N(x_i, r_i)\geq N$ and doubling lemma, we get
\begin{align}
\sup_{  4 q_{i, 0}}|(u, v)|&\leq \sup_{\mathbb B_\frac{ 8\sqrt{n}}{2A+1}(x_i)} |(u, v)|\leq C \sup_{\mathbb B_{1/8}(x_i)}|(u, v)|(\frac{64\sqrt{n}}{2A+1})^{\frac{N}{2}} \nonumber \\
&\leq 2^{-CN\log A} M_0,
\label{useback}
\end{align}
where the constants $N$ and $A$ are assumed to be large.
The following interpolation inequality is known, e.g. \cite{BL},
\begin{align}
\|\nabla f\|_{L^2(\mathbb R^{n-1})}\leq C( \|f\|_{W^{2,2}(\mathbb R^n)}+ \|f\|_{L^2(\mathbb R^{n-1})})
\label{interpo}
\end{align}
for any $f\in W^{2,2}(\mathbb R^n)$. By replacing $f$ by $\psi (u, v)$, where $\psi$ is a smooth cut-off function with $\psi=1$ in $\mathbb B_r$ and $\psi=0$ outside $\mathbb B_{2r}$, we obtain that
\begin{align}
\|\nabla (u, v)\|_{L^2(\mathbb R^{n-1}\cap\mathbb B_r)}\leq C\big(\| (u, v)\|_{W^{2,2}(\mathbb B_{2r})}+ \|(u, v)\|_{L^2(\mathbb R^{n-1}\cap \mathbb B_{2r})}\big).
\label{lastine}
\end{align}
Let $\tilde{\Gamma}=\frac{1}{8} \mathbb B\cap \{x_n=0\}$. The last inequality, trace inequalities and elliptic estimates yield that
\begin{align}
\|\nabla(u, v)\|_{L^2(\tilde{\Gamma}\cap q_{i,0})}&\leq C(2A+1)^2( \|(u, v)\|_{W^{2,2}(2q_{i,0})}+ \|(u, v)\|_{L^2(\tilde{\Gamma}\cap 2q_{i,0})} ) \nonumber \\
&\leq C(2A+1)^4 \|(u, v)\|_{L^2(4q_{i,0})}.
\end{align}
Using the trace inequality and elliptic estimates again, we obtain that
\begin{align}
\|(u, v)\|_{W^{1,2}(\tilde{\Gamma}\cap q_{i,0})}+\|\frac{\partial(u, v)}{\partial n}\|_{L^2(\tilde{\Gamma}\cap q_{i,0})}&\leq C(2A+1)
\|(u, v)\|_{W^{2,2}(\tilde{\Gamma}\cap 2q_{i,0})}+ \|\nabla (u, v)\|_{L^2(\tilde{\Gamma}\cap q_{i,0})} \nonumber \\
&\leq C (2A+1)^4 \| (u, v)\|_{L^2(3 q_{i,0})} \nonumber \\
&\leq C\frac{ (2A+1)^4 }{ (2A+1)^\frac{n}{2}}  \| (u, v)\|_{L^\infty(4 q_{i,0})}.
\end{align}
Summing up all the cubes $q_{i,0}$ with intersection with $\tilde{\Gamma}$, the last inequality yields that
\begin{align}
\|(u, v)\|_{W^{1,2}(\tilde{\Gamma})}+\|\frac{\partial(u, v)}{\partial n}\|_{L^2(\tilde{\Gamma})}&\leq C (2A+1)^{\frac{n}{2}+3}\| (u, v)\|_{L^\infty(4 q_{i,0})}  \nonumber \\
&\leq e^{-CN\log A}M_0,
\end{align}
where we used (\ref{useback}) in the second inequality. Note that $\|(u,v)\|_{L^2( \frac{1}{4}\mathbb B^+)}\leq CM_0$. By scaling and using the propagation of smallness Lemma \ref{halfspace} in Section 7, we have
\begin{align}
\|(u,v)\|_{L^2( 2^{-10}\mathbb B^+)}\leq  e^{-CN\log A} M_0.
\end{align}
We select a ball $\mathbb B_{2^{-11}}(p)\subset {2^{-10}}\mathbb B^+$.  Thus, by elliptic estimates,
\begin{align}
\|(u,v)\|_{L^\infty( \mathbb B_{2^{-12}}(p))}\leq e^{-CN\log A} M_0.
\end{align}
By the fact that $\|(u, v)\|_{L^\infty (\mathbb B_{\frac{1}{4}}(p))}\geq M_0$, we derive that
\begin{align}
\frac{ \|(u, v)\|_{L^\infty (\mathbb B_{\frac{1}{4}}(p))}} {\|(u,v)\|_{L^\infty( \mathbb B_{2^{-12}}(p))}}\geq e^{CN \log A}.
\end{align}
The doubling lemma gives that
\begin{align}
\frac{ \|(u, v)\|_{L^\infty (\mathbb B_{\frac{1}{4}}(p))}} {\|(u,v)\|_{L^\infty( \mathbb B_{2^{-12}}(p))}  }\leq  (2^{10})^{\tilde{N}},
\end{align}
where $\tilde{N}$ is the doubling index in $\mathbb B_{\frac{1}{4}}(p)$. Therefore,
\begin{align}
\tilde{N}\geq 2N
\end{align}
if $A$ is large enough.
\end{proof}

Following the arguments in \cite{Lo}, the following lemma holds.
\begin{lemma}
If $Q$ is partitioned into $A^n$ equal sub-cubes, where $A$ depends on $n$, then the number of sub-cubes with doubling index greater than
$\max\{ \frac{N(Q)}{1+c}, \  N_0\}$ is less than $\frac{1}{2}A^{n-1}$ for some $c$ depending $n$ and some fixed constant $N_0$.
\label{lem12}
\end{lemma}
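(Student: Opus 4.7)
The plan is to argue by contradiction, following the combinatorial scheme of Logunov \cite{Lo}. Suppose the conclusion fails, so there is a collection $\mathcal{C}$ of at least $\frac{1}{2}A^{n-1}$ sub-cubes of the partition, each satisfying
\[
N(q) > N := \max\!\left\{\frac{N(Q)}{1+c},\ N_0\right\},
\]
where the dimensional constants $c = c(n)$ and $N_0 = N_0(n)$ will be pinned down at the end of the argument. The goal is to produce a ball centered in $Q$ on which the doubling index exceeds $N(Q)$, which is absurd.

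The first step is purely combinatorial and is where the quantitative dependence of $A$ on $n$ enters. I claim that for $A$ large enough in terms of $n$, any collection of at least $\frac{1}{2}A^{n-1}$ cells in the $A^n$-grid contains $n+1$ cells whose centers $x_1,\ldots,x_{n+1}$ form a simplex $S$ with relative width $w(S) \geq \gamma$ for some dimensional constant $\gamma = \gamma(n) > 0$. I would prove this via a greedy inductive selection: once $k \leq n$ vertices with affine span $F_{k-1}$ have been chosen, any further cell whose center would force $w(S) < \gamma$ must lie in a $\gamma\,\diam(Q)$-neighbourhood of $F_{k-1}$; an elementary volumetric count shows this neighbourhood meets at most $C(n)\gamma A^{n-1}$ cells of the partition, and choosing $\gamma$ small enough (in terms of $n$) makes this strictly less than $\frac{1}{2}A^{n-1}$, so a further vertex is always available.

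The second step is to apply Lemma \ref{simplex} to $S$. For each vertex $x_i$, the inequality $N(q_i) > N$ together with the definition of the cube doubling index produces a point $y_i \in q_i$ and a radius $r_i < \diam(q_i)$ with $N(y_i, r_i) > N$. Since the $q_i$ are distinct cells of the grid, $\diam(S) \geq \diam(q_i)$, so for $A$ large enough the ball $\mathbb B_i := \mathbb B(x_i, K\diam(S)/2)$ appearing in Lemma \ref{simplex} contains $\mathbb B(y_i, r_i)$, forcing $N(\mathbb B_i) > N$. Setting $N_0 := N_0(\gamma)$ and taking $c$ to be the increment supplied by Lemma \ref{simplex}, that lemma gives
\[
N\bigl(x_0,\, C\diam(S)\bigr) > (1+c)\,N,
\]
where $x_0$ is the barycenter of $S$ and $C = C(\gamma, n)$.

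To close the argument, note that $x_0 \in Q$ and $C\diam(S)$ is at most a constant multiple of $\diam(Q)$, so up to the standard convention of defining the cube doubling index over a slight enlargement of $Q$, the left-hand side above is bounded by $N(Q)$. This yields $(1+c)N < N(Q)$, contradicting $N \geq N(Q)/(1+c)$. The main obstacle is the combinatorial extraction of a well-shaped simplex, where the interplay between $A$, $\gamma$, and the slab count has to be balanced; once that geometric fact is in hand, the remainder is a clean application of Lemma \ref{simplex} together with the definition of $N(Q)$.
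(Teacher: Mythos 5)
Your combinatorial extraction step is the crux, and it does not hold as stated. The claim that any $\frac{1}{2}A^{n-1}$ cells of an $A^{n}$ grid contain $n+1$ whose centers span a simplex of relative width at least a dimensional constant $\gamma(n)$ is false: take all $A^{n-1}$ cells in a single coordinate layer (those whose $n$-th index equals a fixed $k$). Their centers all lie on the hyperplane $\{x_n = (k+\frac{1}{2})\cdot \text{side}(Q)/A\}$, so every simplex with these vertices has width zero, and any $\frac{1}{2}A^{n-1}$ cells drawn from that layer defeat your claim. Correspondingly the volumetric count in your greedy step is off by a factor of $A$ at the final stage: the $\gamma\diam(Q)$-neighbourhood of an $(n-1)$-dimensional affine subspace $F_{n-1}$ intersected with $Q$ is a slab of volume $\sim\gamma\diam(Q)^{n}$, hence meets on the order of $C(n)\gamma A^{n}$ cells, not $C(n)\gamma A^{n-1}$. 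Forcing this below $\frac{1}{2}A^{n-1}$ requires $\gamma\lesssim 1/A$, which is incompatible with $\gamma$ being the fixed constant on which $c(\gamma,n)$, $K(\gamma)$ and $N_0(\gamma)$ in Lemma~\ref{simplex} depend.

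The scenario your greedy argument cannot handle — bad cubes clustering along a hyperplane — is exactly the regime in which the paper's hyperplane lemma (Lemma~\ref{lemdou}) must intervene: there the doubling index of $Q$ at least doubles, which is incompatible with $N\geq N(Q)/(1+c)$ for small $c$. Logunov's proof of this counting lemma interleaves Lemma~\ref{simplex} and Lemma~\ref{lemdou} through a more elaborate case analysis (roughly: either a wide simplex exists and the simplex lemma applies, or a large fraction of the bad cubes lie near a hyperplane and the hyperplane lemma applies after a further reduction); the simplex lemma alone cannot close the argument. Two smaller gaps are also worth noting: passing from $N(q_i)>N$ to $N(\mathbb B_i)>N$ is not a containment statement but needs the comparison Lemma~\ref{lemcom} and loses a constant factor, and the parenthetical ``up to the standard convention of defining the cube doubling index over a slight enlargement of $Q$'' hides that $C\diam(S)$ with $C=C(\gamma,n)$ may genuinely exceed $\diam(Q)$, so the final inequality $N(x_0,C\diam(S))\leq N(Q)$ does need to be justified rather than asserted.
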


Now we give the estimates of the nodal set $\{u=v=0\}$ for the elliptic system (\ref{system1}) in a small cube. We show the details of the following proposition.
\begin{proposition}
Let $N_{(u, v)}(Q)$ be the doubling index of the cube $Q$ for the solutions $(u, v)$ in (\ref{system1}). There exist positive constant $r$, $C$ and ${\hat{\alpha}}$ such that for any solutions $(u, v)$ on $\mathcal{M}$ and $Q\subset \mathbb B_r$,
\begin{equation}
H^{n-1}(\{u=v=0\}\cap Q)\leq C diam^{n-1}(Q)N_{(u, v)}^{\hat{\alpha}}(Q),
\label{prove}
\end{equation}
where ${\hat{\alpha}}$ depends only on $n$ and $N_{(u, v)}(Q)$ is the doubling index on $Q$ for the function $(u, v)$.
\label{pro2}
\end{proposition}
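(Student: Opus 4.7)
The plan is to argue by induction on the doubling index of the cube $Q$, following the combinatorial scheme Logunov introduced in \cite{Lo}. Define
$$F(N) = \sup \frac{H^{n-1}(\{u=v=0\}\cap Q)}{\diam^{n-1}(Q)},$$
where the supremum is taken over all cubes $Q \subset \mathbb B_r$ (for a small $r$ to be fixed) whose doubling index satisfies $N_{(u,v)}(Q) \leq N$. The target inequality is equivalent to $F(N) \leq C N^{\hat{\alpha}}$ for all large $N$, plus a harmless bound $F(N_0) \leq C_0$ at a fixed threshold.

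For the base case, when the doubling index is bounded by a fixed $N_0$, the two-sided estimates of Lemma \ref{cann} force the pair $(u,v)$ on concentric dilates of $Q$ to behave, up to uniform constants, like a normalized solution of a scale-invariant elliptic system. Combined with a compactness argument on the smooth compact manifold $\mathcal{M}$ and the Hausdorff-dimension results for higher-order elliptic equations of Han \cite{Han}, this yields a uniform bound $H^{n-1}(\{u=v=0\}\cap Q) \leq C_0 \diam^{n-1}(Q)$, i.e.\ $F(N_0) \leq C_0$.

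For the inductive step, fix $Q$ with $N_{(u,v)}(Q) \leq N$ and partition $Q$ into $A^n$ congruent sub-cubes $q_i$ of side length $\diam(Q)/A$, where $A=A(n)$ is the constant supplied by Lemma \ref{lem12}. By that lemma, at most $\tfrac12 A^{n-1}$ of the $q_i$ are \emph{bad}, meaning $N_{(u,v)}(q_i) > \max\{N/(1+c), N_0\}$; the remaining \emph{good} sub-cubes have doubling index at most $N/(1+c)$. Estimating $H^{n-1}(\{u=v=0\}\cap Q)$ by summing across the partition and inserting the definition of $F$,
$$H^{n-1}(\{u=v=0\}\cap Q) \leq \tfrac12 A^{n-1}\Bigl(\frac{\diam Q}{A}\Bigr)^{n-1} F(N) + A^{n}\Bigl(\frac{\diam Q}{A}\Bigr)^{n-1} F\bigl(N/(1+c)\bigr).$$
Dividing through by $\diam^{n-1}(Q)$ gives $F(N) \leq \tfrac12 F(N) + A\, F(N/(1+c))$, hence the key contraction $F(N) \leq 2A\, F(N/(1+c))$. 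Iterating this $k \sim \log(N/N_0)/\log(1+c)$ times and plugging in the base case yields $F(N) \leq C_0 (2A)^k \leq C N^{\hat{\alpha}}$ with $\hat{\alpha} = \log(2A)/\log(1+c)$, depending only on $n$, which is exactly \eqref{prove}.

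The main obstacle is justifying the absorption step $F(N) \leq \tfrac12 F(N) + \dots \Rightarrow F(N) \leq 2A F(N/(1+c))$, which requires a priori finiteness of $F(N)$. I would handle this by first restricting the supremum in the definition of $F$ to cubes of diameter at least some $\rho > 0$, obtaining a finite quantity $F_\rho(N)$; the same recursion holds for $F_\rho$, producing a polynomial bound uniform in $\rho$, and sending $\rho \to 0$ closes the argument. A secondary technical point is guaranteeing that every sub-cube produced during the iteration stays inside $\mathbb B_r$ so that Lemmas \ref{cann}, \ref{lemdou}, and \ref{lem12} remain available; choosing $r$ sufficiently small at the outset suffices for this.
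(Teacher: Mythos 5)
Your overall strategy is the paper's: define the normalized supremum $F(N)$, partition $Q$ into $A^n$ congruent sub-cubes, invoke Lemma \ref{lem12} to bound the number of ``bad'' sub-cubes by $\tfrac12 A^{n-1}$, derive a contraction of the form $F(N)\leq CA\,F\bigl(N/(1+c)\bigr)$, and iterate down to a base threshold $N_0$. The paper phrases the contraction as a contradiction argument (assume $F(N)>3A\,F(N/(1+c))$ and deduce $N\leq N_0$), whereas you state it directly; these are logically equivalent once $F(N)<\infty$ is known, so this is purely stylistic.

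The genuine gap is in your proposed $F_\rho$-truncation handling of a priori finiteness. When you subdivide a cube with $\diam Q\geq\rho$ into $A^n$ congruent sub-cubes, each sub-cube has diameter $\geq\rho/A$, not $\geq\rho$. So the recursion you actually obtain is
\begin{equation*}
F_\rho(N)\ \leq\ \tfrac12\,F_{\rho/A}(N)\ +\ A\,F_{\rho/A}\bigl(N/(1+c)\bigr),
\end{equation*}
and since $F_{\rho/A}(N)\geq F_\rho(N)$ (more cubes are admissible), the first term on the right cannot be absorbed into the left side. Iterating carries along a $(1/2)^k F_{\rho/A^k}(N)$ term whose limit involves $F(N)$ itself — exactly the quantity whose finiteness is in question — so sending $\rho\to 0$ does not close. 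The paper resolves this by proving $F(N)<\infty$ for every fixed $N$ directly in Lemma \ref{bounf} (Appendix), via a Han--Hardt--Lin style compactness-and-iteration argument that mirrors the proof of Theorem \ref{th4}. You already invoke a compactness argument of this type for the base case $F(N_0)\leq C_0$; the correct fix is to run that same compactness argument for an arbitrary fixed $N$ — which is precisely Lemma \ref{bounf} — after which the $F_\rho$ detour is unnecessary and the absorption step is legitimate.
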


\begin{proof}
Let the cube $Q\subset \mathbb B_r$. For any solutions $(u, v)$ in the elliptic systems (\ref{system1}), we consider those solutions such that $N_{(u, v)}(Q)\leq N$.
Define the function
\begin{equation} F(N)=\sup_{N_{(u, v)}(Q)\leq N}\frac{ H^{n-1}(\{u=v=0\}\cap Q)}{ \diam^{n-1}(Q)}.
\label{defnew} \end{equation}
We are going to show that
$$  F(N)\leq C N^{\hat{\alpha}} $$
for some $\hat{\alpha}$ depending only on $n$,  which provides the proof of the proposition. As shown in \cite{Han} for higher order elliptic equations, the Hausdorff dimension of the  sets $\{D^\nu u=0 \ \mbox{for all} \ |\nu|\leq 2\}$ is not greater than $n-1$. Since $v=\triangle u$, the mixed nodal sets $\{ Q| u=v=0 \}$ is not greater than $n-1$. The Hausdorff dimension of nodal sets  $\{ u=0 \}$ is no more than $n-1$. Such stratification can also be observed in Lemma \ref{haudim} in Section 6. Obviously, the mixed nodal sets $\{ u=v=0 \}$ is subset of the nodal sets $\{u=0\}$. Even if there exists co-dimension one nodal sets $\{u=0\}$ in $Q$, it does not guarantee the existence of co-dimension one mixed nodal sets $\{u=v=0\}$.
 We assume that $u$ and $v$ has the same co-dimension one zero sets in $Q$. Otherwise, $H^{n-1}(\{u=v=0\}\cap Q)=0$, then the proposition follows immediately. If there exist $x_0$ such that $u(x_0)=v(x_0)=0$ in $Q$, then $N_{(u, v)}(Q)\geq 1$. In Lemma \ref{bounf} in the Appendix, we have shown  that $F(N)< \infty$. We claim that if
\begin{equation} F(N)> 3AF(\frac{N}{1+c}),
\label{claim}
\end{equation}
then the set $N\leq N_0$, where the constant $A$, $c$ are those in the last lemma and $N_0$ depends on the manifold $\mathcal{M}$. If  $F(N)$ is almost attained in (\ref{defnew}), then
\begin{equation}
\frac{ H^{n-1}(\{u=v=0\}\cap Q)}{ \diam^{n-1}(Q)}>\frac{5}{6} F(N),
\label{contra}
\end{equation}
where  $N_{(u, v)}(Q)\leq N$. We divide $Q$ into $A^n$ equal subcubes $q_i$, $i=1, 2, \cdots, A^n$, then split $q_i$ into two groups
$$G_1= \{ q_i|\frac{N}{1+c}\leq N(q_i)\leq N\}   $$
and
$$ G_2=\{ q_i|N(q_i)<\frac{N}{1+c} \}.   $$
Thanks to the Lemma \ref{lem12} , we know that the number of subcubes in $G_1$ less than $\frac{1}{2}A^{n-1}$ if $N>N_0$.  We have
\begin{align}
H^{n-1}(\{u=v=0\}\cap Q)&\leq \sum_{q_i\in G_1} H^{n-1}(\{u=v=0\}\cap q_i)+\sum_{q_i\in G_2} H^{n-1}(\{u=v=0\}\cap q_i) \nonumber \\
&\leq |G_1|F(N) \frac{ \diam^{n-1}(Q)}{A^{n-1}}+|G_2| F(\frac{N}{1+c})  \frac{ \diam^{n-1}(Q)}{A^{n-1}} \nonumber \\
&=I_1+I_2,
\end{align}
where $|G_i|$ denotes the number of subcubes in $G_i$.  Since $|G_1|\leq \frac{1}{2} A^{n-1}$, then
\begin{equation}
I_1\leq \frac{1}{2} F(N) \diam^{n-1}(Q).
\label{II1}
\end{equation}
Since (\ref{claim}) holds, it follows that
\begin{equation}
I_2\leq |G_2|\frac{F(N)}{3A} \frac{ \diam^{n-1}(Q)}{A^{n-1}}.
\end{equation}
It is obvious that $|G_2|\leq A^n$. Then
\begin{equation}
I_2\leq \frac{1}{3}F(N)\diam^{n-1}(Q).
\label{II2}
\end{equation}
The combination of (\ref{II1}) and (\ref{II2}) yields that
\begin{equation}
I_1+I_2\leq \frac{5}{6} F(N)\diam^{n-1}(Q),
\end{equation}
which is a contradiction to (\ref{contra}). Therefore, we have shown the claim. That is, if the set $N\geq N_0$, then
\begin{equation} F(N)\leq 3AF(\frac{N}{1+c}).
\label{result}
\end{equation}
Let $\frac{N}{(1+c)^m}=N_0$. We iterate the estimate (\ref{result}) $m$ times to get
\begin{align*}
F(N)&\leq (3A)^m F(\frac{N}{(1+c)^m}) \\
&=(1+c)^{(\log_{1+c} {3A}) (\log_{1+c} \frac{N}{N_0})} F(N_0)\\
&=(\frac{N}{N_0})^{(\log_{1+c} 3A)}F(N_0).
\end{align*}
Thus, we show the conclusion (\ref{prove}) for $N\geq N_0$. If $N\leq N_0$, by the Lemma \ref{bounf}, we obtain that
\begin{equation}
F(N)\leq C(N_0)
\end{equation}
for some $C$ that depends on $N_0$. Therefore,  the proposition is completed.

\end{proof}
With the aid of the upper bound of nodal sets in a small cube in the proposition, we provide the proof of Theorem \ref{th2} for bi-Laplace equations (\ref{bi-Laplace}).
\begin{proof}[ Proof of Theorem \ref{th2}]
By the elliptic regularity estimates, from the doubling inequality in Theorem \ref{th1}, we have the following $L^\infty$ type doubling inequality,
\begin{equation}
\|(u, \triangle u)\|_{L^\infty(\mathbb B_{2r}(x))}\leq e^{ CM^{\frac{2}{3}}}\|(u, \triangle u) \|_{L^\infty(\mathbb B_{r}(x))}
\end{equation}
for any $x\in \mathcal{M}$ and any $0<r<r_0$, where $r_0$ depends only on the manifold $\mathcal{M}$.
From the definition of doubling index in (\ref{defined}) , we know that
$$N(x, r)\leq CM^{\frac{2}{3}}$$
for $M$ sufficiently large and for any $x\in \mathcal{M}$ and $0<r<r_0$. Thus, the doubling index $N(Q)\leq CM^{\frac{2}{3}}$ in the cube $Q$. We consider the reduced elliptic systems (\ref{system1}) of bi-Laplace equations (\ref{bi-Laplace}) in the cube $Q\subset \mathbb B_r$ with $0<r<\frac{r_0}{{M}^{1/4}}$. Note that $v=\triangle u$. From the last proposition, we get
$$ H^{n-1}(\{u=\triangle u =0\}\cap Q)\leq C  M^{\frac{2\hat{\alpha}}{3}-\frac{n-1}{4}}.  $$
Since the manifold $\mathcal{M}$ is compact, we can cover the manifold with $CM^\frac{n}{4}$ number of balls $\mathbb B_r$ with $0<r<\frac{r_0}{{M}^{1/4}}$. Therefore, we arrive at
$$ H^{n-1}(\{u=\triangle u =0\})\leq C  M^{\frac{2\hat{\alpha}}{3}+\frac{1}{4}}.  $$
This gives the proof of Theorem \ref{th2}.
\end{proof}

\begin{remark}
For the 2-dimensional compact smooth manifolds, a polynomial upper bound with explicit power $\alpha$ for the mixed nodal sets $\{x\in\mathcal{M}|u=\triangle u=0\}$ of solutions of bi-Laplace equations \ref{bi-Laplace} might be obtained using the ideas in \cite{DF1} and \cite{LM}. The author hopes to explore it in the future work.
\end{remark}

\section{Carleman estimates }

In this section, we show the doubling inequalities for the bi-Laplace equations (\ref{bi-Laplace}). We use Carleman estimates
to obtain
 some quantitative
type of Hadamard's three balls theorem, then employ the ``propagation of
smallness" argument to get some lower bound of $L^2$ norm of solutions in a small ball. At last, using Carleman estimates and the lower bound of $L^2$ norm of solutions, we obtain the uniform doubling inequality.

For any $x_0\in \mathcal{M}$, let $r=d(x, \ x_0)=r(x)$ be the Riemannian distance from $x_0$ to $x$. $\mathbb B_r(x_0)$ is denoted as the
geodesic ball at $x_0$ with radius $r$. The symbol $\|\cdot\|$ denotes the $L^2$ norm. Specifically, $\|\cdot\|_{\mathbb B_r(x_0)}$ or $\|\cdot\|_r$ for short denotes the $L^2$ norm on the ball $\mathbb B_r(x_0)$.
Our crucial tools to get the doubling inequality are the quantitative
 Carleman estimates. Carleman estimates are weighted integral inequalities with a weight function $e^{\tau\phi}$, where $\phi$ usually satisfies some convex condition. We construct the weight  function $\phi$ as follows.
 Set $$\phi=-g(\ln r(x)),$$ where $g(t)=t-e^{\epsilon t}$ for some small $0<\epsilon<1$ and $-\infty<t <T_0$. The positive constant $\epsilon$ is a fixed small number and $T_0$ is negative with $|T_0|$ large enough. One can check that
 \begin{equation}
 \lim_{t \to -\infty}-e^{-t} g''(t)=\infty \quad \mbox{and} \quad \lim_{t \to -\infty} g'(t)=1.
 \end{equation}
 Such weight function $\phi$ was introduced by H\"ormander in \cite{H}.
 The following Carleman estimates are shown in \cite{Bak}.
There exist positive constant $R_0$, $C$, which depend only on the manifold $\mathcal{M}$ and $\epsilon$, such that, for any $x_0\in  \mathcal{M}$, any $f\in C^\infty_0(\mathbb B_{R_0}(x_0)\backslash \mathbb B_{\delta}(x_0))$ with $0<\delta<R_0$, and $\tau>C$,
 the following Carleman estimate holds
 \begin{align}
 C\| r^2 e^{\tau \phi} \triangle f\|&\geq \tau^{\frac{3}{2}}\|r^{\frac{\epsilon}{2}}e^{\tau\phi} f\|+\tau \delta \|r^{-1} e^{\tau\phi} f\|  \nonumber \\ &+
 \tau^{\frac{1}{2}}\|r^{1+\frac{\epsilon}{2}}e^{\tau\phi} \nabla f\|.
 \label{similar}
\end{align}
Similar type of Carleman estimates without the second term on the right hand side of (\ref{similar}) are well-known in the literature, see e.g. \cite{AKS},  \cite{EV}, \cite{H1}, \cite{K}, to just mention a few. There has been a long and rich history for the development of Carleman estimates. It is hard to provide an exhaustive list for the applications of such estimates. Interested reads may refer to those literature or references therein for more history about such $L^2$ type Carleman estimates.
The Carleman estimates (\ref{similar}) also hold for vector functions. Let $F=(f_1, f_2)$. If $F\in C^\infty_0(\mathbb B_{R_0}(x_0)\backslash \mathbb B_{\delta}(x_0), \mathbb R^2)$, similar arguments as (\ref{similar}) show that
 \begin{align}
 C\| r^2 e^{\tau \phi} \triangle F\|&\geq \tau^{\frac{3}{2}}\|r^{\frac{\epsilon}{2}}e^{\tau\phi} F\|+\tau \delta \|r^{-1} e^{\tau\phi} F\| \nonumber \\ &+
  \tau^{\frac{1}{2}}\|r^{1+\frac{\epsilon}{2}}e^{\tau\phi} \nabla F\|.
 \label{vec}
 \end{align}

 Let $V(x)=\begin{pmatrix}
0, & 1 \\
W(x), & 0
\end{pmatrix}
$.
Following from (\ref{vec}), we can easily establish  the quantitative Carleman estimates for vector functions.
\begin{lemma}
There exist positive constants $R_0$, $C$, which depend only on the manifold $\mathcal{M}$ and $\epsilon$, such that, for any $x_0\in  \mathcal{M}$, $F\in C^\infty_0(\mathbb B_{R_0}(x_0)\backslash \mathbb B_{\delta}(x_0), \mathbb R^2)$ with $0<\delta<R_0$,  and $\tau>C(1+\|V\|_{L^\infty}^{\frac{2}{3}})$, one has
  \begin{align}
 C\| r^2 e^{\tau \phi} (\triangle F-V(x,y)\cdot F)\|&\geq \tau^{\frac{3}{2}}\|r^{\frac{\epsilon}{2}}e^{\tau\phi} F\|+\tau \delta \|r^{-1} e^{\tau\phi} F\| \nonumber \\ &+
  \tau^{\frac{1}{2}}\|r^{1+\frac{\epsilon}{2}}e^{\tau\phi} \nabla F\|.
 \label{Carle}
 \end{align}
 \label{lemma1}
\end{lemma}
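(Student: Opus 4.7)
The plan is to derive \eqref{Carle} as a direct perturbative consequence of the potential-free vector-valued Carleman estimate \eqref{vec}. Since the Laplacian acts component-wise and $V$ only introduces bounded multipliers coupling the two components, \eqref{vec} applies to $F$ as is, and the whole task reduces to absorbing the lower-order term $V\cdot F$ into a term already present on the right-hand side.

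First I would apply the triangle inequality to split
\[
\|r^2 e^{\tau\phi}\triangle F\|\leq \|r^2 e^{\tau\phi}(\triangle F-V\cdot F)\|+\|r^2 e^{\tau\phi}V\cdot F\|,
\]
and bound the potential piece by $\|r^2 e^{\tau\phi}V\cdot F\|\leq \|V\|_{L^\infty}\|r^2 e^{\tau\phi}F\|$. Because $F$ is supported in $\mathbb B_{R_0}(x_0)$ and we are free to choose $R_0<1$, the weight $r^2$ can be factored as $r^{2-\epsilon/2}\cdot r^{\epsilon/2}\leq R_0^{2-\epsilon/2}r^{\epsilon/2}$, which yields
\[
\|r^2 e^{\tau\phi}V\cdot F\|\leq R_0^{2-\epsilon/2}\|V\|_{L^\infty}\|r^{\epsilon/2}e^{\tau\phi}F\|.
\]
Substituting these two bounds into \eqref{vec} produces an inequality in which the error term $C R_0^{2-\epsilon/2}\|V\|_{L^\infty}\|r^{\epsilon/2}e^{\tau\phi}F\|$ sits on the same side as the dominant $\tau^{3/2}\|r^{\epsilon/2}e^{\tau\phi}F\|$, so it can be absorbed provided $C R_0^{2-\epsilon/2}\|V\|_{L^\infty}\leq \tfrac{1}{2}\tau^{3/2}$.

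This absorption condition rearranges to $\tau\geq (2C R_0^{2-\epsilon/2})^{2/3}\|V\|_{L^\infty}^{2/3}$, and when combined with the baseline requirement $\tau>C$ coming from \eqref{vec} it is subsumed by the hypothesis $\tau>C(1+\|V\|_{L^\infty}^{2/3})$ after enlarging the constant to swallow the fixed factor $R_0^{(2-\epsilon/2)\cdot 2/3}$. With the absorption carried out, the surviving $\tfrac{1}{2}\tau^{3/2}\|r^{\epsilon/2}e^{\tau\phi}F\|$ on the right, together with the untouched terms $\tau\delta\|r^{-1}e^{\tau\phi}F\|$ and $\tau^{1/2}\|r^{1+\epsilon/2}e^{\tau\phi}\nabla F\|$, yields exactly \eqref{Carle} after multiplying through by $2$ and relabeling the constant.

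There is no serious analytic obstacle here; the only point worth emphasizing is that the exponent $\tfrac{2}{3}$ in the hypothesis on $\tau$ is dictated precisely by the balance between the linear dependence on $\|V\|_{L^\infty}$ that the triangle inequality forces on the error term and the $\tau^{3/2}$ gain produced on the right of \eqref{vec}. No new weighted integration-by-parts or additional Carleman-style computation is needed — everything is a clean perturbation of the potential-free estimate.
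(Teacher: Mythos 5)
Your proposal is correct and follows essentially the same path as the paper: triangle inequality to peel off $V\cdot F$, the bound $\|r^2 e^{\tau\phi}V\cdot F\|\leq \|V\|_{L^\infty}\|r^2 e^{\tau\phi}F\|$, and absorption into the $\tau^{3/2}\|r^{\epsilon/2}e^{\tau\phi}F\|$ term using $r^2\lesssim r^{\epsilon/2}$ on the support of $F$, with the threshold $\tau^{3/2}\gtrsim 1+\|V\|_{L^\infty}$ enforcing the hypothesis. You simply make the absorption step and the role of $R_0$ more explicit than the paper's terse statement.
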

\begin{proof}
By triangle inequality and the inequality (\ref{vec}), we have
\begin{align}
C\| r^2 e^{\tau \phi} (\triangle F-V(x,y)\cdot F)\|& \geq C\|r^2 e^{\tau \phi} \triangle F\|- C\|r^2 e^{\tau \phi} V(x,y)\cdot F\|\nonumber \\
&\geq  \tau^{\frac{3}{2}}\|r^{\frac{\epsilon}{2}}e^{\tau\phi} F\|+\tau \delta \|r^{-1} e^{\tau\phi} F\| \nonumber \\ &+
  \tau^{\frac{1}{2}}\|r^{1+\frac{\epsilon}{2}}e^{\tau\phi} \nabla F\|-C \|V\|_{L^\infty}\|r^2 e^{\tau \phi}  F\|.
\end{align}
If $\tau$ is chosen to be so large that $\tau^{\frac{3}{2}}\geq C(1+\|V\|_{L^\infty})$, the estimates (\ref{Carle}) can be derived.
\end{proof}

To show the refined doubling inequality in Theorem \ref{th3}, we establish the following Carleman estimates for the bi-Laplace operator involving the potential $W(x)$.
\begin{lemma}
There exist positive constants $R_0$, $C$, which depend only on the manifold $\mathcal{M}$ and $\epsilon$, such that, for any $x_0\in  \mathcal{M}$, any $f\in C^\infty_0(\mathbb B_{R_0}(x_0)\backslash \mathbb B_{\delta}(x_0))$ with $0<\delta<R_0$,  and $\tau>C(1+\|W\|_{L^\infty}^\frac{1}{3})$, one has
 \begin{align}
 C\| r^4 e^{\tau \phi} (\triangle^2 f-W(x)f)\|\geq \tau^3\|r^{\epsilon}e^{\tau\phi} f\|+\tau^2 \delta^2 \|r^{-2} e^{\tau\phi} f\|.
 \label{Carle2}
 \end{align}
 \label{lemmm}
\end{lemma}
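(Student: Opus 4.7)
The plan is to obtain the bi-Laplace Carleman estimate \eqref{Carle2} by iterating the Laplace Carleman estimate \eqref{similar} twice, once applied to $\triangle f$ and once to $f$, with appropriately adjusted radial weights, and then absorbing the potential term $Wf$ into the right-hand side using the hypothesis $\tau > C(1+\|W\|_{L^\infty}^{1/3})$.

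The first step is to note that for any constant $a\in\mathbb{R}$, the perturbed weight $\tilde\phi_a:=\phi+\frac{a}{\tau}\ln r$ satisfies $e^{\tau\tilde\phi_a}=r^a e^{\tau\phi}$ and can be written as $\tilde\phi_a=-\tilde g_a(\ln r)$ with $\tilde g_a(t)=(1-\tfrac{a}{\tau})t-e^{\epsilon t}$. Since $\tilde g_a''=g''$, we still have $-e^{-t}\tilde g_a''(t)\to\infty$ as $t\to-\infty$, and for $\tau$ large enough relative to $|a|$ we also have $\tilde g_a'(t)\to 1-a/\tau>0$. Thus the H\"ormander-type conditions on the weight are preserved, and Carleman estimate \eqref{similar} continues to hold with $\phi$ replaced by $\tilde\phi_a$, at the cost of mildly enlarging the constants. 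This is the key technical point, and I expect it to be the most delicate bookkeeping of the proof.

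Next, apply \eqref{similar} to the function $\triangle f$ with weight $\tilde\phi_2$, so that the left-hand side becomes $\|r^2 e^{\tau\tilde\phi_2}\triangle(\triangle f)\|=\|r^4 e^{\tau\phi}\triangle^2 f\|$ and the right-hand side picks up $\tau^{3/2}\|r^{\epsilon/2+2}e^{\tau\phi}\triangle f\|+\tau\delta\|r e^{\tau\phi}\triangle f\|$. Then apply \eqref{similar} again, this time to $f$ itself, twice: once with weight $\tilde\phi_{\epsilon/2}$, giving
\begin{equation*}
C\|r^{\epsilon/2+2}e^{\tau\phi}\triangle f\|\geq \tau^{3/2}\|r^\epsilon e^{\tau\phi}f\|+\tau\delta\|r^{\epsilon/2-1}e^{\tau\phi}f\|,
\end{equation*}
and once with weight $\tilde\phi_{-1}$, giving
\begin{equation*}
C\|r e^{\tau\phi}\triangle f\|\geq \tau^{3/2}\|r^{\epsilon/2-1}e^{\tau\phi}f\|+\tau\delta\|r^{-2}e^{\tau\phi}f\|.
\end{equation*}
Chaining the three inequalities yields
\begin{equation*}
C^2\|r^4 e^{\tau\phi}\triangle^2 f\|\geq \tau^3\|r^\epsilon e^{\tau\phi}f\|+\tau^2\delta^2\|r^{-2}e^{\tau\phi}f\|,
\end{equation*}
where the mixed cross terms are nonnegative and simply discarded.

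Finally, the potential is handled by the triangle inequality:
\begin{equation*}
\|r^4 e^{\tau\phi}(\triangle^2 f - W f)\|\geq \|r^4 e^{\tau\phi}\triangle^2 f\| - \|W\|_{L^\infty}\|r^4 e^{\tau\phi} f\|.
\end{equation*}
Since $r\leq R_0$ (which we may take $\leq 1$) we have $r^4\leq R_0^{4-\epsilon}\,r^\epsilon$, so the error term is bounded by $C\|W\|_{L^\infty}\|r^\epsilon e^{\tau\phi}f\|$. The hypothesis $\tau^3\geq C(1+\|W\|_{L^\infty})$ allows this error to be absorbed into half of the $\tau^3\|r^\epsilon e^{\tau\phi}f\|$ term produced above, leaving the remaining half together with $\tau^2\delta^2\|r^{-2}e^{\tau\phi}f\|$ to yield \eqref{Carle2}. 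The main obstacle, as indicated, is justifying that the weights $\tilde\phi_a$ stay within the class for which \eqref{similar} applies uniformly; once that is in hand the rest is algebraic composition of the two Carleman inequalities.
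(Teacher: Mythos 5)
Your proposal is correct and matches the paper's approach: iterate the Laplace Carleman estimate \eqref{similar} twice (once on $\triangle f$, once on $f$) and absorb the $Wf$ term via $\tau^3\gtrsim 1+\|W\|_{L^\infty}$. The one step you flag as delicate — justifying that \eqref{similar} extends to the perturbed weights $\tilde\phi_a$ — the paper handles without re-deriving any estimate, by instead shifting the Carleman parameter: one writes $r^a e^{\tau\phi}=e^{(\tau-a)\phi}e^{a r^\epsilon}$, observes that $e^{a r^\epsilon}$ is bounded above and below by positive constants on $\mathbb B_{R_0}$ with $R_0<1$, and then applies \eqref{similar} verbatim with $\tau-a$ in place of $\tau$, so the weight function $\phi$ never changes.
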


\begin{proof}
Recall the definition of the weight function $\phi=-\ln r+r^\epsilon$, we see that
$$  r^4 e^{\tau\phi}=r^2 e^{(\tau-2)\phi}e^{2 r^\epsilon}. $$
Since $0<r<R_0<1$, then $1<e^{2 r^\epsilon}<e^2$. It follows from (\ref{similar}) that
\begin{align}
C^2\| r^4 e^{\tau \phi} \triangle^2 f \|&\geq  C\| r^2 e^{(\tau-2) \phi} \triangle^2 f \| \nonumber\\
&\geq \tau^{\frac{3}{2}}\|r^{\frac{\epsilon}{2}} e^{(\tau-2)\phi} \triangle f\|.
\label{hih1}
\end{align}
Since
\begin{align}r^{\frac{\epsilon}{2}}e^{(\tau-2)\phi}&=r^2 e^{\tau\phi}r^{\frac{\epsilon}{2}} e^{-2 r^\epsilon}\nonumber \\
&= r^2 e^{ (\tau-\frac{\epsilon}{2})\phi  } e^{\frac{\epsilon}{2} r^\epsilon} e^{-2 r^\epsilon}, \end{align}
it follows that
$$|r^{\frac{\epsilon}{2}}e^{(\tau-2)\phi}|\geq C r^2  e^{(\tau-\frac{\epsilon}{2})\phi}.  $$
Thus, from (\ref{similar}), we obtain that
\begin{align}
\|r^{\frac{\epsilon}{2}}e^{(\tau-2)\phi} \triangle f\|&\geq C\|r^2  e^{(\tau-\frac{\epsilon}{2})\phi}\triangle f\|\nonumber\\
 &\geq C\tau^{\frac{3}{2}} \|r^{\frac{\epsilon}{2}} e^{(\tau-\frac{\epsilon}{2})\phi} f\| \nonumber \\
&\geq C\tau^{\frac{3}{2}} \|r^\epsilon e^{\tau\phi} f\|,
\label{hih2}
\end{align}
where we have used the estimates
$$ e^{-\frac{\epsilon}{2}\phi}=r^{\frac{\epsilon}{2}} e^{-r^\epsilon}\geq r^{\frac{\epsilon}{2}} e^{-1}.   $$
Combining the inequalities (\ref{hih1}) and (\ref{hih2}), we get that
\begin{equation}
\| r^4 e^{\tau \phi} \triangle^2 f \|\geq C\tau^3 \|r^\epsilon e^{\tau\phi} f\|.
\label{nana}
\end{equation}
Carrying out the similar argument as the proof of (\ref{nana}), we can show that
\begin{equation}
\| r^4 e^{\tau \phi} \triangle^2 f \|\geq C\tau^2\delta^2 \|r^{-2} e^{\tau\phi} f\|.
\label{baba}
\end{equation}
In view of (\ref{nana}) and (\ref{baba}), we arrive at
\begin{align}
 C\| r^4 e^{\tau \phi} \triangle^2 f\|\geq \tau^3\|r^{\epsilon}e^{\tau\phi} f\|+\tau^2 \delta^2 \|r^{-2} e^{\tau\phi} f\|.
 \label{sasa}
 \end{align}
By triangle inequality and the last inequality, we deduce that
\begin{align}
C\| r^4 e^{\tau \phi} \triangle^2 f -W(x) f\|& \geq C\| r^4 e^{\tau \phi} \triangle^2 f \| -
\| r^4 e^{\tau \phi} W(x) f \| \nonumber \\
&\geq \tau^3\|r^{\epsilon}e^{\tau\phi} f\|+\tau^2 \delta^2 \|r^{-2} e^{\tau\phi} f\|\nonumber \\
&-\|W\|_{L^\infty}\| r^4 e^{\tau \phi}f \|.
\end{align}
If $\tau$ is chosen to be so large that $\tau^3\geq C(1+\|W\|_{L^\infty})$, the estimates (\ref{Carle2}) can be derived.
\end{proof}

Based on the quantitative Carleman estimates, we first deduce a quantitative three balls theorem. Let
$U=(u, v)^\intercal$, where $(u, v)$ are the solutions of the second order elliptic systems (\ref{system}). We apply such estimates to $\psi U$, where $\psi$ is an appropriate smooth cut-off function, and then select an appropriate choice of the parameter $\tau$. It is kind of a standard way to obtain the three-ball results, see e.g. \cite{AMRV}, \cite{Bak},  \cite{DF}, \cite{EV}, \cite{K}. The argument is also quite similar to the proof of Theorem \ref{th1} and the proof of Lemma \ref{baa} in the Section 5. We skip the details. The quantitative three balls theorem is stated as follows.

\begin{lemma}
There exist positive constants $\bar R$, $C$ and $0<\alpha_1<1$ which depend only on $\mathcal{M}$ such that, for any $R<\bar R$ and any $x_0\in \mathcal{M}$, the solutions $u$ of (\ref{bi-Laplace}) satisfy
\begin{equation}
\|(u, \triangle u)\|_{\mathbb B_{2R}(x_0)}\leq e^{CM^{\frac{2}{3}}} \|(u, \triangle u)\|^{\alpha_1}_{\mathbb B_R(x_0)} \|(u, \triangle u)\|^{1-\alpha_1}_{\mathbb B_{3R}(x_0)}.
\label{three-balls}
\end{equation}
\end{lemma}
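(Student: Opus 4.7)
The plan is to apply the vector Carleman estimate of Lemma~\ref{lemma1} to $F=\psi U$, where $U=(u,\triangle u)^{\intercal}$ satisfies the commuted first-order system $\triangle U = V U$, and $\psi$ is a cutoff tailored to the three concentric balls. I would choose $\psi\in C_0^\infty$ equal to $1$ on the middle annular region $\mathbb B_{5R/2}(x_0)\setminus \mathbb B_\rho(x_0)$ and vanishing on $\mathbb B_{\rho/2}(x_0)\cup (\mathcal{M}\setminus \mathbb B_{3R}(x_0))$, with $\rho$ a small fixed multiple of $R$ (say $\rho=R/8$) picked so that $\mathbb B_{2\rho}\subset \mathbb B_R$, together with the standard bounds $|\nabla^k \psi|\leq C/R^k$. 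Because $\triangle U - VU = 0$, the source on the left of Carleman collapses to the commutator $\triangle(\psi U)-V(\psi U)=2\nabla\psi\cdot\nabla U+(\triangle\psi)U$, which is nonzero only on the two transitional annuli $A_{\mathrm{in}}=\mathbb B_\rho\setminus \mathbb B_{\rho/2}$ and $A_{\mathrm{out}}=\mathbb B_{3R}\setminus \mathbb B_{5R/2}$.

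The next step exploits the monotonicity of $\phi(r)=-\ln r+r^\epsilon$, which is strictly decreasing in $r$ for $r<R_0$, to pull constants out of the weighted integrals. On the LHS of Carleman I restrict the integration to $\mathbb B_{2R}\setminus \mathbb B_\rho$, where $\psi\equiv 1$ and $e^{\tau\phi}\geq e^{\tau\phi(2R)}$. On the RHS I bound $e^{\tau\phi}\leq e^{\tau\phi(\rho/2)}$ on $A_{\mathrm{in}}$ and $e^{\tau\phi}\leq e^{\tau\phi(5R/2)}$ on $A_{\mathrm{out}}$, then use interior Caccioppoli estimates for $\triangle U= VU$ on slightly enlarged balls to replace $\|\nabla U\|$ on each annulus by $\|U\|$, which is routine as soon as $\tau\geq C(1+M^{2/3})$ dominates the zeroth-order coupling. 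Dividing through by $\tau^{3/2}e^{\tau\phi(2R)}$ and folding $\|U\|_{\mathbb B_\rho}\leq \|U\|_{\mathbb B_R}$ back into the LHS produces the intermediate estimate
\begin{equation*}
\|U\|_{\mathbb B_{2R}} \;\leq\; C\,e^{\tau a}\,\|U\|_{\mathbb B_R} \;+\; C\,e^{-\tau b}\,\|U\|_{\mathbb B_{3R}},\qquad \tau\geq \tau_0:=C(1+M^{2/3}),
\end{equation*}
with positive constants $a=\phi(\rho/2)-\phi(2R)$ and $b=\phi(2R)-\phi(5R/2)$ depending only on $\mathcal{M}$.

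Finally I would optimize in $\tau$. Set $\tau^\ast=(a+b)^{-1}\log(\|U\|_{\mathbb B_{3R}}/\|U\|_{\mathbb B_R})$. If $\tau^\ast\geq \tau_0$, the choice $\tau=\tau^\ast$ balances the two terms and produces the Hadamard inequality $\|U\|_{\mathbb B_{2R}}\leq 2C\|U\|_{\mathbb B_R}^{\alpha_1}\|U\|_{\mathbb B_{3R}}^{1-\alpha_1}$ with $\alpha_1=b/(a+b)\in(0,1)$. If instead $\tau^\ast<\tau_0$, then $\|U\|_{\mathbb B_{3R}}\leq \|U\|_{\mathbb B_R} e^{(a+b)\tau_0}$, so plugging $\tau=\tau_0$ already gives $\|U\|_{\mathbb B_{2R}}\leq Ce^{\tau_0 a}\|U\|_{\mathbb B_R}$, which is in turn bounded by $Ce^{CM^{2/3}}\|U\|_{\mathbb B_R}^{\alpha_1}\|U\|_{\mathbb B_{3R}}^{1-\alpha_1}$ using $\|U\|_{\mathbb B_R}\leq \|U\|_{\mathbb B_{3R}}$. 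Combining the two regimes produces exactly \eqref{three-balls}. The delicate point, as I see it, is arranging the geometry of $\psi$ so that both $a$ and $b$ stay strictly positive and independent of $R$: the LHS integration domain must reach all the way to $r=2R$ while the outer transition annulus $A_{\mathrm{out}}$ must begin at a strictly larger radius, which is why I insert the slab $\mathbb B_{5R/2}\setminus \mathbb B_{2R}$ as a buffer zone. The Caccioppoli passage from $\nabla U$ to $U$ is standard but must be performed on carefully nested balls so as not to degrade the threshold $\tau_0=C(1+M^{2/3})$ that ultimately produces the exponent $CM^{2/3}$ in the prefactor.
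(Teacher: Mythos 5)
Your proposal is correct and follows essentially the same route the paper indicates (the proof of this lemma is deferred in the paper to the arguments of Theorem~\ref{th1} and Lemma~\ref{baa}): apply the vector Carleman estimate of Lemma~\ref{lemma1} to $\psi U$, pull out constants via the radial decrease of $\phi$, replace gradient terms on the transition annuli by Caccioppoli, and optimize $\tau$ subject to the threshold $\tau\geq C(1+M^{2/3})$ that produces the prefactor $e^{CM^{2/3}}$. Your particular cut-off radii and the explicit two-regime balancing in $\tau$ are cosmetic variants of the paper's choice in Lemma~\ref{baa}, where the outer term is instead absorbed into $\tfrac12\|\cdot\|_{\mathbb B_{2R}}$ and $\tau$ is then set to $CM^{\cdot}+\beta_2^{-1}\ln(\cdots)$; the two optimizations yield the same interpolation exponent.
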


We use the three balls theorem to obtain the uniform doubling inequality. Such type of argument has been performed in e.g. \cite{DF}, \cite{Bak}. We apply the arguments to elliptic systems in (\ref{system}). We establish a lower bound of $L^2$ norm of $U$ in a small ball using the overlapping of three balls argument.
\begin{lemma}
Let $u$ be the solution of bi-Laplace equations (\ref{bi-Laplace}). For any $R>0$, there exists $C_R>0$ such that
\begin{equation}\|(u, \triangle u)\|_{\mathbb B_R(x_0)}\geq e^{-C_RM^{\frac{2}{3}}} \|(u, \triangle u)\|_{L^2(\mathcal{M})} \label{lower} \end{equation}
for any $x_0\in \mathcal{M}$.
\end{lemma}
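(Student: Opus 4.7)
The plan is to propagate smallness from $\mathbb B_R(x_0)$ across $\mathcal{M}$ via a chain of three-ball inequalities. First I would fix $R$ small enough that $3R<\bar R$ and cover the compact manifold by finitely many balls $\{\mathbb B_{R/2}(y_j)\}_{j=1}^N$ with $N=N(R,\mathcal{M})$ and $\mathbb B_{3R}(y_j)\subset\mathcal{M}$ for every $j$. Write $K=\|(u,\triangle u)\|_{L^2(\mathcal{M})}$ and $\mu_j=\|(u,\triangle u)\|_{\mathbb B_R(y_j)}$. Since the half-sized balls cover $\mathcal{M}$,
\[
K^2\leq \sum_{j=1}^N \mu_j^2\leq N\max_j\mu_j^2,
\]
so some center $y_{j^*}$ satisfies $\mu_{j^*}\geq K/\sqrt{N}$.

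Next I would join $x_0$ to $y_{j^*}$ by a chain $x_0=z_0,z_1,\dots,z_m=y_{j^*}$ with $m\leq N$ and $d(z_i,z_{i+1})<R$, so that $\mathbb B_R(z_{i+1})\subset \mathbb B_{2R}(z_i)$. Setting $\nu_i=\|(u,\triangle u)\|_{\mathbb B_R(z_i)}$ and using the trivial bound $\|(u,\triangle u)\|_{\mathbb B_{3R}(z_i)}\leq K$, the three-ball inequality (\ref{three-balls}) gives
\[
\nu_{i+1}\leq \|(u,\triangle u)\|_{\mathbb B_{2R}(z_i)}\leq e^{CM^{2/3}}\,\nu_i^{\alpha_1}K^{1-\alpha_1}.
\]
We may assume $\nu_i<K$ for every $i$, as otherwise the conclusion is trivial. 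Dividing by $K$ and taking logarithms this becomes $\log(K/\nu_{i+1})\geq \alpha_1\log(K/\nu_i)-CM^{2/3}$; iterating and summing the resulting geometric series yields
\[
\log\!\Big(\frac{K}{\nu_m}\Big)\geq \alpha_1^{m}\log\!\Big(\frac{K}{\nu_0}\Big)-\frac{CM^{2/3}}{1-\alpha_1}.
\]

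Since $\nu_m=\mu_{j^*}\geq K/\sqrt{N}$, the left-hand side is at most $\tfrac{1}{2}\log N$. Rearranging and using $\alpha_1^{-m}\leq \alpha_1^{-N}$ together with the largeness of $M$ produces
\[
\log\!\Big(\frac{K}{\nu_0}\Big)\leq \alpha_1^{-N}\Big(\tfrac{1}{2}\log N+\frac{CM^{2/3}}{1-\alpha_1}\Big)\leq C_R M^{2/3},
\]
which is precisely (\ref{lower}) with $\nu_0=\|(u,\triangle u)\|_{\mathbb B_R(x_0)}$. The only real obstacle is bookkeeping: one must arrange that $\mathbb B_{3R}(z_i)\subset\mathcal{M}$ at every link so that the three-ball inequality applies, and that the chain length $m$ is uniformly bounded independently of the target $y_{j^*}$. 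Both are handled by first shrinking $R$ so that every $3R$-ball stays inside $\mathcal{M}$, and then invoking compactness of $\mathcal{M}$ so that any two centers can be connected through the fixed cover in at most $N$ steps; the final constant $\alpha_1^{-N}$ and the geometric sum factor $(1-\alpha_1)^{-1}$ are then absorbed into $C_R$.
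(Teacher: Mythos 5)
Your proof is correct and takes essentially the same route as the paper: cover the compact manifold to locate a ball of fixed radius carrying a non-negligible share of $\|U\|_{L^2(\mathcal{M})}$, then join $x_0$ to that ball by a finite chain of overlapping balls and iterate the three-ball inequality (\ref{three-balls}). The only differences are cosmetic --- you isolate the good center via $L^2$-additivity over a half-radius cover rather than taking the supremum point as the paper does, and you carry out the iteration in logarithmic form, but the geometric-series bookkeeping and the way $C_R$ depends on the chain length are the same.
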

\begin{proof}
Assume that $R<\frac{R_0}{10}$. Without loss of generality, we assume that $$\|U\|_{L^2(\mathcal{M})}=\|(u, \triangle u)^\intercal\|_{L^2(\mathcal{M})} = 1.$$ We denote $y_0$ to be the point
where $$\|U\|_{\mathbb B_{2R}(y_0)}=\sup_{x\in \mathcal{M}}\|U\|_{\mathbb B_{2R}(x)}.$$ Since finite numbers of $\mathbb B_{2R}(x)$ will cover the
compact manifold $\mathcal{M}$, then $\|U\|_{\mathbb B_{2R}(y_0)}\geq D_R$, where $D_R$ depends on $\mathcal{M}$ and $R$. At any point $x\in \mathcal{M}$, the three balls theorem in the last lemma implies that
\begin{equation}\|U\|_{\mathbb B_R(x)}\geq e^{-CM^{\frac{2}{3}}} \|U\|^{\frac{1}{\alpha_1}}_{\mathbb B_{2R}(x)}.
\label{ineq}
\end{equation}
Let $l$ be the geodesic that connects $x_0$ and $y_0$. We select a sequence of $x_0, x_1, \cdots, x_m=y_0$ such that $x_i\in l$ and
$\mathbb B_R(x_{i+1})\subset \mathbb B_{2R}(x_{i})$ for $i=0, \cdots, m-1$. The number $m$ depends on the manifold $\mathcal{M}$ and $R$.
Applying the  inequality (\ref{ineq}) at $x_i$, it follows that
\begin{equation}\|U\|_{\mathbb B_R(x_i)}\geq e^{-CM^{\frac{2}{3}}} \|U\|^{\frac{1}{\alpha_1}}_{\mathbb B_{R}(x_{i+1})}
\label{mmm}
\end{equation}
for $i=0, \cdots, m-1$. Iterating the estimates (\ref{mmm}) $m$ times, it gets to the point $y_0$. Then
\begin{align*}\|U\|_{\mathbb B_R(x_0)}&\geq e^{-C_RM^{\frac{2}{3}}} \|U\|^{\frac{1}{\alpha^m_1}}_{\mathbb B_{2R}(x_{m})}&\nonumber \\&\geq e^{-C_RM^{\frac{2}{3}}} D^{\frac{1}{\alpha^m_1}}_R,   \end{align*}
which implies that
$$\|U\|_{\mathbb B_R(x_0)}\geq e^{-C_RM^{\frac{2}{3}}} \|U\|_{L^2(\mathcal{M})} .$$
Thus, the lemma is shown.
\end{proof}
Recall that $A_{R, 2R}$ is an annulus. Since the manifold $\mathcal{M}$ is complete and compact, there exists some $x_1\in A_{R, 2R}$ such that
$\mathbb B_{\frac{R}{2}}(x_1)\subset A_{R, 2R}$. From the last lemma, one has
\begin{align}
\|U\|_{R, 2R}&\geq \|U\|_{\mathbb B_{\frac{R}{2}}(x_1)} \nonumber \\
&\geq e^{-C_RM^{\frac{2}{3}}} \|U\|_{L^2(\mathcal{M})}.
\label{annu}
\end{align}

Next we proceed to show the doubling inequality.

\begin{proof}[Proof of Theorem \ref{th1}]
Let $R=\frac{\bar R}{8}$, where $\bar R$ is the fixed constant in the three balls inequality (\ref{three-balls}). Let $0<\delta<\frac{\bar R}{24}$, which can be chosen to be arbitrary small. Define a smooth cut-off function $0<\psi<1$ as follows.
\begin{itemize}
\item $\psi(r)=0$ \ \ \mbox{if} \ $r(x)<\delta$ \ \mbox{or} \  $r(x)>2R$, \medskip
\item $\psi(r)=1$ \ \ \mbox{if} \ $\frac{3\delta}{2}<r(x)<R$, \medskip
\item $|\nabla^\alpha \psi|\leq \frac{C}{R^\alpha}$ \ \ \mbox{if} $\delta<r(x)<\frac{3\delta}{2}$, \medskip
\item $|\nabla^\alpha \psi|\leq C$ \ \  \mbox{if} \ $R<r(x)<2R$,
\end{itemize}
where $\alpha=(\alpha_1, \cdots, \alpha_n)$ is a multi-index. Applying the Carleman estimates (\ref{Carle}) with $F=\psi U$ and using the elliptic systems (\ref{system}), it follows that
\begin{align*}
 \| r^{\frac{\epsilon}{2}} e^{\tau \phi}\psi U\|+ \tau\delta \| r^{-1} e^{\tau \phi}\psi U\|
\leq C\| r^2 e^{\tau \phi}(\triangle \psi U+2\nabla\psi\cdot \nabla U)\|.
\end{align*}

The properties of $\psi$ imply that
\begin{align*}
 \| r^{\frac{\epsilon}{2}} e^{\tau \phi} U\|_{\frac{R}{2}, \frac{2R}{3}}+  \|  e^{\tau \phi} U\|_{\frac{3\delta}{2}, 4\delta}
 &\leq C (\| e^{\tau \phi} U\|_{\delta, \frac{3\delta}{2}}+\| e^{\tau \phi} U\|_{R, 2R} ) \\
&+ C(\delta \| e^{\tau \phi} \nabla U\|_{\delta, \frac{3\delta}{2}}+R\| e^{\tau \phi} \nabla U\|_{R, 2R}).
\end{align*}
The radial and decreasing property of $\phi$ yields that
\begin{align*}
e^{\tau \phi(\frac{2R}{3})}\|  U\|_{\frac{R}{2}, \frac{2R}{3}}+ e^{\tau \phi({4\delta})}  \|  U\|_{\frac{3\delta}{2}, 4\delta}
&\leq C (e^{\tau \phi(\delta) }\|  U\|_{\delta, \frac{3\delta}{2}}+e^{\tau \phi(R) }\| e^{\tau \phi} U\|_{R, 2R} ) \\
&+ C(\delta e^{\tau \phi(\delta)} \| \nabla U\|_{\delta, \frac{3\delta}{2}}+Re^{\phi(R)}\| e^{\tau \phi} \nabla U\|_{R, 2R}).
\end{align*}
It is known that the Caccioppoli type inequality
\begin{equation}
\|\nabla U\|_{(1-a)r}\leq \frac{CM^{1/2}}{r}\| U\|_{r}
\label{Cacc}
\end{equation}
holds for the solution of elliptic systems (\ref{system}) with some $0<a<1$.
Using the Caccioppoli type inequality (\ref{Cacc}), we have
\begin{align}
e^{\tau \phi(\frac{2R}{3})}\|  U\|_{\frac{R}{2}, \frac{2R}{3}}+ e^{\tau \phi({4\delta})}  \|  U\|_{\frac{3\delta}{2}, 4\delta}
&\leq C M^{\frac{1}{2}} (e^{\tau \phi(\delta) }\|  U\|_{2\delta}+e^{\phi(R)}\| e^{\tau \phi}  U\|_{3R}).
\end{align}
Adding $ e^{\tau \phi({4\delta})} \|U\|_{\frac{3\delta}{2}}$ to both sides of last inequality and considering that $\phi(\delta)>\phi(4\delta)$, we obtain that
\begin{align}
e^{\tau \phi(\frac{2R}{3})}\|  U\|_{\frac{R}{2}, \frac{2R}{3}}+ e^{\tau \phi({4\delta})}  \|  U\|_{4\delta}
&\leq C M^{\frac{1}{2}} (e^{\tau \phi(\delta) }\|  U\|_{2\delta}+e^{\phi(R)}\| e^{\tau \phi}  U\|_{3R}).
\label{droppp}
\end{align}
We choose $\tau$ such that
$$CM^{\frac{1}{2}}e^{\tau\phi(R)}\|U\|_{3R}\leq \frac{1}{2}e^{\tau\phi(\frac{2R}{3})} \|U\|_{\frac{R}{2}, \frac{2R}{3}}. $$
That is,
$$ \tau\geq \frac{1}{\phi(\frac{2R}{3})-\phi(R)}\ln \frac{ 2CM^{\frac{1}{2}} \|U\|_{3R}}{ \|U\|_{\frac{R}{2}, \frac{3R}{2}}}.           $$
Then
\begin{align}
e^{\tau \phi(\frac{2R}{3})}\|  U\|_{\frac{R}{2}, \frac{2R}{3}}+ e^{\tau \phi({4\delta})}  \|  U\|_{4\delta}
&\leq C M^{\frac{1}{2}} e^{\tau \phi(\delta) }\|  U\|_{2\delta}.
\label{dropp}
\end{align}
To apply the Carleman estimates (\ref{Carle}), it is required that $\tau\geq CM^{\frac{2}{3}}$. We select
$$\tau=CM^{\frac{2}{3}}+ \frac{1}{\phi(\frac{2R}{3})-\phi(R)}\ln \frac{ 2CM^{\frac{1}{2}} \|U\|_{3R}}{ \|U\|_{\frac{R}{2}, \frac{3R}{2}}}. $$
Dropping the first term in (\ref{dropp}), we get that
\begin{align}
\|U\|_{4\delta}&\leq CM^{\frac{1}{2}} \exp\{ \big(CM^{\frac{2}{3}}+ \frac{1}{\phi(\frac{2R}{3})-\phi(R)}\ln \frac{ 2CM^{\frac{1}{2}} \|U\|_{3R}}{ \|U\|_{\frac{R}{2}, \frac{3R}{2}}}\big)\big(\phi(\delta)-\phi(4\delta)\big)  \}\|U\|_{2\delta} \nonumber \\
&\leq e^{C M^{\frac{2}{3}}}   (\frac{\|U\|_{3R}}{ \|U\|_{\frac{R}{2}, \frac{3R}{2}}})^C \|U\|_{2\delta},
\label{mada}
\end{align}
where we have used the fact that
$$ \beta_1^{-1}<\phi(\frac{2R}{3})-\phi(R)<\beta_1,  $$
$$  \beta_2^{-1}< \phi(\delta)-\phi(4\delta)<\beta_2 $$
for some positive constant $\beta_1$ and $\beta_2$ that do not depend on $R$ or $\delta$.
With aid of (\ref{annu}), it is known that
$$\frac{\|U\|_{3R}}{ \|U\|_{\frac{R}{2}, \frac{3R}{2}}}\leq e^{CM^\frac{2}{3}}. $$
Therefore, it follows from (\ref{mada}) that
$$ \|U\|_{4\delta}\leq  e^{CM^\frac{2}{3}} \|U\|_{2\delta}. $$
Choosing $\delta=\frac{r}{2}$, we obtain the doubling estimates
\begin{equation}
\|U\|_{2r}\leq  e^{CM^\frac{2}{3}} \|U\|_{r}
\end{equation}
for $r\leq \frac{\bar R}{12}$. If $r\geq \frac{\bar R}{12}$, from $(\ref{lower})$,
\begin{align*}
\|U\|_{r}&\geq \|U\|_{\frac{\bar R}{12}} \\
&\geq  e^{-C_{\bar R}M^{\frac{2}{3}}}\|U\|_{\mathcal{M}} \\
&\geq e^{-C_{\bar R}M^{\frac{2}{3}}}\|U\|_{2r}.
\end{align*}
Hence,  the doubling estimates
\begin{equation}
\|U\|_{2r}\leq  e^{CM^\frac{2}{3}} \|U\|_{r}
\end{equation}
 are achieved for any $r>0$, where $C$ only depends on the manifold $\mathcal{M}$. Since $x_0$ is any arbitrary point in $\mathcal{M}$, we have shown the uniform doubling inequality. Note that $U=(u, \triangle u)^\intercal $. The proof of Theorem \ref{th1} is arrived.
\end{proof}

\section{Refined doubling inequality for bi-Laplace equations}

This section is devoted to obtaining a refined doubling inequality for the solutions of bi-Laplace equations (\ref{bi-Laplace}).
 We apply Carleman estimates in Lemma \ref{lemmm} to show  the three balls theorem for the solution $u$ of the bi-Laplace equations (\ref{bi-Laplace}).

\begin{lemma}
There exist positive constants $\bar R$, $C$ and $0<\alpha_1<1$ which depends only on $\mathcal{M}$ such that, for any $R<\bar R$ and any $x_0\in \mathcal{M}$, the solutions $u$ of (\ref{bi-Laplace}) satisfy
\begin{equation}
\|u\|_{\mathbb B_{2R}(x_0)}\leq e^{CM^{\frac{1}{3}}} \|u\|^{\alpha_1}_{\mathbb B_R(x_0)} \|u\|^{1-\alpha_1}_{\mathbb B_{3R}(x_0)}.
\label{balls}
\end{equation}
\label{baa}
\end{lemma}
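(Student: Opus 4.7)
The plan is to mirror the proof of Theorem \ref{th1}, now using the bi-Laplace Carleman estimate (\ref{Carle2}) in place of the elliptic-system estimate (\ref{Carle}). Fix $R<\bar R/3$, choose $\delta\in(0,R/10)$, and introduce a radial smooth cutoff $\psi$ with $\psi\equiv 1$ on $A_{3\delta/2,5R/2}$, $\psi\equiv 0$ outside $A_{\delta,3R}$, $|\nabla^k\psi|\le C\delta^{-k}$ on the inner transition $A_{\delta,3\delta/2}$, and $|\nabla^k\psi|\le CR^{-k}$ on the outer transition $A_{5R/2,3R}$, for $0\le k\le 4$.

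Apply (\ref{Carle2}) to $f=\psi u$ with $\tau\ge C(1+M^{1/3})$. Since $\triangle^2 u=W(x)u$, the argument of the left-hand weighted norm reduces to the commutator $[\triangle^2,\psi]u$, which is supported in the two transition annuli and is a linear combination of $\nabla^{4-j}\psi\cdot\nabla^j u$ for $1\le j\le 4$. Bounding $\|\nabla^j u\|_{L^2}$ on slightly enlarged shells by interior elliptic regularity for $\triangle^2 u=Wu$, which yields $\|\nabla^j u\|_{L^2(\mathbb B_\rho)}\le C_\rho(1+M)\|u\|_{L^2(\mathbb B_{2\rho})}$ for $j\le 3$, and using $r\le C\delta$ on the inner shell and $r\le CR$ on the outer shell, the weighted commutator term admits the bound
\[
\|r^4e^{\tau\phi}[\triangle^2,\psi]u\|\le C(1+M)\bigl(e^{\tau\phi(\delta)}\|u\|_{2\delta}+e^{\tau\phi(5R/2)}\|u\|_{3R}\bigr).
\]
On the Carleman right-hand side, restricting $\|r^\epsilon e^{\tau\phi}\psi u\|$ to $A_{3\delta/2,2R}$ (where $\psi\equiv 1$) and exploiting that $\phi$ is radially decreasing gives the lower bound $c_R\tau^3 e^{\tau\phi(2R)}\|u\|_{A_{3\delta/2,2R}}$.

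Combining these estimates, using $\|u\|_{2R}\le\|u\|_{A_{3\delta/2,2R}}+\|u\|_{3\delta/2}\le\|u\|_{A_{3\delta/2,2R}}+\|u\|_{R}$ to absorb the inner ball, and noting that $1+M\le e^{CM^{1/3}}$ for $M$ large, one arrives at
\[
\|u\|_{2R}\le e^{CM^{1/3}}\bigl(e^{\tau A}\|u\|_R+e^{-\tau B}\|u\|_{3R}\bigr),
\]
with $A=\phi(\delta)-\phi(2R)>0$ and $B=\phi(2R)-\phi(5R/2)>0$. Setting $\tau_\ast=(A+B)^{-1}\log(\|u\|_{3R}/\|u\|_R)$, two cases arise: if $\tau_\ast\ge CM^{1/3}$, take $\tau=\tau_\ast$ to balance the two exponentials, yielding $\|u\|_{2R}\le e^{CM^{1/3}}\|u\|_R^{\alpha_1}\|u\|_{3R}^{1-\alpha_1}$ with $\alpha_1=B/(A+B)\in(0,1)$; if $\tau_\ast<CM^{1/3}$, take $\tau=CM^{1/3}$ to obtain $\|u\|_{2R}\le e^{CM^{1/3}}\|u\|_R$, which trivially implies the same three-balls bound via $\|u\|_R\le\|u\|_R^{\alpha_1}\|u\|_{3R}^{1-\alpha_1}$.

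The main obstacle is handling the commutator $[\triangle^2,\psi]u$, which involves derivatives of $u$ of orders up to three. In contrast to the Caccioppoli bound (\ref{Cacc}) used in Theorem \ref{th1}, which contributed only a factor $M^{1/2}$ from first derivatives, here the natural higher-order elliptic regularity estimate a priori produces a polynomial-in-$M$ factor. The key observation is that this polynomial factor is harmlessly dominated by $e^{CM^{1/3}}$ once $\tau$ is set at the threshold $\tau\sim M^{1/3}$ dictated by Lemma \ref{lemmm}, so the cubic-root exponent in the stated three-balls inequality is preserved.
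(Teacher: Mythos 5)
Your argument is correct and mirrors the paper's proof: both apply the bi-Laplace Carleman estimate (\ref{Carle2}) to $\psi u$, control the third-order commutator $[\triangle^2,\psi]u$ by interior regularity (the paper uses the Caccioppoli-type bound (\ref{hihcac}) from \cite{Zhu}, which gives $(1+M)^3$ rather than your $(1+M)$, but both are swallowed by $e^{CM^{1/3}}$ once $\tau\gtrsim M^{1/3}$), then absorb the outer-annulus term by choosing $\tau$ and add the inner ball to upgrade to the full $\|u\|_{2R}$. The only point worth tightening is that you should fix $\delta$ as a \emph{definite} multiple of $R$ (say $\delta=R/10$) rather than leave it free in $(0,R/10)$, since your exponent $\alpha_1=B/(A+B)$ with $A=\phi(\delta)-\phi(2R)$ would otherwise depend on $\delta/R$; with $\delta\sim R$ the ratios $A,B$ tend to universal constants as $R\to 0$, giving an $\alpha_1$ depending only on $\mathcal{M}$, exactly as in the paper's $\beta_1,\beta_2$.
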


\begin{proof}
We introduce a cut-off function $\psi(r)\in C^\infty_0(\mathbb B_{3R})$ with $R<\frac{R_0}{3}$. Let $0<\psi(r)<1$ satisfy the following properties:
\begin{itemize}
\item $\psi(r)=0$ \ \ \mbox{if} \ $r(x)<\frac{R}{4}$ \ \mbox{or} \  $r(x)>\frac{5R}{2}$, \medskip
\item $\psi(r)=1$ \ \ \mbox{if} \ $\frac{3R}{4}<r(x)<\frac{9R}{4}$, \medskip
\item $|\nabla^\alpha \psi|\leq \frac{C}{R^{|\alpha|}}$
\end{itemize}
for $\alpha=(\alpha_1, \cdots, \alpha_n)$. Since the function $\psi u$ is support in the annulus $A_{\frac{R}{4}, \frac{5R}{2}}$, applying the Carelman estimates (\ref{Carle2}) with $f=\psi u$, we obtain that
\begin{align}
\tau^2 \| e^{\tau \phi} u\|& \leq C\| r^4 e^{\tau \phi}(\triangle^2 (\psi u)-W(x)\psi u)\| \nonumber  \\
&=C \| r^4 e^{\tau \phi}[\triangle^2, \ \psi] u\|,
\end{align}
where we have used the equation (\ref{bi-Laplace}). Note that $[\triangle^2, \ \psi]$ is a three order differential operator on $u$ involving the derivative of $\psi$.
By the properties of $\psi$, we have
\begin{align*}
\| e^{\tau \phi} u\|_{\frac{3R}{4}, \frac{9R}{4}}&\leq C (\| e^{\tau \phi} u\|_{\frac{R}{4}, \frac{3R}{4}}+\| e^{\tau \phi} u\|_{\frac{9R}{4}, \frac{5R}{2}} ) \\
&+ C(\sum_{|\alpha|=1}^{3} \| r^{|\alpha|} e^{\tau \phi} \nabla^\alpha u \|_{\frac{R}{4}, \frac{3R}{4}}+\sum_{|\alpha|=1}^{3} \| r^{|\alpha|} e^{\tau \phi} \nabla^\alpha u \|_{\frac{9R}{4}, \frac{5R}{2}}).
\end{align*}
Recall that the weight function $\phi$ is radial and decreasing. It follows that
\begin{align}
 \|e^{\tau \phi}  u\|_{\frac{3R}{4}, \frac{9R}{4}}&\leq C (e^{\tau \phi(\frac{R}{4})} \|  u\|_{\frac{R}{4}, \frac{3R}{4}}+e^{\tau \phi(\frac{9R}{4})} \| u\|_{\frac{9R}{4}, \frac{5R}{2}} ) \nonumber \\
&+ C(e^{\tau \phi(\frac{R}{4})}\sum_{|\alpha|=1}^{3}\| r^{|\alpha|} \nabla^\alpha u\|_{\frac{R}{4}, \frac{3R}{4}}+e^{\tau \phi(\frac{9R}{4})}\sum_{|\alpha|=1}^{3}\| r^{|\alpha|} \nabla^\alpha u\|_{\frac{9R}{4}, \frac{5R}{2}}).
\label{recall}
\end{align}
For the higher order elliptic equations
\begin{equation} (- \triangle)^m u+W(x) u=0,
\label{zhu}
\end{equation}
the following Caccioppoli type inequality
\begin{equation}
\sum^{2m-1}_{|\alpha|=0} \|r^{|\alpha|}\nabla^\alpha u\|_{c_3R,  c_2R} \leq C (\|W\|_{L^\infty}+1)^{2m-1} \|u\|_{c_4R,  c_1R}
\label{hihcac}
\end{equation}
has been shown in \cite{Zhu} for all positive constant $0<c_4<c_3<c_2<c_1<1$.
The estimate (\ref{hihcac}) yields that
\begin{align*}
\| r^{|\alpha|} \nabla^\alpha u\|_{\frac{R}{4}, \frac{3R}{4}}\leq CM^3 \| u\|_{R}
\end{align*}
and
\begin{align*}
\| r^{|\alpha|} \nabla^\alpha u\|_{\frac{9R}{4}, \frac{5R}{2}}\leq  C M^3 \| u\|_{3R}
\end{align*}
for all $1\leq |\alpha|\leq 3$. Therefore, from (\ref{recall}), we get that
\begin{align}
\|u\|_{\frac{3R}{4}, 2R}\leq CM^{3}_1\big( e^{\tau(\phi(\frac{R}{4})-\phi(2R))} \|u\|_R+  e^{\tau(\phi(\frac{9R}{4})-\phi(2R))} \|u\|_{3R}\big).
\label{inequal}
\end{align}
We choose parameters
$$ \beta^1_R=\phi(\frac{R}{4})-\phi(2R),$$
$$ \beta^2_R=\phi(2R)-\phi(\frac{9R}{4}).$$
From the definition of $\phi$, we know that
$$ 0<\beta^{-1}_1<\beta^1_R<\beta_1 \quad \mbox{and} \quad 0<\beta_2<\beta^2_R<\beta^{-1}_2,$$
where $\beta_1$ and $\beta_2$ do not depend on $R$.
Adding $\|u\|_{\frac{3R}{4}}$ to both sides of the inequality (\ref{inequal}) gives that
\begin{equation}
\|u\|_{2R}\leq CM^{3}\big( e^{\tau\beta_1}\|u\|_R+  e^{-\tau\beta_2}\|u\|_{3R}     \big).
\end{equation}
To incorporate the second term in the right hand side of the last inequality into the left hand side,
 we choose $\tau$ such that
$$CM^{3}e^{-\tau\beta_2}\|u\|_{3R}\leq \frac{1}{2}\|u\|_{2R},   $$
which is true if
$$\tau\geq \frac{1}{\beta_2} \ln \frac{2CM^{3}\|u\|_{3R}}{\|u\|_{2R} }.   $$
Thus, we obtain that
\begin{equation}
\|u\|_{2R}\leq CM^3 e^{\tau\beta_1}\|u\|_R.
\label{substitute}
\end{equation}
Since $\tau>CM^{3}$ is needed to apply the Carleman estimates (\ref{Carle2}), we choose
$$ \tau=CM^{\frac{1}{3}}+\frac{1}{\beta_2} \ln \frac{2CM^3\|u\|_{3R}}{\|u\|_{2R} }. $$
Substituting such $\tau$ in (\ref{substitute}) gives that
\begin{align}
\|u\|_{2R}^{\frac{\beta_2+\beta_1}{\beta_2}} \leq e^{ CM^{\frac{1}{3}}}\|u\|_{3R}^{\frac{\beta_1}{\beta_2}} \|u\|_R.
\end{align}
Raising exponent $\frac{\beta_2}{\beta_2+\beta_1}$ to both sides of the last inequality yields that
\begin{align}
\|u\|_{2R} \leq e^{ CM^{\frac{1}{3}}}\|u\|_{3R}^{\frac{\beta_1}{\beta_1+\beta_2}} \|u\|_R^{\frac{\beta_2}{\beta_1+\beta_2}}.
\end{align}
Set $\alpha_1={\frac{\beta_2}{\beta_1+\beta_2}}$. We arrive at the three balls inequality in the lemma.
\end{proof}

Following the strategy in the proof of (\ref{lower}) and (\ref{annu}) by using the three balls theorem (\ref{balls}), we can show the following results. For any $R>0$, there exists $C_R$ such that
\begin{equation} \|u\|_{\mathbb B_R(x_0)}\geq e^{-C_RM^{\frac{1}{3}}} \|u\|_{L^2(\mathcal{M})} \end{equation}
for any $x_0\in \mathcal{M}$. Furthermore, it holds that
\begin{align}
\|u\|_{R, 2R}\geq e^{-C_RM^{\frac{1}{3}}} \|u\|_{L^2(\mathcal{M})}.
\label{annu11}
\end{align}

Next we proceed to show the doubling inequality for the solutions of bi-Laplace equations (\ref{bi-Laplace}). The argument is somewhat parallel to the proof of the double inequality for elliptic systems. We show the details of the argument as follows.

\begin{proof}[Proof of Theorem \ref{th3}]
Let us fix $R=\frac{\bar R}{8}$, where $\bar R$ is the one in the three balls inequality (\ref{balls}). Let $0<\delta<\frac{R}{24}$ be arbitrary small. A smooth cut-off function $0<\psi<1$ is introduced as follows,
\begin{itemize}
\item $\psi(r)=0$ \ \ \mbox{if} \ $r(x)<\delta$ \ \mbox{or} \  $r(x)>2R$, \medskip
\item $\psi(r)=1$ \ \ \mbox{if} \ $\frac{3\delta}{2}<r(x)<R$, \medskip
\item $|\nabla^\alpha \psi|\leq \frac{C}{R^\alpha}$ \ \ \mbox{if} $\delta<r(x)<\frac{3\delta}{2}$,\medskip
\item $|\nabla^\alpha \psi|\leq C$ \ \  \mbox{if} \ $R<r(x)<2R$.
\end{itemize}
We use the Carleman estimates (\ref{Carle2}) again. Replacing $f$ by $\psi u$ and substituting it into (\ref{Carle2}) gives that
\begin{align*}
 \| r^{\epsilon} e^{\tau \phi}\psi u\|+ \tau^2\delta^2 \| r^{-2} e^{\tau \phi}\psi u\|
\leq C\| r^4 e^{\tau \phi}[ \triangle^2, \ \psi] u\|,
\end{align*}
where $[\triangle^2, \ \psi]$ is a three order differential operator on $u$ involving the derivative of $\psi$ and $\tau \geq 1$. The properties of $\psi$ imply that
\begin{align*}
 \| r^{\epsilon} e^{\tau \phi} u\|_{\frac{R}{2}, \frac{2R}{3}}+  \|  e^{\tau \phi} u\|_{\frac{3\delta}{2}, 4\delta}
 &\leq C (\| e^{\tau \phi} u\|_{\delta, \frac{3\delta}{2}}+\| e^{\tau \phi} u\|_{R, 2R} ) \\
&+ C(\sum^{3}_{|\alpha|=1}\| r^{|\alpha|} e^{\tau \phi} \nabla^\alpha u\|_{\delta, \frac{3\delta}{2}}+\sum^{3}_{|\alpha|=1}\|  r^{|\alpha|} e^{\tau \phi} \nabla^\alpha u\|_{R, 2R}).
\end{align*}
Taking the exponential function $e^{\tau\phi}$ out by using the fact that $\phi$ is radial and decreasing, we obtain that
\begin{align*}
e^{\tau \phi(\frac{2R}{3})}\|  u\|_{\frac{R}{2}, \frac{2R}{3}}+ e^{\tau \phi({4\delta})}  \|  u\|_{\frac{3\delta}{2}, 4\delta}
&\leq C (e^{\tau \phi(\delta) }\|  u\|_{\delta, \frac{3\delta}{2}}+e^{\tau \phi(R) }\| e^{\tau \phi} u\|_{R, 2R} ) \\
&+ C(e^{\tau \phi(\delta)}\sum^{3}_{|\alpha|=1} \| r^{|\alpha|} e^{\tau \phi} \nabla^\alpha u\|_{\delta,
\frac{3\delta}{2}}+e^{\phi(R)}\sum^{3}_{|\alpha|=1}\|r^{|\alpha|} e^{\tau \phi} \nabla^\alpha u\|_{R, 2R}).
\end{align*}
The use of  Caccioppoli type inequality (\ref{hihcac})
further implies that
\begin{align}
e^{\tau \phi(\frac{2R}{3})}\|  u\|_{\frac{R}{2}, \frac{2R}{3}}+ e^{\tau \phi({4\delta})}  \|  u\|_{\frac{3\delta}{2}, 4\delta}
&\leq C M^3 (e^{\tau \phi(\delta) }\|  u\|_{2\delta}+e^{\phi(R)}\| e^{\tau \phi} u\|_{3R}).
\end{align}
Adding $ e^{\tau \phi({4\delta})} \|u\|_{\frac{3\delta}{2}}$ to both side of last inequality, it follows that
\begin{align}
e^{\tau \phi(\frac{2R}{3})}\|  u\|_{\frac{R}{2}, \frac{2R}{3}}+ e^{\tau \phi({4\delta})}  \|  u\|_{4\delta}
&\leq C M^3 (e^{\tau \phi(\delta) }\|  u\|_{2\delta}+e^{\phi(R)}\| e^{\tau \phi} u\|_{3R}).
\label{drop}
\end{align}
We want to get rid of the second term in the right hand side of the last inequality.
We choose $\tau$ such that
$$CM^3 e^{\tau\phi(R)}\|u\|_{3R}\leq \frac{1}{2}e^{\tau\phi(\frac{2R}{3})} \|u\|_{\frac{R}{2}, \frac{2R}{3}}.  $$
That is, at least
$$ \tau\geq \frac{1}{\phi(\frac{2R}{3})-\phi(R)}\ln \frac{ 2CM^3 \|u\|_{3R}}{ \|u\|_{\frac{R}{2}, \frac{3R}{2}}}.    $$
Then we arrive at
\begin{equation}
e^{\tau \phi(\frac{2R}{3})}\|  u\|_{\frac{R}{2}, \frac{2R}{3}}+ e^{\tau \phi({4\delta})}  \|  u\|_{4\delta}
\leq C M^{3} e^{\tau \phi(\delta) }\|  U\|_{2\delta}.
\label{droppd}
\end{equation}

To apply the Carleman estimates (\ref{Carle2}), the assumption for $\tau$ is that  $\tau\geq CM^\frac{1}{3}$. Therefore, we select
$$\tau=CM^\frac{1}{3}+ \frac{1}{\phi(\frac{2R}{3})-\phi(R)}\ln \frac{ 2CM^3 \|u\|_{3R}}{ \|u\|_{\frac{R}{2}, \frac{3R}{2}}}. $$
Furthermore, dropping the first term in (\ref{droppd}), we get that
\begin{align}
\|u\|_{4\delta}&\leq CM^3 \exp\{ \big(CM^\frac{1}{3}+ \frac{1}{\phi(\frac{2R}{3})-\phi(R)}\ln \frac{ 2CM^3 \|u\|_{3R}}{ \|u\|_{\frac{R}{2}, \frac{3R}{2}}}\big)\big(\phi(\delta)-\phi(4\delta)\big)  \}\|u\|_{2\delta} \nonumber \\
&\leq e^{C M^\frac{1}{3}}   (\frac{\|u\|_{3R}}{ \|u\|_{\frac{R}{2}, \frac{3R}{2}}})^C \|u\|_{2\delta}.
\label{tata}
\end{align}
It follows from (\ref{annu11}) that
$$\frac{\|u\|_{3R}}{ \|u\|_{\frac{R}{2}, \frac{3R}{2}}}\leq e^{CM^\frac{1}{3}}. $$
Combining the last inequality with (\ref{tata}) yields that
$$ \|u\|_{4\delta}\leq  e^{CM^\frac{1}{3}} \|u\|_{2\delta}. $$
Let $\delta=\frac{r}{2}$. The doubling inequality
\begin{equation}
\|u\|_{2r}\leq  e^{CM^\frac{1}{3}} \|u\|_{r}
\end{equation}
is deduced for $r\leq \frac{R_0}{12}$. If $r\geq \frac{R_0}{12}$, using (\ref{annu11}) as the arguments analogous to the elliptic systems, we can derive that
\begin{equation}
\|u\|_{{2r}}\leq  e^{CM^\frac{1}{3}} \|u\|_{{r}}
\end{equation}
for any $r>0$ and $x_0\in \mathcal{M}$, where $C$ only depends on the manifold $\mathcal{M}$. Therefore, the theorem is completed.
\end{proof}

At last, we give the proof of the corollary \ref{cor1} based on the doubling inequality in Theorem \ref{th3}.
\begin{proof}[Proof of Corollary \ref{cor1}]
The $L^\infty$ norm estimate for higher order elliptic equations (\ref{hihcac}) was shown in \cite{Zhu},
\begin{equation}
\|u\|_{L^\infty(\mathbb B_r)} \leq C
(\|W\|_{L^\infty}+1
)^{\frac{n}{2}} r^{-\frac{n}{2}}\|u\|_{L^2 (\mathbb B_{2r})}.
\end{equation}
Thus, we can see that Theorem \ref{th3} implies the doubling inequality with $L^\infty$ norm
\begin{equation}
\|u\|_{L^\infty(\mathbb B_{2r}(x))}\leq e^{ CM^{\frac{1}{3}}}\|u\|_{L^\infty(\mathbb B_{r}(x))}
\label{comeon}
\end{equation}
for any $x\in \mathcal{M}$ and $0<r<r_0$, where $r_0$ depends only on $\mathcal{M}$. We may assume that $\|u\|_{L^\infty(\mathcal{M})}=1$. So there exists some point $x_0$ such that $\|u\|_{L^\infty(\mathcal{M})}=|u(y_0)|=1$. For any point $x_0\in \mathcal{M}$, there exists a geodesic $l$ connecting $x_0$ and $y_0$. We choose a sequence of point $x_0, x_1, \cdots, x_m=y_0$ such that $x_i\in l$ and $\mathbb B_r(x_{i+1})\subset \mathbb B_{2r}(x_{i})$ for $i=0, \cdots, {m-1}$.  It is true that the number $$m\leq C\log_2\frac{\diam{\mathcal{M}}}{r}.$$ Applying the $L^\infty$ norm of the doubling inequality with iteration and using the fact that $$\|u\|_{L^\infty(\mathbb B_r(x_{i+1}))}\leq \|u\|_{L^\infty(\mathbb B_{2r}(x_{i}))},$$ we obtain that
\begin{align}
\|u\|_{L^\infty(\mathbb B_r(x_0))}&\geq e^{-CM^{\frac{1}{3}}\log_2\frac{\diam{\mathcal{M}}}{r}}\|u\|_{L^\infty(\mathbb B_r(y_0))} \nonumber \\
&\geq C r^{CM^{\frac{1}{3}}},
\end{align}
where $C$ depends on the manifold $\mathcal{M}$. This implies that the vanishing order of solution is less than $CM^{\frac{1}{3}}$. Since $x_0$ is an arbitrary point, we get such vanishing rate of solutions for every point on the manifold $\mathcal{M}$. Therefore, we complete the proof of the corollary.
\end{proof}

\section{Implicit upper bound of nodal sets}
In this section, we obtain an upper bound for the nodal sets of bi-Laplace equation (\ref{bi-Laplace}). Such type bound have been obtained for the measure of singular sets for semi-linear elliptic equations and higher order elliptic equations by Han, Hardt and Lin \cite{HHL}, \cite{HHL1}. The method is based on a compactness argument and an iteration procedure.  The iteration argument was first developed by Hart and Simon \cite{HS}. We adapt such compactness argument to obtain the measure of nodal sets for (\ref{bi-Laplace}). For higher order elliptic equations, it seems hard to get Hart and Simon's exponential upper bound result for nodal sets, even if the explicit vanishing order is achieved, since nodal sets comparison lemma in \cite{HS} is not known for higher order derivatives.

 The method applies to higher order elliptic equations without variational structure. Hence
we consider general fourth order homogeneous elliptic equations in $\mathbb B_1(0)\subset \mathbb R^n$ given by
\begin{equation}
Lu=\sum^4_{|\nu|=0} a_{\nu}(x) D^\nu u=0,
\label{impli}
\end{equation}
where $a_{\nu}(x)$ is a smooth function for $|\nu|\geq 1$,  $a_0(x)\in L^\infty$ and
\begin{align}
\sum_{|\nu|=4} a_{\nu}(x) \xi^\nu\geq \Lambda \ \mbox{for any} \ \xi\in S^{n-1} \ \mbox{and} \ x\in \mathbb B_1(0)
\label{ellip}
\end{align}
for some positive constant $\Lambda$.
 It is easy to observe that the equation (\ref{bi-Laplace}) we are considering is a particular case of the equations $Lu=0$ in (\ref{impli}). We say the operator $L\in \mathcal{L}(\Lambda, K)$ if $L$ is given by
(\ref{impli}) satisfying (\ref{ellip}) and
\begin{align}
\sum^4_{|\nu|=1} \| a_{\nu}\|_{C^\infty(\mathbb B_1)}+ \| a_0\|_{L^\infty(\mathbb B_1)}\leq K
\end{align}
for some positive constant $K$. By the standard elliptic estimates, we have
\begin{equation}
\|u\|_{C^{3, \alpha}(\mathbb B_{1-r})}\leq C\|u\|_{L^2(\mathbb B_1)}
\end{equation}
for some $0<\alpha<1$, where $C$ depends on $K$, $r$ and $n$.

We consider the geometric structure of nodal sets $\mathcal{N}(u)=\{\mathbb B_{1/2}| u(x)=0\}$. Let $\mathcal{O}(p)$ denote the vanishing order of $u$ at $p$. Then $\mathcal{N}(u)=\{p\in \mathbb B_1 : \mathcal{O}(p)\geq 1\}$. For each integer $d\geq 1$, define the $d$th level set as
\begin{align}
\mathcal{L}_d(u)=\{p\in \mathbb B_1: \mathcal{O}(p)=d\}.
\end{align}
Thus, we can write
\begin{equation}
\mathcal{N}(u)=\cup_{d\geq 1} \mathcal{L}_d(u).
\end{equation}
The following lemma shows that the Hausdorff dimension of nodal sets and the property of leading polynomials at the $n-1$ dimensional nodal sets. The lemma is directly from the Theorem 5.1 in \cite{Han}. We present most of the proof for the complete of presentation.
\begin{lemma}
If the solution $u$ satisfies (\ref{impli}) and does not vanish of infinite order, then $\mathcal{N}(u)$ is countably $(n-1)$-rectifiable. Furthermore, for $H^{n-1}$ almost all points in $\mathcal{N}(u)$, the leading polynomials of the solutions are functions of one variable after an appropriate rotation.
\label{haudim}
\end{lemma}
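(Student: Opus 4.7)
The plan is to decompose $\mathcal{N}(u)$ into the strata $\mathcal{L}_d(u)$ and analyze each stratum through the leading polynomial of the Taylor expansion. At every point $p\in\mathcal{L}_d(u)$, the hypothesis of finite vanishing order gives a nonzero homogeneous polynomial $P^d_p(y)$ of degree $d$ such that $u(p+y)=P^d_p(y)+O(|y|^{d+1})$, with all derivatives of order $<d$ vanishing at $p$. Using the equation $Lu=0$, when $d\geq 4$ the principal symbol forces $L_0(p)P^d_p=0$, and for $d<4$ one obtains weaker algebraic conditions; in either case $P^d_p$ is a well-defined nonzero polynomial and the remainder is controlled by the $C^{3,\alpha}$ elliptic estimate applied to $u$.

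Next, I would further partition each $\mathcal{L}_d$ according to the geometry of the zero set $Z(P^d_p)$. Since $P^d_p$ is a nonzero homogeneous polynomial, $Z(P^d_p)$ is a real algebraic variety of dimension at most $n-1$; split $\mathcal{L}_d$ into the ``top-dimensional'' points where $Z(P^d_p)$ has an $(n-1)$-dimensional component and the ``degenerate'' points where $\dim Z(P^d_p)\leq n-2$. For the degenerate points a Federer-style covering argument, combined with the Taylor approximation and a uniform lower bound on $|P^d_p|$ off its zero set on the unit sphere, shows that this subset is contained in a countable union of sets of finite $\mathcal{H}^{n-2}$-measure, hence is $\mathcal{H}^{n-1}$-null. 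For the top-dimensional points, the Taylor expansion together with the implicit function theorem applied to an irreducible factor of $P^d_p$ identifies $\mathcal{N}(u)$ near $p$ with a $C^{1}$-graph up to an $\mathcal{H}^{n-1}$-negligible singular set, yielding countable $(n-1)$-rectifiability of $\mathcal{L}_d$ and, summing over $d$, of $\mathcal{N}(u)$.

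For the second assertion, at $\mathcal{H}^{n-1}$-almost every $p\in\mathcal{N}(u)$ the approximate tangent plane to the rectifiable set $\mathcal{N}(u)$ exists; call its unit normal $\nu$. The rescalings $y\mapsto r^{-d}u(p+ry)$ converge as $r\to 0$ to $P^d_p(y)$, so $Z(P^d_p)$ must contain the entire hyperplane $\{y\cdot\nu=0\}$. Because $P^d_p$ is a polynomial vanishing on a hyperplane, it is divisible by the linear form $y\cdot\nu$; writing $P^d_p=(y\cdot\nu)\,Q$ and iterating the same argument on the quotient (using that removing one factor of $(y\cdot\nu)$ preserves vanishing of the remaining polynomial on the tangent hyperplane modulo a lower-dimensional set), I deduce $P^d_p(y)=c(y\cdot\nu)^d$, a function of the single variable $y\cdot\nu$, after rotating coordinates so that $\nu=e_n$.

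The main obstacle I expect is a clean treatment of the degenerate stratum: showing that the set of $p\in\mathcal{L}_d$ whose leading polynomial has a zero set of dimension at most $n-2$ is itself $\mathcal{H}^{n-1}$-null. This requires a quantitative comparison between the size of $u$ in small balls around such $p$ and the size of $P^d_p$ on the unit sphere away from $Z(P^d_p)$, together with a dimension-reduction covering of the candidate bad set by cylinders over its projection onto a generic $(n-2)$-plane; the elliptic a priori estimate from \eqref{impli} enters to ensure uniform remainder bounds, and the algebraic estimate on homogeneous polynomials provides the uniform transversality needed to close the covering.
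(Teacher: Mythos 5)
The paper stratifies each $\mathcal{L}_d(u)$ by the dimension of the \emph{singular set} of the leading polynomial, namely $\mathcal{L}_d(P)=\{x\,:\,D^\nu P(x)=0\ \text{for all}\ |\nu|\le d-1\}$, which is shown to be a linear subspace with $P(x)=P(x+z)$ for $z\in\mathcal{L}_d(P)$. When $\dim\mathcal{L}_d(P)=n-1$ this translation invariance immediately forces $P=cx_1^d$ (and then the leading symbol forces $d<4$), while the points with $\dim\mathcal{L}_d(P)\le n-2$ are swept into a countably $(n-2)$-rectifiable, hence $H^{n-1}$-null, set via Han's stratification. Your proposal replaces $\mathcal{L}_d(P)$ with the full \emph{zero set} $Z(P)$ and then tries to recover the one-variable conclusion through the approximate tangent plane and repeated division by the linear form $y\cdot\nu$. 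That substitution is where the argument breaks.

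The iteration step is a genuine gap. After deducing that the tangent hyperplane $H=\{y\cdot\nu=0\}$ lies in $Z(P)$ and factoring $P=(y\cdot\nu)Q$, you claim the ``same argument'' shows $Q$ vanishes on $H$ ``modulo a lower-dimensional set.'' But division provides no information about $Q$ on $H$: the factor $(y\cdot\nu)$ already makes $P$ vanish on $H$ regardless of $Q$. Concretely, take $P(y)=y_n\bigl(y_1^2+\cdots+y_{n-1}^2\bigr)$, a nonzero homogeneous polynomial of degree $3$; since $d=3<4$ the fourth-order leading symbol imposes no constraint, so such a $P$ is admissible. Its zero set is $\{y_n=0\}\cup\{y_1=\cdots=y_{n-1}=0\}$, which up to an $H^{n-1}$-null line coincides with the hyperplane $\{y_n=0\}$, so a point with this leading polynomial passes both your ``top-dimensional'' test and the tangent-plane test, and your argument would conclude $P=cy_n^3$ --- but it is not a function of one variable, and the quotient $Q=y_1^2+\cdots+y_{n-1}^2$ does not vanish on $\{y_n=0\}$. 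The paper handles this correctly because $\mathcal{L}_3(P)=\{0\}$ for this $P$, placing such points in $\mathcal{L}_3^0(u)$ and hence in the null stratum; your stratification by $\dim Z(P)$ does not detect this, and nothing in the proposal shows that points with such ``mixed'' leading polynomials form an $H^{n-1}$-null set. To repair the argument you would essentially need to reintroduce the order-$(d-1)$ singular set of $P$ as the stratification parameter, which is exactly the device the paper uses.
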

\begin{proof}
 For each $y\in \mathbb B_{1/2}(0)\cap \mathcal{L}_d(u)$, set
 $$ u_{y, r}(x)=\frac{u(y+rx)}{(  \int_{\partial \mathbb B_r(y)} u^2)^{1/2}},\quad \quad x\in\mathbb B_2(0)$$
 for $r\in (0, \ \frac{1-|y|}{2})$. By Theorem 3.3 in \cite{Han},
 $$u_{y, r}\to P \ \mbox{in} \ L^2(\mathbb B_2(0)) \ \mbox{as}\  r \to 0. $$
 The  homogeneous polynomial $P=P_y$ satisfies
\begin{align}
\sum_{|\nu|=4} a_{\nu}(0) D^\nu P=0.
\label{eqnpoly}
\end{align}
$P$ is called
 the leading polynomial of $u$ at $y$.
Since $P$ is $d$ degree non-zero homogeneous polynomial, we introduce
\begin{align}
\mathcal{L}_d(P)=\{x| D^\nu P(x)=0 \ \mbox{for any} \ |\nu|\leq d-1\}.
\end{align}
Clearly, $\mathcal{L}_d(P)$ is not an empty set, since $0\in \mathcal{L}_d(P)$. We claim that $\mathcal{L}_d(P)$ is a linear subspace and
\begin{equation}
P(x)=P(x+z)
\label{polyno}
\end{equation}
for any $x\in \mathbb R^n$ and $z\in \mathcal{L}_d(P)$. Since $z\in \mathcal{L}_d(P)$, then
\begin{align*}
D^\nu P(z)=0 \ \mbox{for any} \ |\nu|\leq d-1.
\end{align*}
It is assumed that
\begin{align*}
P(x)= \sum_{|\alpha|=d} a_\alpha x^\alpha.
\end{align*}
Hence it is true that
\begin{align*}
P(x)= \sum_{|\alpha|=d} a_\alpha (x-z)^\alpha,
\end{align*}
which implies the identity (\ref{polyno}). Furthermore, it is easy to see that $\mathcal{L}_d(P)$ is a linear space. From the formula (\ref{polyno}), we also know that the polynomial $P$ is a function of dimension $n-$dim$\mathcal{L}_d(P)$ variables. Observe that dim$\mathcal{L}_d(P)\leq n-1$ and that dim$\mathcal{L}_d(P)\leq n-2$ for $d\geq 4$.  If dim$\mathcal{L}_d(P)=n-1$, then $P$ is a $d$-degree monomial of one variable satisfying (\ref{eqnpoly}). Then $d<4$.

We define
\begin{align}
\mathcal{L}_d^j(u)=\{ y\in \mathcal{L}_d(u); dim \mathcal{L}_d(P_y)=j\}
\end{align}
for $i=0, 1, \cdots, n-1$. Following the arguments in \cite{Han}, we can show that $\mathcal{L}_d^j$ is on a countable union of $j$-dimensional $C^1$ graphs. Next, we show that $\mathcal{L}_d^{n-1}(u)$ is on a countable union of $(n-1)$ dimensional $C^{1, \frac{\alpha}{d}}$ graphs for $d=1, 2, 3$. Let $y=0\in \mathcal{L}_d^{n-1}(u)$, by  denoting $\mathbb R^n=R^1\times \mathcal{L}_d(P)$ and the argument discussed before, $P$ is a monomial of degree $d$ in $R^1$. After an appropriate rotation, there holds
\begin{align}
u(x)=c x_1^d+\psi(x) \quad \mbox{in} \ \mathbb B_{1/2}.
\end{align}
The function $\psi$ satisfies
\begin{align}
|D^i\psi(x)|\leq C|x|^{d-i+\alpha} \ \mbox{for}  \ i=0, 1, \cdots, d
\label{psi1}
\end{align}
and
\begin{align}
|D^i\psi(x)|\leq C \ \mbox{for}  \ i=d+1, \cdots, 3.
\label{psi2}
\end{align}
For $x\in \mathcal{L}_d^{n-1}(u)\cap \mathbb B_{1/2}$, since $u(x)=0$, there holds
$$|x_1|^d\leq C|x|^{d+\alpha}.$$
Hence, the local $(n-1)$ dimensional $C^1$ graph containing $\mathcal{L}_d^{n-1}(u)$ is $C^{1, \frac{\alpha}{d}}$ in a neighborhood of $0$.
Let $\mathcal{L}^j (u)=\cup_{d\geq 1} \mathcal{L}^j_d (u)$ for $j=0, 1, \cdots, n-1$. Then
\begin{align}
\mathcal{N}(u)= \cup^{n-1}_{j=0} \mathcal{L}^j (u).
\end{align}
Each $ \mathcal{L}^j (u)$ is on a countable union of $j$-dimensional $C^1$ manifolds for $j=0, \cdots, n-1$. Set
\begin{align}
\mathcal{N}_\ast (u)= \cup^{n-2}_{j=0} \mathcal{L}^j (u), \\
\mathcal{N}^\ast (u)=  \mathcal{L}^{n-1} (u).
\end{align}
Then we have the decomposition
\begin{align}
\mathcal{N}(u)=\mathcal{N}^\ast (u)\cup \mathcal{N}_\ast (u),
\label{NNN}
\end{align}
where $\mathcal{N}_\ast (u)$ is countably $(n-2)$-rectifiable and $\mathcal{N}^\ast (u)$ is on a countable union of $(n-1)$ dimensional $C^{1, \alpha}$ manifold.
Note that for $y\in \mathcal{N}^\ast (u)$, the leading polynomial $P$ of $u$ at $y$ is a homogeneous with one variable.
\end{proof}

The next proposition states that the nodal sets can be decomposed into a good part and a bad part. The good part has a measurable upper estimate and the bad part is covered by the small balls.

\begin{proposition}
There exist positive constants $C(u)$ and $\epsilon(u)$ depending on the solution $u$ and a finite collection of balls $\mathbb B_{r_i}(x_i)$ with $r_i\leq \frac{1}{10}$ and $x_i\in \mathcal{N}(u)$ such that for any  $v\in C^3$ with
\begin{align}
\|u-v\|_{C^3(\mathbb B_1)}\leq \epsilon(u),
\end{align}
there holds
\begin{align}
H^{n-1}\big(\mathcal{N}(v)\cap \mathbb B_{1/2}\backslash \bigcup B_{r_i}(x_i)\big)< C(u)
\end{align}
and
$$\sum r_i^{n-1} \leq \frac{1}{2^n},  $$
where $C(u)$ depends on $u$ and coefficients of the operator $L$.
\label{prohau}
\end{proposition}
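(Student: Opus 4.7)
\textbf{Proof plan for Proposition \ref{prohau}.}

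The plan is to use Lemma \ref{haudim} to split $\mathcal{N}(u)\cap\overline{\mathbb{B}_{1/2}}$ into a codimension-one part $\mathcal{N}^\ast(u)$ and a lower-dimensional part $\mathcal{N}_\ast(u)$, absorb the latter into the ``bad'' balls $\mathbb{B}_{r_i}(x_i)$, and control $H^{n-1}(\mathcal{N}(v))$ on the remainder by a quantitative, $C^3$-stable zero-counting estimate on neighborhoods of points of $\mathcal{N}^\ast(u)$. Then a compactness argument over the compact set $\mathcal{N}(u)\cap\overline{\mathbb{B}_{1/2}}$ converts this local information into the global statement.

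First, I would invoke Lemma \ref{haudim} to write
\[
\mathcal{N}(u)\cap \overline{\mathbb{B}_{1/2}} \;=\; \mathcal{N}^\ast(u)\,\cup\,\mathcal{N}_\ast(u),
\]
where $\mathcal{N}_\ast(u)$ is countably $(n-2)$-rectifiable (in particular $H^{n-1}(\mathcal{N}_\ast(u))=0$) and $\mathcal{N}^\ast(u)$ lies in a countable union of $(n-1)$-dimensional $C^{1,\alpha}$ graphs. Since $\mathcal{N}_\ast(u)\cap\overline{\mathbb{B}_{1/2}}$ has vanishing $(n-1)$-measure, I can cover it by countably (hence, after passing to a finite subcover of a slight enlargement, finitely) many balls $\mathbb{B}_{r_i}(x_i)$ with $x_i\in\mathcal{N}(u)$, $r_i\le \tfrac{1}{10}$, and $\sum r_i^{n-1}<2^{-n-1}$. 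A small buffer is kept in the constraint $\sum r_i^{n-1}\le 2^{-n}$ so that a few more balls can be added later if necessary.

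Next, for each $y\in\mathcal{N}^\ast(u)$, Lemma \ref{haudim} gives, after a rotation, a representation $u(x)=c(x_1-y_1)^d+\psi(x-y)$ with $c\neq 0$, $d\in\{1,2,3\}$, and $\psi$ obeying (\ref{psi1})--(\ref{psi2}). In particular $\partial_1^d u(y)=c\cdot d!\neq 0$, so by continuity there exists $\rho(y)\in(0,\tfrac{1}{10})$ with $|\partial_1^d u|\ge \tfrac{|c|d!}{2}$ on $\mathbb{B}_{\rho(y)}(y)$. For any $v$ with $\|u-v\|_{C^3(\mathbb{B}_1)}\le\varepsilon(y)$ small enough we still have $|\partial_1^d v|\ge \tfrac{|c|d!}{4}$ on $\mathbb{B}_{\rho(y)}(y)$. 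By iterated Rolle's theorem, on every line parallel to the $x_1$-axis, $v$ vanishes at most $d\le 3$ times inside $\mathbb{B}_{\rho(y)}(y)$. Combining this one-dimensional counting with a coarea / projection bound (and the fact that $\mathcal{N}(v)\cap \mathbb{B}_{\rho(y)}(y)$ is a finite union of $(n-1)$-rectifiable pieces which project with multiplicity $\le d$ onto the hyperplane $\{x_1=y_1\}$) yields
\[
H^{n-1}\bigl(\mathcal{N}(v)\cap \mathbb{B}_{\rho(y)/2}(y)\bigr)\le C(y),
\]
uniformly for $\|u-v\|_{C^3}\le\varepsilon(y)$.

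Finally, the compact set $K:=\bigl(\mathcal{N}(u)\cap\overline{\mathbb{B}_{1/2}}\bigr)\setminus \bigcup_i \mathbb{B}_{r_i}(x_i)$ is contained in $\mathcal{N}^\ast(u)$; cover $K$ by the open balls $\mathbb{B}_{\rho(y)/2}(y)$ and extract a finite subcover $\{\mathbb{B}_{\rho_j/2}(y_j)\}_{j=1}^N$. Choose $\varepsilon(u):=\min_j\varepsilon(y_j)$ and $C(u):=\sum_j C(y_j)$. If there are points of $\mathcal{N}(v)$ near $K$ that lie outside the union $\bigcup_j\mathbb{B}_{\rho_j/2}(y_j)\cup\bigcup_i\mathbb{B}_{r_i}(x_i)$, they necessarily lie within a small tubular neighborhood of the boundary of that union, which can be swallowed by enlarging finitely many $r_i$ (still keeping $\sum r_i^{n-1}\le 2^{-n}$, thanks to the buffer). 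This gives
\[
H^{n-1}\!\Bigl(\mathcal{N}(v)\cap \mathbb{B}_{1/2}\setminus \bigcup_i \mathbb{B}_{r_i}(x_i)\Bigr)\le C(u).
\]

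The main obstacle is the quantitative zero-counting step: turning the lower bound on $|\partial_1^d v|$ into an $H^{n-1}$-bound for $\{v=0\}$, uniformly in $v$. The dominant sign of $\partial_1^d v$ produces at most $d$ zeros on each $x_1$-line by iterated Rolle, but to pass from pointwise fiber counts to an $H^{n-1}$ bound one must control the ``verticality'' of $\{v=0\}$. This is where the perturbative smallness of $\psi$, together with $d\le 3$ and $C^3$-closeness of $v$ to $u$, is essential; standard Lipschitz-graph decomposition arguments in the spirit of Han--Hardt--Lin \cite{HHL}, \cite{HHL1} carry this through.
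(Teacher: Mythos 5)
Your overall architecture agrees with the paper: invoke Lemma~\ref{haudim} to split $\mathcal{N}(u)$ into the codimension-one part $\mathcal{N}^\ast(u)$ and the $(n-2)$-rectifiable part $\mathcal{N}_\ast(u)$, absorb the latter into small balls with $\sum r_i^{n-1}\le 2^{-n}$, prove a local quantitative bound on $H^{n-1}(\mathcal{N}(v))$ near points of $\mathcal{N}^\ast(u)$ that is stable under $C^3$ perturbation, and patch these together by compactness. However, the local zero-counting step — which you yourself flag as the main obstacle — has a genuine gap that your outline does not resolve. Knowing that $\partial_1^d v$ is nonvanishing, hence that $v$ has at most $d$ zeros on each line parallel to $e_1$, does \emph{not} bound $H^{n-1}(\{v=0\})$: the orthogonal projection onto $\{x_1=\text{const}\}$ is $1$-Lipschitz, so bounded fiber multiplicity controls only the projected area, not the area of the nodal set itself, and pieces of $\{v=0\}$ nearly parallel to $e_1$ can have arbitrarily large $H^{n-1}$ while still meeting each $e_1$-line in at most $d$ points. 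Appealing to a ``Lipschitz-graph decomposition'' does not clearly close this for $d=2,3$, where $\nabla u$ vanishes at $y$ and the nodal set near $y$ is typically a H\"older rather than Lipschitz graph in the $e_1$-direction.

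The paper handles this differently and more robustly. Since $P=cx_1^d$, one can choose an orthonormal frame $\{e_1,\dots,e_n\}$ (none orthogonal to $x_1$) so that $D^d_{e_i}P$ is a nonzero constant for \emph{every} $i=1,\dots,n$, not just one direction. The $C^3$-smallness of the perturbation then gives $D^d_{e_i}\tilde v\neq 0$ on a fixed ball for each $i$, hence at most $d+1$ zeros on each line parallel to each $e_i$, and Federer's integral geometric inequality 3.2.22 converts these $n$ families of fiber counts into $H^{n-1}(\tilde v^{-1}(0)\cap \mathbb{B}_r)\le C(n)(d+1)r^{n-1}$ with no graph structure needed. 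You should replace the single-direction Rolle argument plus graph heuristic with this $n$-direction count and the integral geometric inequality. A secondary point: the paper covers the escape of $\mathcal{N}(v)\cap\mathbb{B}_{1/2}$ from the finite cover by first choosing $\rho>0$ so the $\rho$-neighborhood of $\mathcal{N}(u)\cap\mathbb{B}_{3/4}$ lies inside the union, then taking $\eta>0$ so $\|u-v\|_{C^1}<\eta$ forces $\mathcal{N}(v)\cap\mathbb{B}_{1/2}$ into that neighborhood; your alternative of ``enlarging finitely many $r_i$ to swallow a tubular neighborhood of the boundary'' is vaguer and risks breaking the $\sum r_i^{n-1}\le 2^{-n}$ budget, so you should adopt the cleaner continuity argument.
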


\begin{proof}
It follows from the relation (\ref{NNN}) that the set $\mathcal{N}_\ast(u)$ has dimension less than $n-1$. Thus,
\begin{equation}
H^{n-1}(\mathcal{N}_\ast(u))=0.
\end{equation}
By the definition of Hausdorff measure, there exist at most countably many balls $\mathbb B_{r_i}(x_i)$ with $r_i\leq \frac{1}{10}$ and $x_i\in \mathcal{N}_\ast(u)$ such that
\begin{align}
\mathcal{N}_\ast(u)\subset \cup_{i} \mathbb B_{r_i}(x_i)
\end{align}
and
\begin{align}
\sum r_i^{n-1}\leq \frac{1}{2^n}.
\end{align}
We consider the set $\mathcal{N}^\ast(u)\cap \mathbb B_{3/4}$. We claim that, for any $y\in \mathcal{N}^\ast(u)\cap \mathbb B_{3/4}$, there exist positive constants $R(y, u)<\frac{1}{10}$, $r(y, u)$, $\delta=\delta(y, u)$ and $C=C(y, u)$ with $r<R$ such that
\begin{align}
H^{n-1}\big(  \mathcal{N}(v)\cap \mathbb B_{r}(y)\big)\leq C r^{n-1},
\label{sclaim}
\end{align}
if the function $v$ satisfies
\begin{align}
\| u-v\|^\ast_{C^3(\mathbb B_R(y))}\leq \delta.
\label{under}
\end{align}
Here the norm $\|\cdot\|^\ast_{C^m(\mathbb B_R)}$ is defined as
\begin{align*}
\|f\|^\ast_{C^m(\mathbb B_R)}=\sum^m_{i=0} R^i \sup_{x\in \mathbb B_R}| D^i f(x)|
\end{align*}
for any $f\in C^m(\mathbb B_R)$.

By the compactness of $\mathcal{N}(u)$, there exist $x_i\in \mathcal{N}_\ast(u)$ and $y_i \in \mathcal{N}^\ast(u)$ for $i=1, \cdots, m(u)$ and $j=1, \cdots, k(u)$ such that
\begin{align}
\mathcal{N}(u)\cap  \mathbb B_{3/4}\subset (\bigcup_{i=1}^{m(u)} \mathbb B_{r_i}(x_i)) \cap (\bigcup_{j=1}^{k(u)} \mathbb B_{s_i}(y_i))
\end{align}
with $r_i\leq \frac{1}{10}$ and $s_i\leq \frac{1}{10}$. By the compactness of $\mathcal{N}(u)$ again,
there exists a positive constant $\rho=\rho(u)$ such that
\begin{align}
\{ x\in \mathbb B_{3/4}; \dist(x, \mathcal{N}(u))< \rho\}\subset (\bigcup_{i=1}^{m(u)} \mathbb B_{r_i}(x_i)) \cap (\bigcup_{j=1}^{k(u)} \mathbb B_{s_i}(y_i)).
\end{align}
For such a $\rho$, we can find a positive constant $\eta=\eta(u)$ such that
\begin{align}
\mathcal{N}(v)\cap  \mathbb B_{1/2} \subset \{ x\in \mathbb B_{3/4}; \dist(x, \mathcal{N}(u))< \rho\}
\end{align}
if $\|u-v\|_{C^1(\mathbb B_{3/4})}<\eta$. For the convenience of the presentation, let
\begin{align*}
\mathcal{B}_u^1 =\cup^m_{i=1} \mathbb B_{r_i}(x_i), \quad \quad \mathcal{B}_u^2 =\cup^k_{i=1} \mathbb B_{s_i}(y_i).
\end{align*}
We take $\epsilon(u)<\eta(u)$ small enough. For any $v\in C^3$ in $\mathbb B_1$, if
\begin{align}
\|u-v\|_{C^3}<\epsilon(u),
\label{ceps}
\end{align}
then
\begin{align}
\|u-v\|^\ast_{C^3(\mathbb B_R(y_i))}<\delta(y_i, u)
\end{align}
for $i=1, \cdots, k=k(u)$. Thus, from the previous arguments and (\ref{sclaim}), we obtain that
\begin{align}
\mathcal{N}(v)\cap  \mathbb B_{1/2}\subset ( \mathcal{N}(v)\cap \mathcal{B}_u^1)\bigcup ( \mathcal{N}(v)\cap \mathcal{B}_u^2)
\end{align}
and
\begin{align}
H^{n-1}(\mathcal{N}(v)\cap  \mathcal{B}_u^2)\leq C\sum^{k(u)}_{j=1} s_j^{n-1}=C(u).
\label{reca1}
\end{align}
Recall that
\begin{align}
\mathcal{B}_u^1 =\cup^m_{i=1} \mathbb B_{r_i}(x_i) \ \mbox{with} \ \sum^k_{i=1} r_i^{n-1}\leq \frac{1}{2^n}.
\label{reca2}
\end{align}
Hence the proof of the theorem follows from (\ref{reca1}) and (\ref{reca2}). We are left to prove the claim (\ref{sclaim}). Thanks to the arguments in Lemma \ref{haudim}, for any $y\in \mathcal{N}^\ast(u)\cap \mathbb B_{3/4}$, there holds
\begin{equation}
u(x+y)=P(x)+\psi(x),
\end{equation}
where $P$ is a non-zero $d$-degree monomial with $1\leq d\leq 3$ and $\psi$ satisfies (\ref{psi1}) and (\ref{psi2}). Thus, we can take a positive constant $R=R(y, u)<\frac{1}{10}$ such that
\begin{align}
\|\frac{1}{R^d} \psi\|^\ast_{C^3(\mathbb B_R)}<\frac{\epsilon_\ast}{2}.
\end{align}
Choosing $\delta$ so small that (\ref{under}) implies that
\begin{align}
\|\frac{1}{R^d} (u-v)\|^\ast_{C^3(\mathbb B_R(y))}<\frac{\epsilon_\ast}{2},
\end{align}
then there holds that
\begin{align}
\|\frac{1}{R^d} (v-P(\cdot-y))\|^\ast_{C^3(\mathbb B_R(y))}<\epsilon_\ast.
\end{align}
By considering the transformation $x\to y+Rx$, we obtain that
\begin{align}
\|\frac{1}{R^d} v(y+R\cdot)-P\|_{C^3(\mathbb B_1)}<\epsilon_\ast.
\end{align}
Since $P=C x_1^d$ for $1\leq d\leq 3$, we can find an orthonormal basis $\{ e_1, \cdots, e_n\}$ in $\mathbb B_1$ such that
\begin{align}
D_{e_i}^d(P) \ \mbox{is a nonzero constant for any} \ i=1, \cdots, n.
\end{align}
Therefore, there exist positive constants $r=r(y, P)$ and $\epsilon_\ast=\epsilon_\ast(y, P)$ such that if the function $v\in C^d$ satisfies
\begin{align}
\| P-\frac{1}{R^d} v(y+ R\cdot)\|_{C^3(\mathbb B_r)}\leq \epsilon_\ast,
\end{align}
then $D_{e_i}^d v(y+R\cdot)$ is never zero in $\mathbb B_r(y)$ for any $i=1, \cdots, n$. By using one dimensional mean value theorem $d$ times, we conclude that there can not be more than $d+1$ zeros for $\frac{1}{R^d} v(y+ R\cdot)$ in any line parallel to $e_i$ for any $i=1, \cdots, n$. Let $z_i$ be the variable in the $e_i$ direction. We set $\pi_i$ as the projection
\begin{align*}
\pi_i(z_1, z_2,\cdots, z_n)=(z_1, \cdots, z_{i-1}, z_{i+1}, \cdots, z_n)\in \mathbb R^{n-1}.
\end{align*}
Denote $\frac{1}{R^d} v(y+ R\cdot)$ as $\tilde{v}$. Thus, for any $q\in \mathbb B_r^{n-1}\subset \mathbb R^{n-1}$ and $1\leq i\leq n$, we have
\begin{align*}
card( \tilde{v}^{-1}(0)\cap \pi^{-1}_i(q)\cap \mathbb B_r)\leq (d+1).
\end{align*}
From the integral geometric formula 3.2.22 in \cite{F}, we derive that
\begin{align}
H^{n-1}(\tilde{v}^{-1}(0)\cap \mathbb B_r) &\leq \sum_{1\leq i\leq n}\int_{\mathbb B_r^{n-1}} card\big( \tilde{v}^{-1}(0)\cap \pi^{-1}_i(q)\cap \mathbb B_r\big) \,d H^{n-1} \nonumber\\
& \leq C(n)(d+1) r^{n-1}.
\end{align}
See the similar arguments in \cite{Yo}. After transforming back to $\frac{1}{R^d} v(y+ R\cdot)$ in  $\mathbb B_R(y)$, we have for $r\leq R r_\ast$,
\begin{align}
H^{n-1}(v^{-1}(0)\cap \mathbb B_r(y))\leq C r^{n-1}.
\end{align}
Thus, the claim (\ref{sclaim}) follows. Therefore, the proposition is shown.

\end{proof}

We consider the translation and rescaling property of the operator $L$.
Let $L_{x_0, \rho}$ be defined by
\begin{align*}
L_{x_0, \rho}=\sum^4_{|\nu|=0} \rho^{4-|\nu|} a_\nu(x_0+\rho x) D^\nu.
\end{align*}
Observe that $L_{x_0, \rho}\in \mathcal{L}(\Lambda, K)$.

To control the vanishing order quantitatively, we introduce the quantitative doubling inequality. A function is said to be in $D_N$ if
\begin{align}
\|u\|_{L^2(\mathbb B_{2r}(x_0))} \leq 2^N \|u\|_{L^2(\mathbb B_{r}(x_0))}
\label{DNDN}
\end{align}
for $x_0\in \mathbb B_{2/3}$ and $0<2r<\dist(x_0, \ \partial\mathbb B_1)$.
We define $D_N^\ast$ as the collection of all functions $u$ in $D_N$ satisfying $Lu=0$ in $\mathbb B_1$ for some $L\in \mathcal{L}(\Lambda, K)$.
By the standard elliptic estimates, the collection
$$\{ u\in D_N^\ast; \int_{\mathbb B_{1/2}}u^2\, dx=1\}  $$
is compact under the local $L^\infty$ metric. See the lemma 4.1 in \cite{HHL1}. Next we show the upper bound estimates of nodal sets by removing a finite collection of small balls.
\begin{lemma}
There exists $C$ depending on $K$, $N$ and $\lambda$ such that for any $u\in D_N^\ast$, there exists a finite collection of balls $\{\mathbb B_{r_i}(x_i)\},$ with $r_i\leq \frac{1}{4}$ and $x_i\in \mathcal{N}(u)$ such that there hold
\begin{align}
H^{n-1}\big( \mathcal{N}(u)\cap \mathbb B_{1/2}\backslash \bigcup\mathbb B_{r_i}(x_i)\big)\leq C
\end{align}
and
$$ \sum r^{n-1}_i\leq \frac{1}{2}. $$
\label{le13}
\end{lemma}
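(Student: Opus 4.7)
The plan is to upgrade the $u$-dependent constants from Proposition \ref{prohau} to constants depending only on $N$, $K$, and $\Lambda$, using a compactness argument on a suitably normalized class. Since both $\mathcal N(u)$ and the two target inequalities are homogeneous in $u$, one may normalize without loss of generality and work with
\begin{equation*}
\mathcal K \;=\;\{u \in D_N^\ast \,:\, \|u\|_{L^2(\mathbb B_{1/2})}=1\}.
\end{equation*}

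First I would show that $\mathcal K$ is precompact in $C^3(\mathbb B_{3/4})$. The doubling bound (\ref{DNDN}) controls $\|u\|_{L^2(\mathbb B_{1})}$ by a constant depending only on $N$, and interior elliptic regularity for any operator $L\in\mathcal L(\Lambda,K)$ then controls $\|u\|_{C^{3,\alpha}(\mathbb B_{3/4})}$ uniformly in $u\in\mathcal K$. Since $\mathcal L(\Lambda,K)$ itself is precompact in any fixed $C^m$ topology and the doubling inequality passes to $L^2$-limits, any sequence in $\mathcal K$ admits a $C^3$-convergent subsequence whose limit again satisfies (\ref{DNDN}), has unit $L^2$-norm on $\mathbb B_{1/2}$, and solves a limit equation in $\mathcal L(\Lambda,K)$, hence lies in $\mathcal K$.

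Next I would apply Proposition \ref{prohau} to every $u\in\mathcal K$, obtaining $C(u),\epsilon(u)>0$ together with a finite ball family with centers in $\mathcal N(u)$. The $C^3$-open neighborhoods $\{v:\|u-v\|_{C^3(\mathbb B_1)}<\epsilon(u)\}$ cover $\mathcal K$; by compactness finitely many of them, centered at $u_1,\dots,u_m$, suffice. Setting $C_0=\max_{1\le j\le m} C(u_j)$, any $u\in\mathcal K$ is $\epsilon(u_j)$-close in $C^3$ to some $u_j$, so the ball collection supplied by Proposition \ref{prohau} for $u_j$ bounds the uncovered nodal measure of $u$ by $C_0$.

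The last wrinkle, which I expect to be the main technical point, is that the centers $x_i(u_j)$ of these balls lie in $\mathcal N(u_j)$ rather than $\mathcal N(u)$, while the lemma requires centers in $\mathcal N(u)$ and allows radii up to $1/4$ instead of the $1/10$ of the proposition. I would resolve this by a two-step surgery: first discard any ball disjoint from $\mathcal N(u)$ (this does not change $\mathcal N(u)\cap\mathbb B_{1/2}\setminus\bigcup_i \mathbb B_{r_i(u_j)}(x_i(u_j))$, since the discarded ball contributed no nodal points of $u$ to the cover); second, for every surviving ball $\mathbb B_{r_i(u_j)}(x_i(u_j))$ select a point $\tilde x_i\in \mathcal N(u)\cap \mathbb B_{r_i(u_j)}(x_i(u_j))$ and replace the ball by $\mathbb B_{2r_i(u_j)}(\tilde x_i)$, which contains the original. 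The new centers lie in $\mathcal N(u)$, the new radii satisfy $2r_i(u_j)\le 1/5<1/4$, the $H^{n-1}$-bound by $C_0$ is preserved because the cover has only grown, and
\begin{equation*}
\sum_i \bigl(2r_i(u_j)\bigr)^{n-1}\le 2^{n-1}\cdot 2^{-n}=\tfrac{1}{2},
\end{equation*}
which matches exactly the constant claimed in Lemma \ref{le13}. Once $\mathcal K$ is shown to be $C^3$-compact the covering and center-replacement steps are routine; the genuine work lies entirely in establishing that compactness uniformly across all admissible coefficient tuples in $\mathcal L(\Lambda,K)$.
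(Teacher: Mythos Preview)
Your proposal is correct and follows essentially the same route as the paper: normalize to unit $L^2(\mathbb B_{1/2})$-norm, use the doubling bound plus interior elliptic regularity to obtain compactness of the normalized class (the paper phrases this as compactness in local $L^\infty$ and then bootstraps to $C^{3,\alpha}$, citing \cite{HHL1} Lemma~4.1), extract a finite subcover from the Proposition~\ref{prohau} neighborhoods, and then perform exactly the recentering trick you describe---discarding balls missing $\mathcal N(u)$ and doubling the radius of the rest---so that $\sum(2r_i)^{n-1}\le 2^{n-1}\cdot 2^{-n}=\tfrac12$. One small imprecision: $\mathcal L(\Lambda,K)$ is not precompact in a $C^m$ topology since $a_0$ is only $L^\infty$-bounded, but weak-$\ast$ compactness of $a_0$ together with strong convergence of $u$ suffices to pass to the limit in the equation, which is all that is needed.
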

\begin{proof}
Define $D^1_N$ to be the set
$$\{ u\in D_N^\ast; \int_{\mathbb B_{1/2}}u^2\, dx=1\}.  $$
Let $u_0$ be an arbitrary solution in $D^1_N$. For any $u\in D^1_N$, if $\|u-u_0\|_{L^\infty (\mathbb B_{7/8})}\leq \epsilon_0$, by standard elliptic estimates,
$$\|u-u_0\|_{C^{3, \alpha} (\mathbb B_{3/4})}\leq C(\epsilon_0), $$
where $C(\epsilon_0)\to 0$ as $\epsilon_0\to 0$. We can take $\epsilon_0$ small so that $C(\epsilon_0)<\epsilon(u_0)$, where $\epsilon(u_0)$ is the constant in (\ref{ceps}). With the aid of Proposition \ref{prohau}, there exist a positive constant $C(u_0)$ and finitely many balls $\{\mathbb B_{r_i}(x_i)\}$ with $x_i\in \mathcal{N}(u_0)$ and $r_i\leq \frac{1}{10}$, such that for any $u\in D^1_N$ and $\|u-u_0\|_{L^\infty (\mathbb B_{7/8})}\leq \epsilon_0$, there holds
\begin{align}
H^{n-1}\big( \mathcal{N}(u)\cap \mathbb B_{1/2}\backslash \bigcup\mathbb B_{r_i}(x_i)\big)\leq C(u_0)
\end{align}
and
$$ \sum r^{n-1}_i\leq \frac{1}{2}. $$
If $\mathcal{N}(u)\cap \mathbb B_{r_i}(x_i)\not =\emptyset $, we may take some point $\tilde{x}_i$ in $\mathcal{N}(u)\cap \mathbb B_{r_i}(x_i)$. Clearly, it holds that $\mathbb B_{r_i}(x_i)\subset \mathbb B_{2r_i}(\tilde{x}_i)$. We may rearrange the center and radius.
Thus, we can still find a finite collection of balls $\{\mathbb B_{r_i}(x_i)\}$ with $x_i\in \mathcal{N}(u)$ and $r_i\leq \frac{1}{4}$ such that
\begin{align}
H^{n-1}\big( \mathcal{N}(u)\cap \mathbb B_{1/2}\backslash \bigcup \mathbb B_{r_i}(x_i)\big)\leq C(u_0)
\end{align}
and
$$ \sum r^{n-1}_i\leq \frac{1}{2}. $$
Since $D_N^1$ is compact under the local $L^\infty$ norm, there exists $u_1, u_2, \cdots, u_p\in D^1_N$ and $\epsilon_1=\epsilon_1(u), \cdots,
\epsilon_p=\epsilon_p(u)$ such that for any $u\in D_N^1$, there exists a $1\leq k\leq p$ satisfying the property
$$ \|u-u_k\|_{L^\infty (\mathbb B_{7/8})}\leq \epsilon_k\leq \epsilon_0. $$
Denote
$$ C=\max\{ C(u_1), \cdots, C(u_p)\}.$$
This $C$ depends on the class of $D^\ast_N$. Thus, we complete the proof.
\end{proof}
Now we are ready to prove Theorem \ref{th4} in the section. We apply the standard iteration arguments in \cite{HS}.
\begin{proof}[Proof of Theorem \ref{th4}]
First, we define
$$\phi_0=\{\mathbb B_{1/2}(0)\}.$$
We claim that we can find $\phi_1, \phi_2, \cdots, $ each of which is a collection of balls such that
\begin{align}
rad(\mathbb B)\leq \frac{1}{2} (\frac{1}{2})^l \ \mbox{for any} \ \mathbb B\in \phi_l,
\label{induc1}
\end{align}
\begin{align}
\sum_{\mathbb B\in\phi_l} [rad(\mathbb B)]^{n-1}\leq (\frac{1}{2})^l,
\label{induc2}
\end{align}
and
\begin{align}
H^{n-1}\big( \mathcal{N}(u)\cap \bigcup_{\mathbb B\in \phi_{l-1}} \mathbb B \backslash \bigcup_{\mathbb B\in \phi_{l}} \mathbb B \big)\leq C(\frac{1}{2})^{l-1}
\label{induc3}
\end{align}
for $l\geq 1$, where $C$ is the positive constant in Lemma \ref{le13}. We prove the claim by constructing $\{\phi_l\}$ using induction.
Note that $\phi_0=\{\mathbb B_{1/2}(0)\}$. Suppose that the assumptions (\ref{induc1})--(\ref{induc3}) hold for $l-1$. We construct $\phi_l$. Taking any $\mathbb B= \mathbb B_r(y)\in \phi_{l-1}$, by the transformation $x\to y+2rx$, via $Lu=0$ in $\mathbb B_{2r}(y)$, we have
$\hat{L} \hat{u}=0$ in $\mathbb B_1$ with
$$ \hat{L}=\sum^4_{|\nu|=0} (2r)^{4-|\nu|} a_{\nu}(y+2rx) D_x^\nu $$
and $\hat{u}(x)=u(y+2rx)$. We observe that $\hat{u}\in D^\ast_N$. Applying Lemma \ref{le13}, we obtain a collection of balls $\{\mathbb B_{s_i}(z_i)\}$ with $s_i\leq \frac{1}{4}$ and $z_i\in \mathcal{N}(\hat{u})$ such that
$$ H^{n-1}\big( \mathcal{N}(\hat{u})\cap \mathbb B_{1/2}\backslash \mathbb B_{s_i}(z_i)\big)\leq C   $$
and
$$ \sum s_i^{n-1}\leq \frac{1}{2}.  $$
Rescaling $\mathbb B_{1/2}(0)$ back to $\mathbb B_{r}(y)$ by $x \mapsto \frac{x-y}{2r}$ gives that, for $\mathbb B=\mathbb B_r(y)\in \phi_{l-1}$, there exist finitely many balls $\{\mathbb B_{r_i}(x_i)\}$ in $\mathbb B_{2r}(y)$ with $r_i\leq \frac{r}{2}$, such that
\begin{align*}
H^{n-1}\big( \mathcal{N}({u})\cap \mathbb B_{r}(y)\backslash \bigcup\mathbb B_{r_i}(x_i)\big)\leq C  r^{n-1}
\end{align*}
and
$$ \sum r_i^{n-1}\leq \frac{1}{2} r^{n-1}.  $$
For such $\mathbb B_r(y)$, we set
$$ \phi^B_{l}=\bigcup \{\mathbb B_{r_i}(x_i)\}  $$
and construct $\phi_l$ as
$$ \phi_l=\bigcup_{\mathbb B\in \phi_{l-1}} \phi_l^B. $$
Applying Lemma \ref{le13} gives that
\begin{align}
H^{n-1}\big( \mathcal{N}(u)\cap \bigcup_{\mathbb B\in \phi_{l-1}} \mathbb B \backslash \bigcup_{\mathbb B\in \phi_{l}} \mathbb B \big)\leq C\big( \sum_{\mathbb B_{r_i}(x_i)\in\phi_{l-1}} r_i^{n-1}\big).
\end{align}
By induction, we obtain that, for $\mathbb B_{r_i}\in \phi_l$,
\begin{align}
r_i\leq \frac{1}{2} (\frac{1}{2})^l, \quad \  \sum_{\mathbb B_{r_i}(x_i)\in\phi_{l}} r_i^{n-1}\leq (\frac{1}{2})^l,
\label{show1}
\end{align}
and
\begin{align}
H^{n-1}\big( \mathcal{N}(u)\cap \bigcup_{\mathbb B\in \phi_{l-1}} \mathbb B \backslash \bigcup_{\mathbb B\in \phi_{l}} \mathbb B \big)\leq C(\frac{1}{2})^{l-1}.
\label{show2}
\end{align}
Thus, we have shown the claim (\ref{induc1})--(\ref{induc3}).

Since
\begin{align*}
\mathcal{N}(u)\cap\mathbb B_{1/2}(0)\subset \bigcup^\infty_{l=1}\big( \mathcal{N}(u)\cap \bigcup_{\mathbb B\in \phi_{l-1}} \mathbb B \backslash \bigcup_{\mathbb B\in \phi_{l}} \mathbb B   \big) \cup \bigcap_{l=0}\big(\mathcal{N}(u)\cap \bigcup^\infty_{j=l} \bigcup_{\mathbb B\in \phi_j} \mathbb B   \big),
\end{align*}
it follows from (\ref{show1}) and (\ref{show2}) that
\begin{align}
H^{n-1}\big(\mathcal{N}(u)\cap\mathbb B_{1/2}(0)\big) \leq C \{ \sum_{l\geq 1}(\frac{1}{2})^{l-1}+\inf_{l\geq 1} \sum^\infty_{j=l} (\frac{1}{2})^j\}\leq C.
\end{align}
Therefore, we conclude that
\begin{align}
H^{n-1}(x\in \mathbb B_{1/2}| u=0)\leq C(N).
\label{immi}
\end{align}
From Theorem \ref{th3}, we learn that the doubling inequality (\ref{DNDN}) holds for any $x_0\in \mathcal{M}$ with $N\leq CM^\frac{1}{3}$. Thus, it follows from (\ref{immi}) that
 \begin{align}
H^{n-1}(x\in \mathbb B_{r_0}| u=0)\leq C(r_0, M).
\end{align}
for any $\mathbb B_{r_0} \in  \mathcal{M}$. Since the manifold $\mathcal{M}$ is compact, by covering the manifold by finitely many balls, we can derive the conclusion in Theorem \ref{th4}.
\end{proof}

\section{Quantitative Cauchy uniqueness}

We prove a propagation of smallness results for bi-Laplace equations (\ref{bi-Laplace}) in this section. The similar results for second order elliptic equations have been shown by Lin in \cite{Lin}, where the proof is a little sketchy. We provide the detailed proof with a somewhat different argument using the Carleman estimates inspired by \cite{LR} and \cite{JL}. Similar results in terms of the $L^\infty$ norm can be obtained by using three spheres inequality repetitively from frequency function, see \cite{ARRV}.
Such results  play an important role not only in characterizing the doubling index in a cube in \cite{Lo}, but also in inverse problems. Using the Carleman estimates, we are able to show a two half-ball and one lower dimensional ball type result.
\begin{lemma}
Let $(u, v)$ be a solution of (\ref{system1}) in the half-ball $\mathbb B^+_1$. Denote
$$\tilde{\Gamma}=\{ (x', \ 0)\in \mathbb R^n| x'\in \mathbb R^{n-1}, \ |x'|<\frac{1}{3}\}.$$
Assume that
\begin{equation}
\|(u, v))\|_{H^1(\tilde {\Gamma})}+\|\partial_n{(u, v)}\|_{L^2(\tilde{\Gamma})}\leq \epsilon <<1
\end{equation}
and $\|(u, v)\|_{L^2(\mathbb B^+_\frac{1}{2})}\leq 1$. There exist positive constants $C$ and $\beta$ such that
\begin{equation}
\|(u, v)\|_{L^2(\frac{1}{256}\mathbb B^+_1)}\leq C \epsilon^\beta.
\label{too}
\end{equation}
More precisely, we can show that there exists $0<\gamma<1$ such that
\begin{equation}
\|(u, v)\|_{L^2(\frac{1}{256}\mathbb B^+_1)}\leq \|(u, v)\|_{L^2(\mathbb B^+_\frac{1}{2})}^\gamma \big( \|(u, v)\|_{H^1(\tilde {\Gamma})}+\|\partial_n{(u, v)}\|_{L^2(\tilde{\Gamma})}  \big)^{1-\gamma}.
\label{tool}
\end{equation}
\label{halfspace}
\end{lemma}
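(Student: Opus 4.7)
The plan is to use an $L^2$ Carleman estimate with a weight peaked at an auxiliary center $x_0^*=(0',-\delta)$ lying just below the Cauchy surface, applied to a spatial cutoff of the solution, and then to optimize the large parameter $\tau$ so that three competing quantities --- the target $L^2$-mass on the small half-ball, the commutator error produced by the cutoff, and the Cauchy data on $\tilde{\Gamma}$ --- balance. The system structure $\triangle u = v$, $\triangle v = \bar W u$ with $\|\bar W\|_{L^\infty}\le 1$ poses no essential obstruction: we apply the vector-valued Carleman estimate (\ref{vec}) and absorb the $\triangle(u,v)$ terms into the leading term of the left-hand side once $\tau$ is larger than an absolute constant.

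First, I set $r_*(x) = |x - x_0^*|$ with $\delta>0$ small and choose $\phi(x) = -g(\ln r_*(x))$, $g(t) = t - e^{\varepsilon t}$, the same Hörmander-type weight as in the proof of Lemma \ref{lemma1} but recentered at the exterior point $x_0^*$. The key feature of placing the pole outside the half-ball is that on $\tilde{\Gamma}$ the inner normal derivative $\partial_n \phi$ is strictly positive, so after integration by parts the boundary contribution on $\tilde{\Gamma}$ lands on the right-hand side carrying the natural weights $|f|^2$, $|\nabla_{x'} f|^2$, $|\partial_n f|^2$. The level sets $\{r_*=\rho\}$ foliate the half-ball by spherical caps; the innermost caps lie in a thin neighborhood of $\tilde{\Gamma}$, the outermost reach $\partial\mathbb{B}_{1/2}^+$, and $\phi$ decreases strictly along this foliation.

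Next, pick a cutoff $\chi \in C_c^\infty(\mathbb{R}^n)$ equal to $1$ on a neighborhood of $\tfrac{1}{256}\mathbb{B}_1^+$ and supported in a slightly larger fixed set $K\subset \tfrac{1}{64}\mathbb{B}_1^+ \cup \tilde{\Gamma}$, chosen so that $\phi$ is at least $\phi_1$ on $\tfrac{1}{256}\mathbb{B}_1^+$, at most $\phi_2<\phi_1$ on $\supp\nabla\chi$, and at most $\phi_3$ on $\tilde{\Gamma}\cap \supp\chi$ (necessarily $\phi_3>\phi_1$ since $x_0^*$ is closest to the Cauchy surface). Applying (\ref{vec}) to $\chi(u,v)$ and expanding $\triangle(\chi U) = \chi\triangle U + 2\nabla\chi\cdot\nabla U + (\triangle\chi)U$, the diagonal term $\chi\triangle U$ is controlled by $C|\chi U|$ via (\ref{system1}) and absorbed into the left-hand side. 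The remainder of the right-hand side splits into a commutator piece supported on $\supp\nabla\chi$ (bounded after Caccioppoli by $Ce^{2\tau\phi_2}\|(u,v)\|_{L^2(\mathbb{B}_{1/2}^+)}^2$) and a Cauchy-data piece on $\tilde{\Gamma}$ (bounded by $Ce^{2\tau\phi_3}\epsilon^2$ using the $H^1(\tilde{\Gamma})$ and $L^2(\tilde{\Gamma})$ hypotheses on $(u,v)$ and $\partial_n(u,v)$). Because $\chi\equiv 1$ on $\tfrac{1}{256}\mathbb{B}_1^+$, the left-hand side is at least $\tau e^{2\tau\phi_1}\|(u,v)\|_{L^2(\tfrac{1}{256}\mathbb{B}_1^+)}^2$. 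Rearranging and taking square roots gives
\[
\|(u,v)\|_{L^2(\tfrac{1}{256}\mathbb{B}_1^+)} \le C e^{-\tau(\phi_1-\phi_2)}\|(u,v)\|_{L^2(\mathbb{B}_{1/2}^+)} + C e^{\tau(\phi_3-\phi_1)}\epsilon
\]
for every sufficiently large $\tau$. Equalizing the two right-hand terms produces the interpolation bound $C\|(u,v)\|_{L^2(\mathbb{B}_{1/2}^+)}^{\gamma}\epsilon^{1-\gamma}$ with $\gamma = (\phi_3-\phi_1)/(\phi_3-\phi_2)\in(0,1)$, which is exactly (\ref{tool}); under the normalization $\|(u,v)\|_{L^2(\mathbb{B}_{1/2}^+)}\le 1$ this collapses to (\ref{too}) with $\beta = 1-\gamma$.

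The main obstacle is verifying the positivity of the boundary quadratic form on $\tilde{\Gamma}$ with the correct weights on $|f|^2$, $|\nabla_{x'}f|^2$, $|\partial_n f|^2$ when integrating the Carleman identity by parts --- this is the Hörmander sub-elliptic/pseudoconvexity condition on the boundary $\{x_n=0\}$ for the weight $\phi$. Placing $x_0^*$ on the outer side of $\tilde{\Gamma}$ gives $\partial_n\phi|_{x_n=0}>0$, which supplies the correct sign, but showing that the full boundary symbol is a nonnegative quadratic form in $(f,\nabla_{x'}f,\partial_n f)$ --- so that all three Cauchy-data quantities appearing in the hypothesis can be used to bound it --- is the technical heart of the argument. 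Once this geometric estimate on $\phi$ is in hand, the remainder is bookkeeping: Caccioppoli on the commutator annulus, the trace-type control assumed on $\tilde{\Gamma}$, and the single-parameter optimization in $\tau$.
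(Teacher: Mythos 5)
Your overall strategy—a Carleman estimate in the half-ball whose weight drops off towards the far boundary while picking up controlled Cauchy data on $\tilde\Gamma$, followed by optimizing in $\tau$—is the same skeleton the paper uses, and the two-term inequality you reach and the exponent $\gamma=(\phi_3-\phi_1)/(\phi_3-\phi_2)$ are computed correctly. But there is a concrete gap in the way you invoke the Carleman estimate, and a substantive difference from what the paper actually proves.

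The estimate (\ref{vec}) you cite is stated for $F\in C^\infty_0(\mathbb B_{R_0}(x_0)\setminus\mathbb B_\delta(x_0))$, i.e.\ functions compactly supported in an annulus away from the pole \emph{and} away from any boundary; it contains no boundary integrals at all. Your cutoff $\chi U$ is supported in a set that meets $\tilde\Gamma$, so it is \emph{not} in the admissible class, and (\ref{vec}) cannot be applied to it. The boundary contributions that would appear are exactly what encodes the Cauchy data, and since (\ref{vec}) has none, your inequality cannot come from it. You acknowledge this when you say that verifying the sign of the boundary quadratic form is ``the technical heart of the argument,'' but you do not supply that estimate---so the proof as written has a hole at its single load-bearing step. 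Recentering a singular Hörmander weight at an exterior pole $x_0^*$ and checking pseudoconvexity at $\{x_n=0\}$ is a legitimate route (in the Lebeau--Robbiano spirit), but it is a new Carleman inequality with boundary terms that needs to be proven from scratch, not a consequence of (\ref{vec}).

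The paper takes a more elementary route precisely to avoid this: it uses a \emph{polynomial} weight $\phi(x)=-|x'|^2/4+x_n^2/2-x_n$ (which plays the role of your ``exterior pole'' through the $-x_n$ term making $\partial_n\phi<0$ at $x_n=0$, i.e.\ $\phi$ largest at the Cauchy face) and derives the half-ball Carleman estimate with explicit boundary terms by a direct symmetric/antisymmetric decomposition and integration by parts. That computation is exactly the verification you defer, and the quadratic nature of the weight keeps the boundary bookkeeping manageable; it is also what produces the specific radius $\tfrac1{256}$ via the level sets of $\phi$ (with a general Hörmander weight you would get some radius, not necessarily this one, though that is cosmetic). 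Two smaller remarks: the ``diagonal term $\chi\triangle U$ absorbed into the left-hand side'' should instead be handled through the system (\ref{system1}) so that $\triangle U$ is replaced by lower-order terms in $U$ before absorption (the paper does this by substituting $\triangle(\eta U)=\eta\,\bar V U+[\triangle,\eta]U$ directly); and your optimization needs the usual case split for $\tau_0\le\bar C$ (when $B_2\gtrsim B_1$ the interpolation inequality is trivial), which the paper includes.
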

\begin{proof}
Our tools are some Carleman estimates in the half ball $\mathbb B^+_1=\{x\in \mathbb R^n| x\in \mathbb B_1 \ \mbox{and} \ x_n\geq 0\}$. For simplicity, we first establish such Carleman estimates for scalar functions.
We select a weight function $$\phi(x)=-\frac{|x'|^2}{4}+\frac{x_n^2}{2}-x_n,$$ where $x'=\{ x_1, x_2, \cdots, x_{n-1}\}.$
We consider $\phi(x)$ for $|x|$ in $\mathbb B^+_{\frac{1}{4}}$.

 Define $$\triangle_\tau g = e^{\tau\phi} \triangle (e^{-\tau\phi} g)$$
for $g\in C^\infty_0(\mathbb B^+_1)$.
Direct computations show that
$$ \triangle_\tau g=\triangle g-2\tau \nabla \phi\cdot \nabla g-\tau\triangle \phi g+\tau^2 |\nabla \phi|^2 g.  $$
We split $\triangle_\tau g$ into symmetric parts and anti-symmetric parts:
$$S_{\phi} g=\triangle g+\tau^2|\nabla \phi|^2 g,   $$
$$A_{\phi} g=-2\tau\nabla \phi \cdot \nabla g -\tau \triangle \phi g.   $$
Then
\begin{equation}
\|\triangle_\tau g\|^2= \|S_{\phi} g\|^2+\| A_{\phi} g\|^2+ 2\langle S_{\phi} g, \ A_{\phi} g\rangle.
\label{kuma}
\end{equation}
We study the inner product term $\langle S_{\phi} g, \ A_{\phi} g\rangle$.
Note that
$$\nabla \phi= \langle -\frac{x'}{2}, \ x_n-1\rangle, \quad \triangle \phi=\frac{-n+3}{2}.   $$
We can check that
\begin{align}
\langle S_{\phi} g, \ A_{\phi} g\rangle&=\langle \triangle g+\tau^2|\nabla \phi|^2 g, \ -2\tau\nabla \phi \cdot \nabla g -\tau \triangle \phi g\rangle \nonumber\\
&=\langle \triangle g+\tau^2 \frac{|x'|^2}{4}g+ \tau^2(1-x_n)^2 g, \quad 2\tau(1-x_n)\frac{\partial g}{\partial x_n}+\tau x' \cdot \nabla' g+\frac{n-3}{2}\tau g \rangle,
\label{nogain}
\end{align}
where $\nabla'g= \langle \frac{\partial g}{\partial x_1}, \cdots, \frac{\partial g}{\partial x_{n-1}}\rangle$. We estimate each term in the inner product using integration by parts argument. Integrating by parts twice shows that
\begin{align}
\langle \triangle g, \ 2\tau(1-x_n)\frac{\partial g}{\partial x_n}\rangle&=2\tau \int_{\mathbb B^+_r} (\frac{\partial g}{\partial x_n})^2\,dx-
\tau\int_{\mathbb B^+_r}(1-x_n)\frac{\partial}{\partial x_n}|\nabla g|^2\,dx+ 2\tau \int_{\bf B_r} (\frac{\partial g}{\partial x_n})^2 \,dx' \nonumber\\
&=2\tau \int_{\mathbb B^+_r} (\frac{\partial g}{\partial x_n})^2\,dx-\tau\int_{\mathbb B^+_r}|\nabla g|^2\,dx -\tau \int_{\bf B_r} |\nabla g|^2\,dx'\nonumber\\ &+2\tau \int_{\bf B_r} (\frac{\partial g}{\partial x_n})^2\,dx',
\label{tan1}
\end{align}
where ${\bf B_r}$ is the ball centered at origin with radius $r$ in $\mathbb R^{n-1}$. It follows from integration by parts that
\begin{align}
\langle \triangle g, \ \tau x'\cdot\nabla' g \rangle&=-\tau\int_{\mathbb B^+_r} |\nabla' g|^2\,dx-\tau\int_{\mathbb B^+_r} \sum^{n-1}_{i,j=1} x_j \frac{\partial^2 g}{\partial x_i \partial x_j} \frac{\partial g}{\partial x_i}\, dx \nonumber \\ &-\tau \int_{\mathbb B^+_r}\sum^{n-1}_{i=1} x_i \frac{\partial^2 g}{\partial x_i \partial x_n} \frac{\partial g}{\partial x_n}\, dx+\tau \int_{\bf B_r} \frac{\partial g}{\partial x_n} x'\cdot \nabla' g\, dx'.\label{mis}
\end{align}
We consider the second term on the right hand side of last identity using integration by parts,
\begin{align}
-\tau\int_{\mathbb B^+_r} \sum^{n-1}_{i,j=1} x_j \frac{\partial^2 g}{\partial x_i \partial x_j} \frac{\partial g}{\partial x_i}\, dx&=-\frac{\tau}{2}
\int_{\mathbb B^+_r} x'\cdot \nabla'|\nabla' g|^2\, dx \nonumber \\
&=\frac{(n-1)\tau}{2}
\int_{\mathbb B^+_r} |\nabla' g|^2\, dx.
\end{align}
Applying the similar strategy to the following integral gives that
\begin{align}
-\tau \int_{\mathbb B^+_r}\sum^{n-1}_{i=1} x_i \frac{\partial^2 g}{\partial x_i \partial x_n} \frac{\partial g}{\partial x_n}\, dx=\frac{(n-1)\tau}{2} \int_{\mathbb B^+_r}|\frac{\partial g}{\partial x_n}|^2\, dx.
\label{mis1}
\end{align}
Combining (\ref{mis})--(\ref{mis1}) leads to
\begin{align}
\langle \triangle g, \ \tau x'\cdot\nabla' g \rangle=-\tau \int_{\mathbb B^+_r} |\nabla' g|^2\, dx+ \frac{(n-1)\tau}{2}
\int_{\mathbb B^+_r} |\nabla g|^2 \, dx+\tau  \int_{\bf B_r} \frac{\partial g}{\partial x_n} x'\cdot \nabla' g\, dx'.
\label{Omy}
\end{align}

Taking (\ref{tan1}) and (\ref{Omy}) into consideration yields that
\begin{align}
\langle \triangle g,  -2\tau\nabla \phi\cdot \nabla g\rangle &= \frac{(n-3)\tau}{2}
\int_{\mathbb B^+_r} |\nabla g|^2 \, dx-\tau \int_{\mathbb B^+_r} |\nabla' g|^2 \, dx+2\tau \int_{\mathbb B^+_r} |\frac{\partial g}{\partial x_n} |^2 \, dx \nonumber \\
&+\tau  \int_{\bf B_r} \frac{\partial g}{\partial x_n} x'\cdot \nabla' g\, dx'-\tau \int_{\bf B_r} |\nabla' g|^2 \, dx'+\tau \int_{\bf B_r} |\frac{\partial g}{\partial x_n}|^2 \, dx'.
\label{ren}
\end{align}

We proceed to consider other terms in (\ref{nogain}).
Integration by parts argument shows that
\begin{align}
\langle \tau^2 \frac{|x'|^2}{4} g, \ 2\tau(1-x_n)\frac{\partial g}{\partial x_n}\rangle =\frac{\tau^3}{4} \int_{\mathbb B^+_r} |x'|^2 g^2\, dx+\frac{\tau^3}{4}\int_{\ \textbf{B}_r}|x'|^2 g^2\, dx' .
\label{fool1}
\end{align}
Furthermore, we get
\begin{align}
\langle \tau^2 \frac{|x'|^2}{4} g, \ \tau x'\cdot \nabla' g \rangle=-\frac{n+1}{8}\tau^3 \int_{\mathbb B^+_r}|x'|^2 g^2 \, dx.
\label{fool2}
\end{align}
\begin{align}
\langle \tau^2 (x_n-1)^2 g, \  \tau x'\cdot \nabla' g\rangle=\frac{-(n-1)}{2}\tau^3 \int_{\mathbb B^+_r} (x_n-1)^2 g^2\, dx.
\label{fool3}
\end{align}

\begin{align}
\langle \tau^2 (x_n-1)^2 g, \ 2\tau(1-x_n) \frac{\partial g}{\partial x_n} \rangle =3\tau^3 \int_{\mathbb B^+_r} (x_n-1)^2 g^2\, dx +\tau^3 \int_{\ \textbf{B}_r}  g^2\, dx'.
\label{fool4}
\end{align}

Together with the estimates (\ref{fool1})--(\ref{fool4}), we obtain that
\begin{align}
\langle \tau^2|\nabla \phi|^2 g, \ -2\tau\nabla \phi \cdot \nabla g \rangle&=\frac{1-n}{8}\tau^3 \int_{\mathbb B^+_r} |x'|^2 g^2\, dx+\frac{7-n}{2}\tau^3 \int_{\mathbb B^+_r} (x_n-1)^2 g^2\, dx\nonumber \\&+\tau^3 \int_{\ \textbf{B}_r}(1+\frac{|x'|^2}{4})  g^2\, dx'.
\label{ren2}
\end{align}

It is trivial to see that
\begin{align}
\langle \tau^2 (1-x_n)^2 g, \ \frac{n-3}{2}\tau g\rangle=\frac{n-3}{2}\tau^3 \int_{\mathbb B^+_r} (1-x_n)^2 g^2\,dx
\label{tan3}
\end{align}
and
\begin{align}
\langle \tau^2 \frac{|x'|^2}{4} g, \ \frac{n-3}{2}\tau g\rangle=\frac{n-3}{8}\tau^3 \int_{\mathbb B^+_r} {|x'|^2} g^2\, dx.
\label{ttt}
\end{align}
The combination of (\ref{tan3}) and (\ref{ttt}) yields that
\begin{align}
\langle \tau^2|\nabla \phi|^2 g,\ -\tau\triangle \phi g\rangle=\tau^3\int_{\mathbb B^+_r} (\frac{n-3}{8}{|x'|^2}+\frac{n-3}{2}(x_n-1)^2) g^2\, dx.
\label{ren3}
\end{align}

We are left to deal with the last inner product.
Performing the integration by parts argument shows that
\begin{align}
\langle \triangle g, \ -\tau\triangle\phi g\rangle&=
\langle \triangle g, \ \frac{n-3}{2}\tau g\rangle  \nonumber \\&=-\frac{n-3}{2}\tau \int_{\mathbb B^+_r}|\nabla g|^2 \,dx+\frac{n-3}{2}\tau \int_{\bf B_r} \frac{\partial g}{\partial x_n} g \,dx'.
\label{ren4}
\end{align}

Combining the identities (\ref{nogain}), (\ref{ren}), (\ref{ren2}), (\ref{ren3}) and (\ref{ren4}), we arrive at
\begin{align}
\langle S_{\phi} g, \ A_{\phi} g\rangle&= -\tau \int_{\mathbb B^+_r} |\nabla' g|^2 \, dx+2\tau \int_{\mathbb B^+_r} |\frac{\partial g}{\partial x_n} |^2 \, dx-\frac{1}{4}\tau^3 \int_{\mathbb B^+_r} |x'|^2 g^2\, dx \nonumber \\
&+2 \tau^3 \int_{\mathbb B^+_r} (x_n-1)^2 g^2\, dx+\tau^3 \int_{\bf B_r} (1+\frac{|x'|^2}{4})g^2\,dx'+\frac{n-3}{2}\tau  \int_{\bf B_r} \frac{\partial g}{\partial x_n} g \, dx' \nonumber \\
&+\tau  \int_{\bf B_r} \frac{\partial g}{\partial x_n} x'\cdot \nabla' g\, dx'-\tau \int_{\bf B_r} |\nabla' g|^2 \, dx'+\tau \int_{\bf B_r} |\frac{\partial g}{\partial x_n}|^2 \, dx'.
\end{align}
Since it is assumed that $r<\frac{1}{4}$, simple calculations indicate that
$$ \frac{1}{8}(x_n-1)^2-|x'|^2>0.  $$
By Cauchy-Schwartz inequality, we have

\begin{align}
\langle S_{\phi} g, \ A_{\phi} g\rangle&+C\tau \int_{\bf B_r} |\nabla' g|^2 \, dx'+C\tau \int_{\bf B_r} | g|^2 \, dx'+C\tau \int_{\bf B_r} |\frac{\partial g}{\partial x_n} |^2 \, dx'\nonumber \\
&\geq -\tau \int_{\mathbb B^+_r} |\nabla' g|^2 \, dx+2\tau \int_{\mathbb B^+_r} |\frac{\partial g}{\partial x_n} |^2 \, dx+\frac{63}{32} \tau^3 \int_{\mathbb B^+_r} (x_n-1)^2 g^2 \, dx+\tau^3 \int_{\bf B_r} g^2 \, dx'.
\label{anot}
\end{align}

We also want to include the gradient term in the Carleman estimates. To this end, we compute the following inner product with some small constant $\epsilon>0$ to be determined,
\begin{align}
\langle S_{\phi} g, \ -\frac{16(1+\epsilon)^2}{9}\tau(1-x_n)^2 g\rangle&= \langle \triangle g+\frac{|x'|^2}{4}\tau^2 g+ \tau^2(1-x_n)^2g, \ -\frac{16(1+\epsilon)^2}{9}\tau(1-x_n)^2 g\rangle
 \nonumber\\
&=\frac{16(1+\epsilon)^2}{9}\big(\tau \int_{\mathbb B^+_r}(1-x_n)^2|\nabla g|^2\,dx-2\tau \int_{\mathbb B^+_r} (1-x_n)\frac{\partial g}{\partial x_n} g\,dx\nonumber\\ & -\tau  \int_{\bf B_r} \frac{\partial g}{\partial x_n} g \, dx'
-\frac{\tau^3}{4}\int_{\mathbb B^+_r}|x'|^2(1-x_n)^2 g^2 \nonumber\\ &-\tau^3 \int_{\mathbb B^+_r}(1-x_n)^4 g^2 \, dx\big).
\end{align}
Thus, for $r<\frac{1}{4}$,
\begin{align}
\|S_{\phi} g\|^2+\|\frac{8(1+\epsilon)^2}{9}{\tau (1-x_n)^2 g}\|^2&\geq \langle S_{\phi} g, \ -\frac{16(1+\epsilon)^2}{9}\tau(1-x_n)^2 g\rangle\nonumber\\
&\geq (1+\epsilon)^2\tau \int_{\mathbb B^+_r}|\nabla g|^2\,dx-\frac{32(1+\epsilon)^2}{9}\tau \int_{\mathbb B^+_r} (1-x_n)\frac{\partial g}{\partial x_n} g\,dx \nonumber\\
&  -\frac{16(1+\epsilon)^2}{9}\tau  \int_{\bf B_r} \frac{\partial g}{\partial x_n} g \, dx'-\frac{\tau^3(1+\epsilon)^2}{9} \int_{\mathbb B^+_r}(1-x_n)^2 g^2\, dx \nonumber\\
& -\frac{16(1+\epsilon)^2 \tau^3}{9} \int_{\mathbb B^+_r}(1-x_n)^2 g^2 \, dx.
\label{mabi}
\end{align}
We choose  $\epsilon$ so small that
  \begin{align}
\frac{63}{32}- \frac{ (1+\epsilon)^2}{9}-\frac{16(1+\epsilon)^2}{9}>0.
  \end{align}
Combining the estimates (\ref{kuma}), (\ref{anot}), (\ref{mabi}),
  and using Cauchy-Schwartz inequality and  the fact that $r<\frac{1}{4}$, we get that
\begin{align}
&\|\triangle_\tau g\|^2 +C\tau \int_{\bf B_r} |\nabla' g|^2 \, dx'+C\tau \int_{\bf B_r} | g|^2 \, dx'+C\tau \int_{\bf B_r} |\frac{\partial g}{\partial x_n} |^2 \, dx' \nonumber\\
&+ \| \frac{8(1+\epsilon)^2}{9}\tau (1-x_n)^2 g\|^2  \nonumber\\ & \geq C \tau\int_{\mathbb B^+_r}(1-x_n)^2|\nabla g|^2 \, dx
+C\tau^3 \int_{\mathbb B^+_r} (1-x_n)^2 g^2\, dx+C\tau^3 \int_{\bf B_r} g^2 \, dx'
\label{barc}
\end{align}
for $\tau>\bar C$, where $\bar C$ depends only on $n$.
Since $\tau$ is a large constant, we can absorb the  fifth term on the left hand side of last inequality into the left hand side. Therefore, we get
\begin{align}
&\|\triangle_\tau g\|^2+ C\tau \int_{\bf B_r} |\nabla' g|^2 \, dx'+C\tau \int_{\bf B_r} (\frac{\partial g}{\partial x_n})^2\, dx'+ C\tau \int_{\bf B_r} g^2\, dx' \nonumber\\
&\geq C\tau\int_{\mathbb B^+_r}(1-x_n)^2|\nabla g|^2 \, dx +C\tau^3 \int_{\mathbb B^+_r} (1-x_n)^2 g^2\, dx. \
+C\tau^3 \int_{\bf B_r} g^2 \, dx'.
\label{endd}
\end{align}
Let $f= e^{-\tau \phi} g$.  The inequality (\ref{endd}) implies the desirable estimates
\begin{align}
&\| e^{\tau \phi} \triangle f\|_{L^2(\mathbb B_r^+)}+\tau^{\frac{1}{2}} \| e^{\tau \phi}  f  \|_{L^2(\bf B_r)}+
\tau^{\frac{1}{2}} \| e^{\tau \phi} \frac{\partial f}{\partial x_n} \|_{L^2(\bf B_r)}+ \tau^{\frac{1}{2}} \| e^{\tau \phi} \nabla' f \|_{L^2(\bf B_r)} \nonumber\\
&\geq C\tau^{\frac{3}{2}} \| e^{\tau \phi}(1-x_n) f  \|_{L^2(\mathbb B_r^+)}+C\tau^{\frac{1}{2}}\| e^{\tau \phi}(1-x_n)\nabla f  \|_{L^2(\mathbb B_r^+)}.
\end{align}
By the similar argument, it also holds for a vector function $F=(f_1, \ f_2)$. That is,
\begin{align}
&\| e^{\tau \phi} \triangle F\|_{L^2(\mathbb B_r^+)}+\tau^{\frac{1}{2}} \| e^{\tau \phi}  F \|_{L^2(\bf B_r)}+
\tau^{\frac{1}{2}} \| e^{\tau \phi} \frac{\partial F}{\partial x_n} \|_{L^2(\bf B_r)}+ \tau^{\frac{1}{2}} \| e^{\tau \phi} \nabla' F \|_{L^2(\bf B_r)} \nonumber\\
&\geq C\tau^{\frac{3}{2}} \| e^{\tau \phi}(1-x_n) F \|_{L^2(\mathbb B_r^+)}+C\tau^{\frac{1}{2}}\| e^{\tau \phi}(1-x_n)\nabla F  \|_{L^2(\mathbb B_r^+)}.
\label{usee}
\end{align}
 The following Caccioppolli inequality holds for the solutions of (\ref{system1}) in $\mathbb B^+_1$,
\begin{align}
 \|\nabla U\|_{L^2(\mathbb B_r^+)} \leq \frac{C}{r}\big ( \| U\|_{L^2(\mathbb B_{2r}^+)}+ \|\frac{\partial U}{\partial x_n}\|_{L^2({\bf B}_{2r})}+ \|{ U}\|_{L^2({\bf B}_{2r})}\big).
 \label{cacciop}
\end{align}

Let $\bar V(x)= \begin{pmatrix}
0, & 1 \\
\bar W(x), & 0
\end{pmatrix}
$. We select a smooth cut-off function $\eta$ such that $\eta(x)=1$ in $\mathbb B_{\frac{1}{8}}^+$ and $\eta(x)=0$ outside $\mathbb B_{\frac{1}{4}}^+$. Let $U=(u, v)^\intercal$. Substituting $F$ by $\eta U$ in the Carleman estimates (\ref{usee}) and then  the system (\ref{system1}) yields that
\begin{align}
&\| e^{\tau \phi}(\triangle \eta U+2\nabla \eta\cdot \nabla U )\|_{L^2(\mathbb B_\frac{1}{2}^+)}+\tau^{\frac{1}{2}} \| e^{\tau \phi}  \eta U \|_{L^2({\bf B}_\frac{1}{2})}+
\tau^{\frac{1}{2}} \| e^{\tau \phi} \frac{\partial (\eta U) }{\partial x_n} \|_{L^2({\bf B}_\frac{1}{2})} \nonumber\\ \quad &+ \tau^{\frac{1}{2}} \| e^{\tau \phi} \nabla' (\eta U)\|_{L^2({\bf B}_\frac{1}{2})}\nonumber\\
&\geq C\tau^{\frac{3}{2}} \| e^{\tau \phi}(1-x_n) \eta U \|_{L^2(\mathbb B_\frac{1}{2}^+)}.
\label{meimei}
\end{align}

We want to find the maximum of $\phi$ in the first term on the left hand side of (\ref{meimei}). Since $\phi$ is negative and decreasing with respect to $x'$ and $x_n$ for $r<\frac{1}{4}$, then
$$ \max_{\{\frac{1}{8}\leq r\leq \frac{1}{4} \}\cap \{x_n\geq 0\}}\phi =\max_{\{\frac{1}{8}\leq r\leq \frac{1}{4} \}} -\frac{|x'|^2}{4}=-\frac{1}{256}.$$
We also need to find a lower bound of $\phi$ for the term on the right hand side of (\ref{meimei}) such that $-\phi(x)<\frac{1}{256}$. Let
$$\hat{\phi}(a)=-\frac{a^2}{4}+\frac{a^2}{2}-a=\frac{a^2}{4}-a.  $$
Since $\phi$ decreases with respect to $x'$ and $x_n$, then the minimum of $\phi(x)$ is $\hat{\phi}(a)$ for $r<a$. Solving the inequality $-\hat{\phi}(a)<\frac{1}{256}$, we have one solution $a=\frac{1}{256}$.
Set $$\phi_0=\frac{1}{256}+\hat{\phi}(\frac{1}{256})>0,$$
$$\phi_1=\hat{\phi}(\frac{1}{256})<0.$$

Applying the Caccioppolli inequality (\ref{cacciop}), we arrive at
\begin{align}
e^{-\frac{\tau}{256}  } \|U\|_{L^2(\mathbb B_\frac{1}{2}^+)}+ \|U\|_{L^2({\bf B}_\frac{1}{3})}&+\|\nabla' U\|_{L^2({\bf B}_\frac{1}{3})}+\|\frac{\partial U}{\partial x_n}\|_{L^2({\bf B}_\frac{1}{3})} \nonumber\\
&\geq C\tau \| e^{\tau \phi}(1-x_n)\eta U  \|_{L^2(\mathbb B_\frac{1}{4}^+)}\nonumber\\
&\geq  C\tau e^{\tau  \hat{\phi}(\frac{1}{256})}\|U\|_{L^2(\mathbb B_{\frac{1}{256}}^+)}.
\end{align}
Let
$$   B_1=\|U\|_{L^2(\mathbb B_\frac{1}{2}^+)}, $$
$$  B_2=  \|U\|_{L^2({\bf B}_\frac{1}{3})}+\|\nabla' U\|_{L^2({\bf B}_\frac{1}{3})}+\|\frac{\partial U}{\partial x_n}\|_{L^2({\bf B}_\frac{1}{3})}, $$
$$ B_3=\|U\|_{L^2(\mathbb B_{\frac{1}{256}}^+)}. $$
Multiplying both sides of the last inequality by $e^{-\tau  \hat{\phi}(\frac{1}{256})}$ leads to
\begin{equation}
e^{-\tau  \phi_0  }B_1+ e^{-\tau  \phi_1  }B_2\geq C B_3.
\label{BBB}
\end{equation}
We introduce a parameter
$$\tau_0=\frac{\ln \frac{B_2}{B_1}}{\phi_1-\phi_0}.$$
If $\tau_0>\bar C$, where $\bar C$ is given for the validity of the estimates (\ref{barc}),  then we choose $\tau=\tau_0$ in (\ref{BBB}). Thus,
\begin{equation}
B_1^{\frac{\phi_1}{\phi_1-\phi_0}} B_2^{\frac{-\phi_0}{\phi_1-\phi_0}}\geq CB_3.
\end{equation}
Let $\gamma=\frac{\phi_1}{\phi_1-\phi_0}$. Then the following three balls type inequality follows
\begin{equation}
\|(u, v)\|_{L^2(\frac{1}{256}\mathbb B^+_1)}\leq \|(u, v)\|_{L^2(\mathbb B^+_\frac{1}{2})}^\gamma \big( \|(u, v)\|_{H^1(\tilde {\Gamma})}+\|\partial_n{(u, v)}\|_{L^2(\tilde{\Gamma})}  \big)^{1-\gamma}.
\label{nice}
\end{equation}
If $\tau_0\leq \bar C$, since $\phi_1-\phi_0$ is negative, then $B_2\geq C B_1$. It is clear that $B_3\leq B_1$. Again, we arrive at
$$B_3\leq C B_1^\gamma B_2^{1-\gamma}.  $$
Therefore, we show the estimates (\ref{nice}) again.  The estimate (\ref{too}) is a consequence of (\ref{nice}). The lemma is finished.
\end{proof}

\section{Appendix}
The following lemma serves as the starting step for the iteration argument in the proof of Proposition \ref{pro2}. For the scalar equations, the finite bound of $F(N)$ is established in \cite{HS}. See also a different proof using compactness arguments in \cite{HL1}. Instead of pursuing the nodal sets comparison lemma in \cite{HS}, we adapt the proof of Theorem \ref{th4} and the measure of rank zero sets of harmonic maps in  \cite{HL2} where elliptic systems are considered. We give the main ideas in the proof of the lemma.
\begin{lemma}
Let $(u,v)$ be the solution in (\ref{system1}) and $F(N)$ be defined in (\ref{defnew}). Then $F(N)<C(N)$.
\label{bounf}
\end{lemma}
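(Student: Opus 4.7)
The plan is to transfer the compactness-plus-iteration scheme that underlies Theorem \ref{th4} to the elliptic system (\ref{system1}) and its mixed nodal set $\{u=v=0\}$. The family
\[\mathcal{F}_N=\{(u,v)\text{ solving (\ref{system1}) with }N_{(u,v)}(Q)\leq N,\ \|(u,v)\|_{L^2(Q)}=1\}\]
is precompact in the local $C^{3,\alpha}$ topology by elliptic regularity applied to the system, which is the analogue of the compactness of $D_N^1$ used in Lemma \ref{le13}. Fix any $(u_0,v_0)\in\mathcal{F}_N$. Since $v_0=\triangle u_0$, we have $\{u_0=v_0=0\}=\{u_0=\triangle u_0=0\}\subset\{u_0=0\}$, and Lemma \ref{haudim} applied to the bi-Laplace operator (which belongs to $\mathcal{L}(\Lambda,K)$) shows $\{u_0=0\}$ is countably $(n-1)$-rectifiable; at $H^{n-1}$-almost every point $y$ of the top stratum, after a rotation $u_0(y+x)=cx_1^d+\psi(x)$ with $1\leq d\leq 3$ and $\psi$ satisfying (\ref{psi1})--(\ref{psi2}), and the leading polynomial of $v_0=\triangle u_0$ is then $cd(d-1)x_1^{d-2}$ (or identically nonzero for $d\leq 1$, in which case there is no mixed zero near $y$).

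Next I would establish the systems analogue of Proposition \ref{prohau}: for each $(u_0,v_0)\in\mathcal{F}_N$ there exist positive constants $C(u_0,v_0)$, $\epsilon(u_0,v_0)$, and a finite collection of balls $\{\mathbb B_{r_i}(x_i)\}$ with $x_i\in\{u_0=v_0=0\}$ and $\sum r_i^{n-1}\leq 1/2^n$, such that any pair $(u,v)$ solving (\ref{system1}) with $\|(u,v)-(u_0,v_0)\|_{C^3}<\epsilon(u_0,v_0)$ satisfies
\[H^{n-1}\big(\{u=v=0\}\cap\mathbb B_{1/2}\setminus\textstyle\bigcup\mathbb B_{r_i}(x_i)\big)<C(u_0,v_0).\]
Near each top-stratum point the perturbed $u$ still admits a coordinate direction $e_1$ along which $\partial_{e_1}^d u\neq 0$, so every line parallel to $e_1$ meets $\{u=0\}\cap\mathbb B_r$ in at most $d+1$ points; Federer's integral-geometric formula 3.2.22 from \cite{F} then yields $H^{n-1}(\{u=v=0\}\cap\mathbb B_r)\leq H^{n-1}(\{u=0\}\cap\mathbb B_r)\leq Cr^{n-1}$. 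The lower-dimensional strata are covered by finitely many of the small balls $\mathbb B_{r_i}(x_i)$. Extracting a finite $\epsilon$-net from the compact class $\mathcal{F}_N$ produces a uniform constant $C(N)$, and the Hardt--Simon iteration used in the proof of Theorem \ref{th4} carries over verbatim to yield $F(N)\leq C(N)<\infty$.

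The principal obstacle is the local structural step: for a scalar equation the leading polynomial at a generic nodal point is a homogeneous polynomial of one variable, whereas here one must analyse the pair of leading polynomials $(P_u,P_v)$ simultaneously. The constraint $v=\triangle u$ rigidly couples their vanishing orders (differing by $2$ at nondegenerate points, so a one-variable $P_u$ yields a one-variable $P_v$), which is what lets the Han--Lin stratification from \cite{Han} survive; nevertheless, the strata on which $u$ vanishes to order $\leq 1$ (where the mixed set is empty) and those on which the leading polynomial is not of one variable (which the harmonic-maps framework of \cite{HL2} places in a lower-dimensional set) demand careful bookkeeping before the scalar compactness scheme can be imported.
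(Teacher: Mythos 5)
Your scheme---compactness of the normalized doubling class plus the Hardt--Simon iteration from Theorem \ref{th4}---is the same as the paper's, but the local structural step goes by a different route. The paper regards $u$ and $v$ as a \emph{pair} of second-order solutions of (\ref{system1}): at $H^{n-1}$-a.e.\ point $y$ of $\{u=v=0\}$ both leading polynomials $P^1_y$ (for $u$) and $P^2_y$ (for $v$) are harmonic, at least one of them is a linear monomial $C x_1$ after rotation, and the bound $H^{n-1}(\{u_1=v_1=0\}\cap\mathbb B_r)\leq Cr^{n-1}$ comes from the inclusion of the mixed set in whichever individual nodal set, $u_1^{-1}(0)$ or $v_1^{-1}(0)$, has the degree-one leading term. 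You instead treat $u$ alone as a solution of the fourth-order operator, apply Lemma \ref{haudim} to produce a monomial $cx_1^d$ with $1\leq d\leq 3$, and use only $\{u=v=0\}\subset\{u=0\}$; the integral-geometric formula still gives $\leq Cr^{n-1}$ with $d+1$ zeros per line. Both routes are sound; the paper's degree-$1$ normalization is marginally cleaner, yours avoids having to discuss $v$ at all. One caveat: your third paragraph is not actually needed for your own argument, and parts of it are wrong. For $d=1$ the Laplacian of $cx_1$ is identically \emph{zero}, so the leading term of $v_0$ is not determined by $P_u$ at all, and it is false that "there is no mixed zero near $y$" when $u$ vanishes to order $1$ --- a point $y$ in the first stratum of $\{u_0=0\}$ lies in the mixed set precisely when the independent condition $v_0(y)=\triangle u_0(y)=0$ also holds. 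Likewise, if $d=2$ your formula would force $v_0(y)=2c\neq 0$, which shows $d=2$ cannot occur at a mixed zero, not that it needs "careful bookkeeping." Since all you use in the estimate is $\{u_1=v_1=0\}\subset\{u_1=0\}$, these observations about $P_v$ can simply be deleted and the proof goes through.
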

\begin{proof}
Following the arguments in \cite{Han}, the set $\{\mathbb B_{1/2}|u=v=0\}$ is countably $(n-1)$-rectifiable.  Let $y$ be in the $n-1$ dimensional nodal set $\{\mathbb B_{1/2}|u=v=0\}$. There exist leading monomials $P_y^1$ and $P_y^2$ such that
\begin{align}
\triangle P_y^1=0,  \quad  \triangle P_y^2=0.
\end{align}
By an appropriate rotation, we can have either
\begin{align}
u(x)=C_1 x_1+ \psi_1(x)
\end{align}
or
\begin{align}
v(x)=C_2 x_1+ \psi_2(x),
\end{align}
where
\begin{align*}
|\psi_i(x)|\leq C|x|^{1+\alpha_i} \quad \mbox{for some} \  0< \alpha_i<1, \quad i=1, 2.
\end{align*}

Following the proof of Proposition \ref{prohau}, we can show that, there exist positive constants $C(u, v)$ and $\epsilon(u, v)$ and a finite collection of balls $\{\mathbb B_{r_i}(x_i)\}$ with $r_i\leq \frac{1}{8}$ and $x_i\in \{\mathbb B_{1/2}|u=v=0\}$ such that, for $(u_1, v_1)\in C^1$ with
\begin{align}
\|(u, v)- (u_1, v_1)\|_{C^1(\mathbb B_1)}\leq \epsilon(u, v),
\label{somefur}
\end{align}
there hold
\begin{align}
H^{n-1}\big(\{u_1=v_1=0\}\cap \mathbb B_{1/2}\backslash \bigcup B_{r_i}(x_i)\big)< C(u, v)
\label{somemore}
\end{align}
and
$$\sum r_i^{n-1} \leq \frac{1}{2^n}.  $$

The key to prove (\ref{somemore}) is to show that
\begin{align}
H^{n-1}\big(\{u_1=v_1=0\}\cap \mathbb B_{r}(y)\big)< C(u, v)r^{n-1} \label{somekey}
\end{align}
under the condition (\ref{somefur}). It follow from (\ref{somefur}) and the arguments in the proof of proposition \ref{th3} that there holds
\begin{align}
H^{n-1}\big(u_1^{-1}(0)\cap \mathbb B_r\big)< C_1(u, v) r^{n-1} \quad \mbox{or} \quad H^{n-1}\big(v_1^{-1}(0)\cap \mathbb B_r\big)< C_2(u, v) r^{n-1}.
\end{align}
Since
\begin{align}
\{u_1=v_1=0\}\cap \mathbb B_{r}(y)\subset u^{-1}_1(0)\cap \mathbb B_{r}(y) \quad \mbox{or} \quad \{u_1=v_1=0\}\cap \mathbb B_{r}(y)\subset v^{-1}_1(0)\cap \mathbb B_{r}(y),
\end{align}
the estimate (\ref{somekey}) follows.
Because of $N_{(u, v)}(Q)\leq N$,  the doubling inequality holds
\begin{equation}
\int_{\mathbb B_{2r}(x_0)} u^2+v^2\,dx\leq e^{CN}  \int_{\mathbb B_{r}(x_0)} u^2+v^2\,dx.
\end{equation}
Following the arguments of the proof of Theorem \ref{th4}, we can show that, there exists $C(N)$ depending on $N$ such that
\begin{align}
H^{n-1}(\mathbb B_{1/2}|u=v=0)\leq C(N).
\end{align}
This completes the proof of the lemma.
\end{proof}

\end{document}